    \newcommand{\thzfc}{\mathrm{ZFC}}
    \newcommand{\Mg}{\mathbf{Mg}}
    \newcommand{\Cn}{\mathbf{Cn}}
    \newcommand{\Iwf}{\mathcal{I}}
    \newcommand{\Jwf}{\mathcal{J}}
    \newcommand{\Mwf}{\mathcal{M}}
    \newcommand{\Nwf}{\mathcal{N}}
    \newcommand{\bfrak}{\mathfrak{b}}
    \newcommand{\cfrak}{\mathfrak{c}}
    \newcommand{\dfrak}{\mathfrak{d}}
    \newcommand{\add}{\mbox{\rm add}}
    \newcommand{\cov}{\mbox{\rm cov}}
    \newcommand{\non}{\mbox{\rm non}}
    \newcommand{\cof}{\mbox{\rm cof}}
    \newcommand{\Bor}{\mathds{B}}
    \newcommand{\Cor}{\mathds{C}}
    \newcommand{\Dor}{\mathds{D}}
    \newcommand{\Eor}{\mathds{E}}
    \newcommand{\LOCor}{\mathds{LOC}}
    \newcommand{\Por}{\mathds{P}}
    \newcommand{\Qor}{\mathds{Q}}
    \newcommand{\Qnm}{\dot{\mathds{Q}}}
    \newcommand{\R}{\mathbb{R}}
    \newcommand{\cf}{\mbox{\rm cf}}
    \newcommand{\imp}{{\ \mbox{$\Rightarrow$} \ }}
    \newcommand{\sii}{{\ \mbox{$\Leftrightarrow$} \ }}
    \newcommand{\la}{\langle}
    \newcommand{\ra}{\rangle}
   \newcommand{\On}{\mathbf{On}}
\newcommand{\Dbf}{\mathbf{D}}
\newcommand{\Rbf}{\mathbf{R}}
\newcommand{\Cbf}{\mathbf{C}}
\newcommand{\Lc}{\mathbf{Lc}}
\newcommand{\Lb}{\mathbf{Lb}}
\newcommand{\Hcal}{\mathcal{H}}
\newcommand{\Scal}{\mathcal{S}}
\newcommand{\id}{\mathrm{id}}
\newcommand{\leqT}{\preceq_{\mathrm{T}}}
\newcommand{\eqT}{\cong_{\mathrm{T}}}
\newcommand{\COB}{\mathrm{COB}}
\newcommand{\LCU}{\mathrm{LCU}}
\newcommand{\EUB}{\mathrm{EUB}}
\newcommand{\baire}{\omega^\omega}
\definecolor{aguamarina}{cmyk}{0.85,0,0.33,0}
\definecolor{cafe}{cmyk}{0,0.81,1,0.60}
\definecolor{canela}{cmyk}{0.14,0.42,0.56,0}
\definecolor{darkgray}{cmyk}{0,0,0,0.50}
\definecolor{emerald}{cmyk}{0.91,0,0.88,0.12}
\definecolor{fresa}{cmyk}{0,1,0.50,0}
\definecolor{gold}{cmyk}{0,0.10,0.84,0}
\definecolor{lightgray}{cmyk}{0,0,0,0.30}
\definecolor{marron}{cmyk}{0,0.72,1,0.45}
\definecolor{melon}{cmyk}{0,0.29,0.84,0}
\definecolor{ladri}{cmyk}{0,0.77,0.87,0}
\definecolor{olive}{cmyk}{0.64,0,0.95,0.40}
\definecolor{orange}{cmyk}{0,0.42,1,0}
\definecolor{peach}{cmyk}{0,0.46,0.50,0}
\definecolor{pink}{cmyk}{0,0.10,0.10,0}
\definecolor{orange}{cmyk}{0,0.42,1,0}
\definecolor{pine}{cmyk}{0.92,0,0.59,0.25}
\definecolor{purple}{cmyk}{0.45,0.86,0,0}
\definecolor{violet}{cmyk}{0.07,0.90,0,0.34}
\newcommand{\ccafe}[1]{{\color{cafe}#1}}
\newcommand{\ccanela}[1]{{\color{canela}#1}}
\newcommand{\cmarron}[1]{{\color{marron}#1}}
\newcommand{\cmelon}[1]{{\color{melon}#1}}
\newcommand{\cladri}[1]{{\color{ladri}#1}}
\newcommand{\colive}[1]{{\color{olive}#1}}
\newcommand{\corange}[1]{{\color{orange}#1}}
\newcommand{\blue}[1]{{\color{blue}#1}}
\newcommand{\cyan}[1]{{\color{cyan}#1}}
\newcommand{\green}[1]{{\color{green}#1}}
\DeclareSymbolFont{extraup}{U}{zavm}{m}{n}
\DeclareMathSymbol{\varheart}{\mathalpha}{extraup}{86}
\DeclareMathSymbol{\vardiamond}{\mathalpha}{extraup}{87}
\definecolor{dodger}{rgb}{0.0,0.5,1.0}
\newcommand{\azul}[1]{{\color{blue}#1}}
\definecolor{amber}{rgb}{1.0,0.49,0.0}
\definecolor{ogreen}{RGB}{107,142,35}
\title{Forcing constellations of Cicho\'n's diagram by using the Tukey order}
\begin{document}

\author[1]{Miguel A.~Cardona}
\author[2]{Diego A.~Mej\'ia}

\date{}

\affil[1]{Institute of Discrete Mathematics and Geometry, TU Wien.}
\affil[2]{Faculty of Sciences, Shizuoka University.}

\newcommand{\Addresses}{{
  \bigskip
  \footnotesize
  
  Institute of Discrete Mathematics and Geometry\\
  TU Wien\\
  Wiedner Hauptstrasse 8–10/104, A-1040 Wien\\
  AUSTRIA\\
  E--mail address: \texttt{miguel.montoya@tuwien.ac.at}
  \bigskip

  Creative Science Course (Mathematics), Faculty of Science\\
  Shizuoka University\\
  836 Ohya, Suruga-ku, Shizuoka 422-8529\\
  JAPAN\\
  E--mail address: \texttt{diego.mejia@shizuoka.ac.jp}
}}

\maketitle

\makeatletter
\def\@roman#1{\romannumeral #1}
\makeatother

\newcounter{enuAlph}
\renewcommand{\theenuAlph}{\Alph{enuAlph}}

\numberwithin{equation}{section}
\renewcommand{\theequation}{\thesection.\arabic{equation}}


\theoremstyle{plain}
  \newtheorem{theorem}[equation]{Theorem}
  \newtheorem{corollary}[equation]{Corollary}
  \newtheorem{lemma}[equation]{Lemma}
  \newtheorem{mainlemma}[equation]{Main Lemma}
  \newtheorem{fact}[equation]{Fact}
  \newtheorem{prop}[equation]{Proposition}
  \newtheorem{claim}[equation]{Claim}
  \newtheorem{question}[equation]{Question}
  \newtheorem{problem}[equation]{Problem}
  \newtheorem{conjecture}[equation]{Conjecture}
  \newtheorem*{theorem*}{Theorem}
  \newtheorem*{mainthm*}{Main Theorem}
  \newtheorem{teorema}[enuAlph]{Theorem}
  \newtheorem*{corollary*}{Corollary}
\theoremstyle{definition}
  \newtheorem{definition}[equation]{Definition}
  \newtheorem{example}[equation]{Example}
  \newtheorem{remark}[equation]{Remark}
  \newtheorem{notation}[equation]{Notation}
  \newtheorem{context}[equation]{Context}
  \newtheorem{observation}[equation]{Observation}
  \newtheorem{assumption}[equation]{Assumption}
  \newtheorem*{definition*}{Definition}
  \newtheorem*{acknowledgements*}{Acknowledgements}

\def\sectionautorefname{Section}
\def\subsectionautorefname{Subsection}


\begin{abstract}
We use known finite support iteration techniques to present various examples of models where several cardinal characteristics of Cicho\'n's diagram are pairwise different. We show some simple examples forcing the left-hand side of Cicho\'n's diagram, and present the technique of restriction to models to force Cicho\'n's maximum (original from Goldstern, Kellner, Shelah and the second author). We focus on how the values forced in all the constellations are obtained via the Tukey order.
\end{abstract}

\makeatother

\section*{Introduction}\label{sec:intro}

Let $\Iwf$ be an ideal of subsets of $X$ such that $\{x\}\in \Iwf$ for all $x\in X$. We define \emph{cardinal characteristics associated with $\Iwf$} by:

\begin{description}
\item[Additivity of $\Iwf$:] $\add(\Iwf)=\min\{|\Jwf|:\,\Jwf\subseteq\Iwf,\,\bigcup\Jwf\notin\Iwf\}$. 

\item[Covering of $\Iwf$:] $\cov(\Iwf)=\min\{|\Jwf|:\,\Jwf\subseteq\Iwf,\,\bigcup\Jwf=X\}$.

\item[Uniformity of $\Iwf$:] $\non(\Iwf)=\min\{|A|:\,A\subseteq X,\,A\notin\Iwf\}$.

\item[Cofinality of $\Iwf$:] $\cof(\Iwf)=\min\{|\Jwf|:\,\Jwf\subseteq\Iwf,\ \forall\, A\in\Iwf\ \exists\, B\in \Jwf\colon A\subseteq B\}$.
\end{description}

\begin{figure}[H]
  \begin{center}
    \includegraphics[scale=1.0]{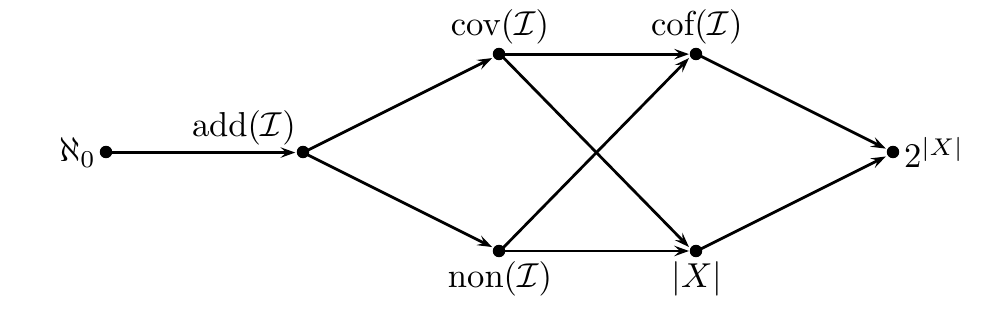}
    \caption{Diagram of the cardinal characteristics associated with $\Iwf$. An arrow  $\mathfrak x\rightarrow\mathfrak y$ means that (provably in ZFC) 
    $\mathfrak x\le\mathfrak y$.}
    \label{diag:idealI}
  \end{center}
\end{figure}

\autoref{diag:idealI} shows the natural inequalities between the cardinal characteristics associated with  $\Iwf$. These cardinals have been studied intensively for $\Mwf$ and $\Nwf$ (see e.g.~\cite{BJ,blass}), which denote the $\sigma$-ideal first category subsets of $\R$ and the $\sigma$-ideal of Lebesgue null subsets of $\R$, respectively. We denote, as usual, $\cfrak:=2^{\aleph_0}=|\R|$, and recall that $\aleph_1$ is the smallest uncountable cardinal. 

For $f,g\in\omega^\omega$ we write 
\[\text{$f\leq^* g$ (which is read \emph{$f$ is dominated by $g$}) iff $\exists\, m\ \forall\, n\geq m\colon f(n)\leq g(n)$.}\] 
In addition, we define
  \begin{itemize}
    \item[{}] The \emph{bounding number} $\bfrak=\min\{|F|:\, F\subseteq \omega^\omega\text{ and }\neg\exists\, y\in \omega^\omega\ \forall\, x\in F\colon x\leq^* y\}$, and
    \item[{}] the \emph{dominating number} $\dfrak=\min\{|D|:\, D\subseteq \omega^\omega\text{ and }\forall\, x\in \omega^\omega\ \exists\, y\in D\colon x\leq^* y\}$.
\end{itemize}  

The relationship between these cardinals is best illustrated by \emph{Cicho\'n's diagram}~(see \autoref{FigCichon}), which is one of the most important diagrams in set theory of the reals and has been a relevant object of study since the decade of the 1980's. It is well-known that this diagram is \emph{complete} in
the sense that no other inequality can be proved between two cardinal characteristics there.
See e.g.~\cite{BJ} for a complete survey about this diagram and its completeness.

\begin{figure}[ht]
\begin{center}
  \includegraphics[scale=1.3]{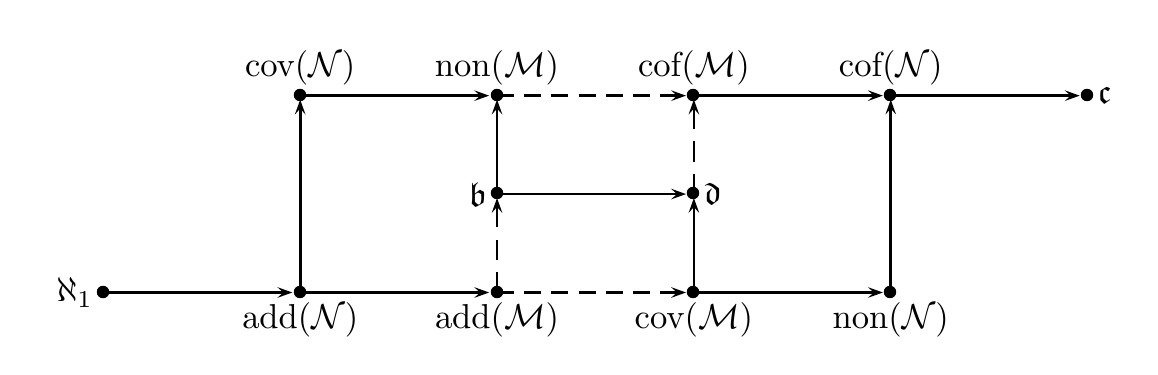}
  \caption{Cicho\'n's diagram. The arrows mean $\leq$ and dotted arrows represent
  $\add(\Mwf)=\min\{\bfrak,\cov(\Mwf)\}$ and $\cof(\Mwf)=\max\{\dfrak,\non(\Mwf)\}$.}
  \label{FigCichon}
\end{center}
\end{figure}
In the context of this diagram, a natural question arises:
\begin{quote}
    Is it consistent that all the cardinals in~\autoref{FigCichon} (with the exception of the dependent values $\add(\Mwf)$ and $\cof(\Mwf)$) are pairwise different?
\end{quote}
It turns out that the answer to this question is positive and was proved by Goldstern, Kellner and Shelah \cite{GKScicmax}, who used four strongly compact cardinals to obtain the consistency of Cicho\'n's diagram divided into 10 different values, situation known as \emph{Cicho\'n's maximum}. In this same direction. This was improved by Brendle and the authors~\cite{BCM} who used only three strongly compact cardinals; finally, Goldstern, Kellner, Shelah and the second author~\cite{GKMS} proved that no large cardinals are needed for the consistency of Cicho\'n's maximum. 

The previously cited work occurs in the context of finite support (FS) iterations of ccc posets. In fact, when calculating the values of the cardinals in Cicho\'n's diagram in generic extensions, Tukey connections appear implicitly. This appears a bit more explicitly in \cite{GKScicmax,GKMS} with the notions of $\COB$ (Cone of bounds) and $\LCU$ (linear cofinally unbounded), but still the full power of the Tukey connections remained unexplored.

To complement this last part, this work summarizes some of the techniques required to force Cicho\'n's maximum, but making the role of the Tukey order very explicit. This allows to reformulate all technical results and main theorems in a very beautiful and concise way.


\section{Relational systems and cardinal characteristics}\label{sec:relsys}

Many cardinal characteristics of the continuum and their relations can be represented by relational systems as follows. This presentation is based on~\cite{Vojtas,BartInv,blass}.

\begin{definition}\label{def:relsys}
We say that $\Rbf=\la X, Y, \sqsubset\ra$ is a \textit{relational system} if it consists of two non-empty sets $X$ and $Y$ and a relation $\sqsubset$.
\begin{enumerate}[label=(\arabic*)]
    \item A set $F\subseteq X$ is \emph{$\Rbf$-bounded} if $\exists\, y\in Y\ \forall\, x\in F\colon x \sqsubset y$. 
    \item A set $E\subseteq Y$ is \emph{$\Rbf$-dominating} if $\forall\, x\in X\ \exists\, y\in E\colon x \sqsubset y$. 
\end{enumerate}

We associate two cardinal characteristics with this relational system $\Rbf$: 
\begin{itemize}
    \item[{}] $\bfrak(\Rbf):=\min\{|F|:\, F\subseteq X  \text{ is }\Rbf\text{-unbounded}\}$ the \emph{unbounding number of $\Rbf$}, and
    
    \item[{}] $\dfrak(\Rbf):=\min\{|D|:\, D\subseteq Y \text{ is } \Rbf\text{-dominating}\}$ the \emph{dominating number of $\Rbf$}.
\end{itemize}
\end{definition}

A very representative general example of relational systems is given by directed preorders.

\begin{definition}\label{examSdir}
We say that $\la S,\leq_S\ra$ is a \emph{directed preorder} if it is a preorder (i.e.\ $\leq_S$ is a reflexive and transitive relation on $S$) such that 
\[\forall\, x, y\in S\ \exists\, z\in S\colon x\leq_S z\text{ and }y\leq_S z.\] 
A directed preorder $\la S,\leq_S\ra$ is seen as the relational system $S=\la S, S,\leq_S\ra$, and their associated cardinal characteristics are denoted by $\bfrak(S)$ and $\dfrak(S)$. The cardinal $\dfrak(S)$ is actually the \emph{cofinality of $S$}, typically denoted by $\cof(S)$ or $\cf(S)$.
\end{definition}

\begin{fact}\label{basicdir}
If a directed preorder $S$ has no maximum element then $\bfrak(S)$ is infinite and regular, and $\bfrak(S)\leq\cf(\dfrak(S))\leq\dfrak(S)\leq|S|$. Even more, if $L$ is a linear order without maximum then $\bfrak(L)=\dfrak(L)=\cof(L)$.
\end{fact}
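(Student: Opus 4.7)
The plan is to verify the four assertions about $\bfrak(S)$ and $\dfrak(S)$ one by one, using only directedness and minimality arguments, and then treat the linear order addendum as a short corollary. The trivial bounds $\cf(\dfrak(S))\leq\dfrak(S)\leq|S|$ are immediate: the first is the general fact that cofinality of a cardinal is at most the cardinal, and the second holds because $S$ itself is dominating in $\la S,S,\leq_S\ra$.

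Infinitude of $\bfrak(S)$ will come from an easy induction: any finite $F\subseteq S$ is bounded, since directedness lets us take an upper bound of two elements and absorb the rest one at a time; combined with the absence of a maximum (so that $S$ is itself unbounded), this gives $\aleph_0\leq\bfrak(S)$. For regularity, I would take an unbounded $F$ with $|F|=\bfrak(S)=:\kappa$ and, toward a contradiction, assume $\cf(\kappa)=\lambda<\kappa$. Write $F=\bigcup_{\alpha<\lambda}F_\alpha$ with $|F_\alpha|<\kappa$; by minimality each $F_\alpha$ has an upper bound $y_\alpha\in S$. Then $\{y_\alpha:\alpha<\lambda\}$ has size $\leq\lambda<\kappa$, so it is bounded by some $z\in S$, and transitivity shows $z$ bounds all of $F$, contradicting $F$ unbounded.

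The main step is $\bfrak(S)\leq\cf(\dfrak(S))$. I would fix a dominating set $D$ with $|D|=\dfrak(S)=:\mu$ and decompose $D=\bigcup_{\alpha<\cf(\mu)}D_\alpha$ with $|D_\alpha|<\mu$. Since each $D_\alpha$ fails to dominate (its size is below $\dfrak(S)$), choose $x_\alpha\in S$ with $x_\alpha\not\leq_S d$ for every $d\in D_\alpha$. Suppose for contradiction that $\bfrak(S)>\cf(\mu)$. Then $\{x_\alpha:\alpha<\cf(\mu)\}$ is bounded by some $y\in S$, and using that $D$ dominates pick $z\in D$ with $y\leq_S z$. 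Taking $\alpha$ with $z\in D_\alpha$ gives $x_\alpha\leq_S y\leq_S z$ by transitivity, contradicting the choice of $x_\alpha$. The only subtle point here is arranging the cardinalities correctly so that every $D_\alpha$ actually fails to dominate; I expect this to be the main (though very mild) obstacle.

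For the linear order case, the key observation is that in a linear order without maximum an unbounded set is automatically cofinal: if $F\subseteq L$ were not cofinal, some $y\in L$ would satisfy $x<y$ for all $x\in F$ by trichotomy, bounding $F$. Hence any witness of size $\bfrak(L)$ is already dominating, which yields $\dfrak(L)\leq\bfrak(L)$, and combined with the general $\bfrak\leq\dfrak$ from the previous step this gives $\bfrak(L)=\dfrak(L)=\cof(L)$.
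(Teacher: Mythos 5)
Your proof is correct and is the standard argument; the paper states this result as a Fact without providing a proof, so there is nothing to compare against beyond noting that your route (directedness to bound finite sets, splitting an unbounded resp.\ dominating family along a cofinal decomposition to get regularity of $\bfrak(S)$ and $\bfrak(S)\leq\cf(\dfrak(S))$, and unbounded-implies-cofinal for linear orders) is exactly the expected one. The only detail worth making explicit is that $\dfrak(S)$ is infinite --- needed for the decomposition $D=\bigcup_{\alpha<\cf(\mu)}D_\alpha$ with $|D_\alpha|<\mu$ --- which holds because a finite dominating set would have an upper bound by directedness, and that bound would be a maximum of $S$.
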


The following list of examples are relevant for the main results of this paper.

\begin{example}\label{exm:Baire}
Consider $\omega^\omega=\la\omega^\omega,\leq^*\ra$, 
which is a directed preorder. The cardinal characteristics $\bfrak:=\bfrak(\omega^\omega)$ and $\dfrak:=\dfrak(\omega^\omega)$ are the well-known \emph{bounding number} and \emph{dominating number}, respectively.
\end{example}

\begin{example}\label{exm:Iwf}
For any ideal $\Iwf$ on $X$, we consider the following relational systems. 
\begin{enumerate}[label=(\arabic*)]
    \item $\Iwf:=\la\Iwf,\subseteq\ra$ is a directed partial order. Note that $\bfrak(\Iwf)=\add(\Iwf)$ and $\dfrak(\Iwf)=\cof(\Iwf)$.
    
    \item $\Cbf_\Iwf:=\la X,\Iwf,\in\ra$. Note that $\bfrak(\Cbf_\Iwf)=\non(\Iwf)$ and $\dfrak(\Cbf_\Iwf)=\cov(\Iwf)$.
\end{enumerate}
\end{example}

\begin{example}\label{ex:ideal<theta}
Let $\theta$ be an infinite cardinal and $X$ a set of size $\geq\theta$. Then $[X]^{<\theta}$ is an ideal. We look at its associated cardinal characteristics. 

Its additivity and uniformity numbers are easy to determine:
\[\add([X]^{<\theta})=\cf(\theta) \text{ and } \non([X]^{<\theta})=\theta.\]

For the covering number, we obtain
    \[\cov([X]^{<\theta})=\left\{
    \begin{array}{ll}
        |X| & \text{if $|X|>\theta$,} \\
        \cf(\theta) & \text{if $|X|=\theta$.}
    \end{array}\right.\]
Therefore $\cov([X]^{<\theta})=|X|$ whenever $\theta$ is regular, which is our case of interest.

The cofinality number is more interesting. Under Shelah's Strong Hypothesis\footnote{The failure of this hypothesis requires large cardinals.} it follows that
\[\cof([X]^{<\theta})=\left\{
    \begin{array}{ll}
        |X| & \text{if $\cf(|X|)\geq\theta$,} \\
        |X|^+ & \text{otherwise.}
    \end{array}\right.\]
In ZFC, we have $\cof([X]^{<\theta})=|X|$ whenever $|X|^{<\theta}=|X|$, which is our case of interest.
\end{example}

Inequalities between cardinal characteristics associated with relational systems can be determined by the dual of a relational system and also via Tukey connections, which we introduce below.

\begin{definition}\label{def:dual}
If $\Rbf=\la X,Y,\sqsubset\ra$ is a relational system, then its \emph{dual relational system} is defined by $\Rbf^\perp:=\la Y,X,\sqsubset^\perp\ra$ where $y \sqsubset^\perp x$ if $\neg(x \sqsubset y)$.
\end{definition}

\begin{fact}\label{fct:dual}
Let $\Rbf=\la X,Y,\sqsubset\ra$ be a relational system.
\begin{enumerate}[label=(\alph*)]
    \item $(\Rbf^\perp)^\perp=\Rbf$.
    \item The notions of $\Rbf^\perp$-dominating set and $\Rbf$-unbounded set are equivalent.
    \item The notions of $\Rbf^\perp$-unbounded set and $\Rbf$-dominating set are equivalent.
    \item $\dfrak(\Rbf^\perp)=\bfrak(\Rbf)$ and $\bfrak(\Rbf^\perp)=\dfrak(\Rbf)$.
\end{enumerate}
\end{fact}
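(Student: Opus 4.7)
The plan is to prove \autoref{fct:dual} purely by unfolding the definitions of dual, $\Rbf$-bounded, $\Rbf$-dominating, $\bfrak(\Rbf)$, and $\dfrak(\Rbf)$; there is no substantive step, each clause is a one-line verification.

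First I would verify (a): by \autoref{def:dual}, $\Rbf^\perp = \la Y, X, \sqsubset^\perp\ra$ where $y \sqsubset^\perp x \Leftrightarrow \neg(x \sqsubset y)$. Applying $\perp$ again, $(\Rbf^\perp)^\perp = \la X, Y, (\sqsubset^\perp)^\perp\ra$, where $x \, (\sqsubset^\perp)^\perp \, y \Leftrightarrow \neg(y \sqsubset^\perp x) \Leftrightarrow \neg\neg(x \sqsubset y) \Leftrightarrow x \sqsubset y$, so $(\Rbf^\perp)^\perp = \Rbf$.

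Next I would handle (b) and (c) in parallel. For (b), a set $E \subseteq X$ is $\Rbf^\perp$-dominating iff $\forall y \in Y\ \exists x \in E\colon y \sqsubset^\perp x$, which by definition of $\sqsubset^\perp$ unfolds to $\forall y \in Y\ \exists x \in E\colon \neg(x \sqsubset y)$, equivalently, $\neg \exists y \in Y\ \forall x \in E\colon x \sqsubset y$, which is exactly the statement that $E$ is $\Rbf$-unbounded. Clause (c) is the same verification applied to $\Rbf^\perp$ in place of $\Rbf$, using (a) to identify $(\Rbf^\perp)^\perp$ with $\Rbf$.

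Finally I would deduce (d) from (b) and (c). By (c), the $\Rbf^\perp$-unbounded subsets of $Y$ are precisely the $\Rbf$-dominating subsets of $Y$; taking the minimum cardinality on both sides gives $\bfrak(\Rbf^\perp) = \dfrak(\Rbf)$. Symmetrically, (b) says the $\Rbf^\perp$-dominating subsets of $X$ are the $\Rbf$-unbounded subsets of $X$, yielding $\dfrak(\Rbf^\perp) = \bfrak(\Rbf)$. There is no genuine obstacle here; the only thing to be careful about is keeping straight which side of the relational system the subset sits in, which is why I would state (b) and (c) explicitly before collecting (d).
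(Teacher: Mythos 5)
Your proof is correct: each clause is exactly the definitional unfolding one expects, and the deduction of (d) from (b) and (c) is sound. The paper states this as a \emph{Fact} and omits the proof entirely as routine, so your argument is precisely the intended one; there is nothing to compare beyond that.
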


\begin{definition}\label{def:Tukey}
Let $\Rbf=\la X,Y,\sqsubset\ra$ and $\Rbf'=\la X',Y',\sqsubset'\ra$ be relational systems. We say that $(\Psi_-,\Psi_+)\colon\Rbf\to\Rbf'$ is a \emph{Tukey connection from $\Rbf$ into $\Rbf'$} if 
 $\Psi_-\colon X\to X'$ and $\Psi_+\colon Y'\to Y$ are functions such that  \[\forall\, x\in X\ \forall\, y'\in Y'\colon \Psi_1(x) \sqsubset' y' \Rightarrow x \sqsubset \Psi_2(y').\]
The \emph{Tukey order} between relational systems is defined by
$\Rbf\leqT\Rbf'$ iff there is a Tukey connection from $\Rbf$ into $\Rbf'$. \emph{Tukey equivalence} is defined by $\Rbf\eqT\Rbf'$ iff $\Rbf\leqT\Rbf'$ and $\Rbf'\leqT\Rbf$
\end{definition}

\begin{fact}\label{fct:Tukey}
Assume that $\Rbf=\la X,Y,\sqsubset\ra$ and $\Rbf'=\la X',Y',\sqsubset'\ra$ are relational systems and that $(\Psi_-,\Psi_+)\colon \Rbf\to\Rbf'$ is a Tukey connection.
\begin{enumerate}[label=(\alph*)]
    \item If $D'\subseteq Y'$ is $\Rbf'$-dominating, then $\Psi_+[D']$ is $\Rbf$-dominating.
    \item $(\Psi_+,\Psi_-)\colon (\Rbf')^\perp\to\Rbf^\perp$ is a Tukey connection.
    \item If $E\subseteq X$ is $\Rbf$-unbounded then $\Psi_-[E]$ is $\Rbf'$-unbounded.
\end{enumerate}
\end{fact}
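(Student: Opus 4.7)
The three clauses are purely formal consequences of the Tukey condition $\Psi_-(x)\sqsubset' y'\Rightarrow x\sqsubset\Psi_+(y')$ together with the definition of the dual, so the only real work is to check the quantifier manipulations carefully in the right order: I would prove (a) directly, then (b) as a contrapositive, and finally obtain (c) by combining (a), (b), and \autoref{fct:dual}.

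\textbf{Clause (a).} Given an $\Rbf'$-dominating $D'\subseteq Y'$, take an arbitrary $x\in X$. Since $\Psi_-(x)\in X'$ and $D'$ is $\Rbf'$-dominating, pick $y'\in D'$ with $\Psi_-(x)\sqsubset' y'$. The Tukey condition then yields $x\sqsubset\Psi_+(y')$, and $\Psi_+(y')\in\Psi_+[D']$, showing that $\Psi_+[D']$ is $\Rbf$-dominating.

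\textbf{Clause (b).} Here $(\Rbf')^\perp=\la Y',X',\sqsubset'^\perp\ra$ and $\Rbf^\perp=\la Y,X,\sqsubset^\perp\ra$, so the pair $(\Psi_+,\Psi_-)$ has the correct types: $\Psi_+\colon Y'\to Y$ as the ``forward'' map and $\Psi_-\colon X\to X'$ as the ``backward'' map. The required Tukey condition reads
\[\forall\, y'\in Y'\ \forall\, x\in X\colon\ \Psi_+(y')\sqsubset^\perp x\ \Rightarrow\ y'\sqsubset'^\perp\Psi_-(x),\]
which unfolds to $\neg(x\sqsubset\Psi_+(y'))\Rightarrow\neg(\Psi_-(x)\sqsubset' y')$. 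This is exactly the contrapositive of the Tukey condition defining $(\Psi_-,\Psi_+)\colon\Rbf\to\Rbf'$, hence holds.

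\textbf{Clause (c).} Let $E\subseteq X$ be $\Rbf$-unbounded. By \autoref{fct:dual}(b), $E$ is the same as an $\Rbf^\perp$-dominating subset of $X$ (the second coordinate of $\Rbf^\perp$). Now apply clause (a) to the Tukey connection $(\Psi_+,\Psi_-)\colon(\Rbf')^\perp\to\Rbf^\perp$ from (b): the image $\Psi_-[E]$ is $(\Rbf')^\perp$-dominating, and another application of \autoref{fct:dual}(b) shows that $\Psi_-[E]$ is $\Rbf'$-unbounded, as desired. No step is genuinely hard; the only pitfall is keeping track of the domain/codomain swap in the dual, which I would handle by always writing out the relational system $\la X,Y,\sqsubset\ra$ explicitly before applying a definition.
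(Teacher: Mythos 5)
Your proof is correct, and the paper itself states \autoref{fct:Tukey} without proof, so there is no alternative argument to compare against; your derivation of (b) as the contrapositive of the Tukey condition and of (c) from (a), (b) and \autoref{fct:dual} is exactly the standard argument one would expect. The only caution is notational: the paper's \autoref{def:Tukey} writes $\Psi_1,\Psi_2$ where it means $\Psi_-,\Psi_+$, and you have read it the intended way.
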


\begin{corollary}\label{cor:Tukeyval}
\begin{enumerate}[label=(\alph*)]
    \item $\Rbf\leqT\Rbf'$ implies $(\Rbf')^\perp\leqT\Rbf^\perp$.
    \item $\Rbf\leqT\Rbf'$ implies $\bfrak(\Rbf')\leq\bfrak(\Rbf)$ and $\dfrak(\Rbf)\leq\dfrak(\Rbf')$.
    \item $\Rbf\eqT\Rbf'$ implies $\bfrak(\Rbf')=\bfrak(\Rbf)$ and $\dfrak(\Rbf)=\dfrak(\Rbf')$.
\end{enumerate}
\end{corollary}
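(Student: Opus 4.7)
The plan is to derive all three parts of \autoref{cor:Tukeyval} directly from \autoref{fct:Tukey}, treating the latter as a black box. Each item is essentially an application of one clause of \autoref{fct:Tukey} together with the observation that the image of a set under a function cannot increase cardinality.

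For part (a), I would simply invoke \autoref{fct:Tukey}(b): given a Tukey connection $(\Psi_-,\Psi_+)\colon\Rbf\to\Rbf'$ witnessing $\Rbf\leqT\Rbf'$, the pair $(\Psi_+,\Psi_-)$ is a Tukey connection from $(\Rbf')^\perp$ into $\Rbf^\perp$, which by definition gives $(\Rbf')^\perp\leqT\Rbf^\perp$.

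For part (b), I would fix a Tukey connection $(\Psi_-,\Psi_+)\colon\Rbf\to\Rbf'$ and handle the two inequalities separately. For $\dfrak(\Rbf)\leq\dfrak(\Rbf')$, pick an $\Rbf'$-dominating set $D'\subseteq Y'$ with $|D'|=\dfrak(\Rbf')$; by \autoref{fct:Tukey}(a), $\Psi_+[D']\subseteq Y$ is $\Rbf$-dominating, and clearly $|\Psi_+[D']|\leq|D'|$, hence $\dfrak(\Rbf)\leq\dfrak(\Rbf')$. For $\bfrak(\Rbf')\leq\bfrak(\Rbf)$, pick an $\Rbf$-unbounded set $E\subseteq X$ with $|E|=\bfrak(\Rbf)$; by \autoref{fct:Tukey}(c), $\Psi_-[E]\subseteq X'$ is $\Rbf'$-unbounded, and $|\Psi_-[E]|\leq|E|$, so $\bfrak(\Rbf')\leq\bfrak(\Rbf)$. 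Alternatively, the $\bfrak$-inequality could be deduced from the $\dfrak$-inequality applied to the duals, using (a) and \autoref{fct:dual}(d).

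For part (c), I would simply apply part (b) to each of the two Tukey connections witnessing $\Rbf\eqT\Rbf'$, obtaining both $\bfrak(\Rbf')\leq\bfrak(\Rbf)$ and $\bfrak(\Rbf)\leq\bfrak(\Rbf')$, and likewise for $\dfrak$.

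There is no real obstacle here: the statement is a formal consequence of \autoref{fct:Tukey}, and the only mild care needed is to make sure the direction of the inequalities in (b) is correctly tracked (a Tukey connection going $\Rbf\to\Rbf'$ pushes dominating sets backwards from $\Rbf'$ to $\Rbf$, but pushes unbounded sets forwards from $\Rbf$ to $\Rbf'$, yielding inequalities that point in opposite directions for $\bfrak$ and $\dfrak$).
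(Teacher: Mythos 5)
Your proposal is correct and matches the paper's intent exactly: the paper states this as an immediate corollary of \autoref{fct:Tukey} with no written proof, and your derivation --- (a) from \autoref{fct:Tukey}(b), (b) from \autoref{fct:Tukey}(a) and (c) plus the fact that images do not increase cardinality, and (c) by applying (b) in both directions --- is precisely the standard argument being left implicit. The direction-tracking remark at the end is accurate and worth having spelled out.
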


\begin{example}\label{ex:trivialTukey}
The diagram in \autoref{diag:idealI} can be expressed in terms of the Tukey order since $\Cbf_\Iwf\leqT\Iwf$ and $\Cbf_\Iwf^\perp\leqT\Iwf$.
\end{example}

\begin{example}\label{ex:tukeysmall}
If $\theta'\leq\theta$ are infinite cardinals, and $\theta\leq|X|\leq|X'|$, then $\Cbf_{[X]^{<\theta}}\leqT \Cbf_{[X']^{<\theta'}}$. On the other hand, for any regular cardinal $\mu$, $\Cbf_{[\mu]^{<\mu}}\eqT[\mu]^{<\mu}\eqT\mu$, so $\add([\mu]^{<\mu})=\cof([\mu]^{<\mu})=\mu$. As a consequence:
\end{example}

\begin{fact}\label{impEUB}
Assume that $\theta\leq\lambda$ are infinite cardinals. Then, for any regular $\mu\in[\theta,\lambda]$, $\mu\leqT\Cbf_{[\lambda]^{<\theta}}$.
\end{fact}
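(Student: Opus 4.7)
The goal is to produce a Tukey connection $(\Psi_-,\Psi_+)\colon \mu\to\Cbf_{[\lambda]^{<\theta}}$, where $\mu$ is viewed as the directed preorder $\langle\mu,\mu,\leq\rangle$ and $\Cbf_{[\lambda]^{<\theta}}=\langle\lambda,[\lambda]^{<\theta},\in\rangle$. Thus I need maps $\Psi_-\colon\mu\to\lambda$ and $\Psi_+\colon[\lambda]^{<\theta}\to\mu$ satisfying: for every $\alpha\in\mu$ and every $A\in[\lambda]^{<\theta}$, if $\Psi_-(\alpha)\in A$ then $\alpha\leq\Psi_+(A)$.

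The plan is to try the simplest possible candidate. Since $\mu\leq\lambda$, the inclusion $\mu\hookrightarrow\lambda$ is well defined, so I set $\Psi_-(\alpha):=\alpha$. For $\Psi_+$, the natural choice that must dominate every $\alpha\in A\cap\mu$ is $\Psi_+(A):=\sup(A\cap\mu)$ (with $\sup\varnothing=0$). With these definitions the Tukey condition is immediate: if $\alpha=\Psi_-(\alpha)\in A$, then $\alpha\in A\cap\mu$, hence $\alpha\leq\sup(A\cap\mu)=\Psi_+(A)$.

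The one step that needs verification --- and the only place where the hypotheses really enter --- is that $\Psi_+(A)$ actually belongs to $\mu$, i.e.\ $\sup(A\cap\mu)<\mu$. Here I use exactly the assumptions $\theta\leq\mu$ and $\mu$ regular: $|A\cap\mu|\leq|A|<\theta\leq\mu$, and by regularity of $\mu$ any subset of $\mu$ of size $<\mu$ is bounded in $\mu$, so $\sup(A\cap\mu)<\mu$. This is the only subtle point of the argument; regularity of $\mu$ is in fact essential, since a singular $\mu$ with $\cf(\mu)<\theta$ would admit some $A\in[\mu]^{<\theta}$ cofinal in $\mu$, and the supremum would escape $\mu$.
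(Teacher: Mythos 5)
Your proof is correct: the maps $\Psi_-(\alpha)=\alpha$ and $\Psi_+(A)=\sup(A\cap\mu)$, together with the observation that regularity of $\mu$ and $|A\cap\mu|<\theta\leq\mu$ keep the supremum below $\mu$, give exactly the Tukey connection the paper obtains by composing $\mu\eqT\Cbf_{[\mu]^{<\mu}}$ with the monotonicity $\Cbf_{[\mu]^{<\mu}}\leqT\Cbf_{[\lambda]^{<\theta}}$ from \autoref{ex:tukeysmall}. It is the same argument, just written out directly rather than factored through the cited example.
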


In fact, the inequalities in Cicho\'n's diagram (\autoref{FigCichon}) are obtained via the Tukey connections illustrated in \autoref{fig:cichontukey}.

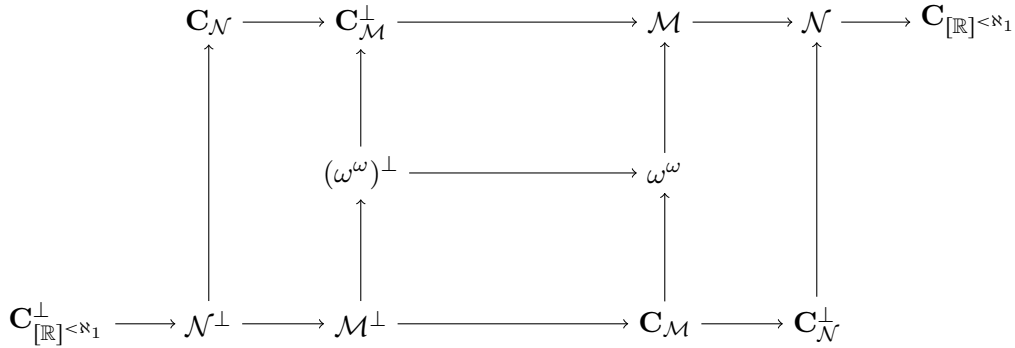
\begin{figure}[ht]
\centering
\begin{tikzpicture}
\small{
\node (aleph1) at (-1,3) {$\Cbf_{[\R]^{<\aleph_1}}^\perp$};
\node (addn) at (1,3){$\Nwf^\perp$};
\node (covn) at (1,7){$\Cbf_\Nwf$};
\node (nonn) at (9,3) {$\Cbf_\Nwf^\perp$} ;
\node (cfn) at (9,7) {$\Nwf$} ;
\node (addm) at (3,3) {$\Mwf^\perp$} ;
\node (covm) at (7,3) {$\Cbf_\Mwf$} ;
\node (nonm) at (3,7) {$\Cbf_\Mwf^\perp$} ;
\node (cfm) at (7,7) {$\Mwf$} ;
\node (b) at (3,5) {$(\baire)^\perp$};
\node (d) at (7,5) {$\baire$};
\node (c) at (11,7) {$\Cbf_{[\R]^{<\aleph_1}}$};
\draw (aleph1) edge[->] (addn)
      (addn) edge[->] (covn)
      (covn) edge [->] (nonm)
      (nonm)edge [->] (cfm)
      (cfm)edge [->] (cfn)
      (cfn) edge[->] (c);
\draw
   (addn) edge [->]  (addm)
   (addm) edge [->]  (covm)
   (covm) edge [->]  (nonn)
   (nonn) edge [->]  (cfn);
\draw (addm) edge [->] (b)
      (b)  edge [->] (nonm);
\draw (covm) edge [->] (d)
      (d)  edge[->] (cfm);
\draw (b) edge [->] (d);
}
\end{tikzpicture}
\caption{Cichon's diagram via Tukey connections. Any arrow represents a Tukey connection in the given direction.}\label{fig:cichontukey}
\end{figure}

In this paper, when we force a value of a cardinal characteristic via ccc posets, we actually force Tukey connections with relational systems of the form $\Cbf_{[\lambda]^{<\theta}}$ and $[\lambda]^{<\theta}$ for some cardinals $\theta\leq\lambda$ with $\theta$ uncountable regular. For instance, if $\Rbf$ is a relational system and we force $\Rbf\eqT\Cbf_{[\lambda]^{<\theta}}$, then we obtain $\bfrak(\Rbf)=\non([\lambda]^{<\theta})=\theta$ and $\dfrak(\Rbf)=\cov([\lambda]^{<\theta})=\lambda$. In the case when $\lambda^{<\theta}=\lambda$, we obtain the same values when forcing $\Rbf\eqT[\lambda]^{<\theta}$, also because of the following result.

\begin{lemma}\label{lem:thetaid}
If $\theta$ is a regular cardinal and $|X|^{<\theta}=|X|$, then $\Cbf_{[X]^{<\theta}}\eqT[X]^{<\theta}$.
\end{lemma}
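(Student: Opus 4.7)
The plan is to exhibit Tukey connections in both directions.

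The direction $\Cbf_{[X]^{<\theta}} \leqT [X]^{<\theta}$ is completely general (for any ideal $\Iwf$ containing singletons, as noted in \autoref{ex:trivialTukey}): take $\Psi_-\colon X\to [X]^{<\theta}$ defined by $\Psi_-(x) := \{x\}$ and $\Psi_+\colon [X]^{<\theta}\to [X]^{<\theta}$ the identity. Then $\{x\}\subseteq B$ iff $x\in B$, so this is a Tukey connection from $\Cbf_{[X]^{<\theta}}$ into $[X]^{<\theta}$.

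For the nontrivial direction $[X]^{<\theta}\leqT \Cbf_{[X]^{<\theta}}$, the cardinality hypothesis $|X|^{<\theta}=|X|$ gives $\big|[X]^{<\theta}\big|=|X|$, hence a bijection $e\colon [X]^{<\theta}\to X$. Define $\Psi_-\colon [X]^{<\theta}\to X$ by $\Psi_-(A) := e(A)$, and $\Psi_+\colon [X]^{<\theta}\to [X]^{<\theta}$ by
\[
\Psi_+(B) := \bigcup\{e^{-1}(x): x\in B\}.
\]
The set $\Psi_+(B)$ is a union of $<\theta$ many sets, each of size $<\theta$, so by regularity of $\theta$ it lies in $[X]^{<\theta}$. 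If $\Psi_-(A) = e(A)\in B$, then $A = e^{-1}(e(A))\subseteq \Psi_+(B)$, which verifies the Tukey connection condition.

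There is no real obstacle: the only thing to double check is that $\Psi_+(B)\in [X]^{<\theta}$, and this is exactly where regularity of $\theta$ is used (a union of $<\theta$-many sets of size $<\theta$ has size $<\theta$). Combining both Tukey connections yields $\Cbf_{[X]^{<\theta}}\eqT [X]^{<\theta}$.
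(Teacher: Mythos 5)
Your proof is correct and follows essentially the same route as the paper: the easy direction is the standard $\Cbf_\Iwf\leqT\Iwf$ connection, and the substantive direction uses a bijection between $[X]^{<\theta}$ and $X$ (available from $|X|^{<\theta}=|X|$) together with the union map $B\mapsto\bigcup\{e^{-1}(x):x\in B\}$, with regularity of $\theta$ guaranteeing the union stays in $[X]^{<\theta}$. The paper merely phrases this by transporting everything to $Z:=[X]^{<\theta}$ and using the identity map instead of composing with the bijection explicitly; the two presentations are interchangeable.
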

\begin{proof}
The relation $\leqT$ is immediate from \autoref{ex:trivialTukey}. For the converse, since $Z:=[X]^{<\theta}$ has the same size as $X$, we get $\Cbf_{[Z]^{<\theta}}\eqT\Cbf_{[X]^{<\theta}}$ and $[Z]^{<\theta}\eqT [X]^{<\theta}$ by using a bijection from $X$ into $Z$, so it is enough to show that $[X]^{<\theta}\leqT\Cbf_{[Z]^{<\theta}}$. The Tukey connection is given by the identity map from $[X]^{<\theta}$ into $Z$, and by the map $\Psi_+\colon [Z]^{<\theta}\to [X]^{<\theta}$ defined by $\Psi_+(A):=\bigcup A$.
\end{proof}

Motivated by the previous explanation, we look at characterizations of the Tukey order between $\Cbf_{[X]^{<\theta}}$ and other relational systems.

\begin{lemma}\label{lem:TukeyCtheta}
Let $\theta$ be an infinite cardinal, $I$ a set of size $\geq\theta$ and let $\Rbf=\la X,Y,\sqsubset\ra$ be a relational system. Then: 
\begin{enumerate}[label=(\alph*)]
    \item If $|X|\geq\theta$, then $\Rbf\leqT\Cbf_{[X]^{<\theta}}$ iff $\forall\, A \in[X]^{<\theta}\ \exists\, y_A\in Y\ \forall\, x\in A\colon x\sqsubset y_A$, i.e.\ any subset of $X$ of size ${<}\theta$ is $\Rbf$-bounded. 
    
    In this case, when $\theta$ is regular, $\theta\leq\bfrak(\Rbf)$ and $\dfrak(\Rbf)\leq|X|$.
    
    \item $\Cbf_{[I]^{<\theta}}\leqT\Rbf$ iff $\exists\,\la x_i:\, i\in I\ra\subseteq X\, \forall\, y\in Y\colon |\{i\in I:\, x_i\sqsubset y\}|<\theta$.
    
    In this case, when $\theta$ is regular, $\bfrak(\Rbf)\leq\theta$ and $|I|\leq\dfrak(\Rbf)$.
\end{enumerate}
\end{lemma}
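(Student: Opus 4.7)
The plan is to handle each biconditional by unpacking \autoref{def:Tukey} directly: in one direction I construct a Tukey connection explicitly from the combinatorial witness, and in the other I read the combinatorial witness off a given Tukey connection. The cardinal-value consequences then follow by combining \autoref{cor:Tukeyval} with the computation of $\non([X]^{<\theta})$ and $\cov([X]^{<\theta})$ from \autoref{ex:ideal<theta}. There is no real obstacle: the whole argument is a straightforward definition chase, and the only point worth flagging in advance is that the first map in a Tukey connection need not be injective, so in (a) one should use $|\Psi_-[A]|\le|A|<\theta$ to keep the image inside $[X]^{<\theta}$.

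For part (a), a Tukey connection $(\Psi_-,\Psi_+)\colon \Rbf\to\Cbf_{[X]^{<\theta}}$ consists of maps $\Psi_-\colon X\to X$ and $\Psi_+\colon [X]^{<\theta}\to Y$ such that $\Psi_-(x)\in A\Rightarrow x\sqsubset\Psi_+(A)$. Given such a connection and an arbitrary $A\in[X]^{<\theta}$, I set $A':=\Psi_-[A]\in[X]^{<\theta}$ and take $y_A:=\Psi_+(A')$: every $x\in A$ then satisfies $\Psi_-(x)\in A'$, hence $x\sqsubset y_A$. Conversely, under the combinatorial hypothesis I let $\Psi_-$ be the identity on $X$ and, for each $A\in[X]^{<\theta}$, pick $\Psi_+(A)\in Y$ bounding $A$; the Tukey condition is then immediate. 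For the numeric assertions, \autoref{ex:ideal<theta} gives $\non([X]^{<\theta})=\theta$ and $\cov([X]^{<\theta})=|X|$ whenever $\theta$ is regular and $|X|\ge\theta$ (the only delicate case, $|X|=\theta$, reduces to $\cf(\theta)=\theta=|X|$), and \autoref{cor:Tukeyval}(b) converts these into $\theta\le\bfrak(\Rbf)$ and $\dfrak(\Rbf)\le|X|$.

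For part (b), a Tukey connection $(\Phi_-,\Phi_+)\colon\Cbf_{[I]^{<\theta}}\to\Rbf$ consists of $\Phi_-\colon I\to X$ and $\Phi_+\colon Y\to[I]^{<\theta}$ with $\Phi_-(i)\sqsubset y\Rightarrow i\in\Phi_+(y)$. Given such a connection, the sequence $x_i:=\Phi_-(i)$ works, since $\{i\in I: x_i\sqsubset y\}\subseteq\Phi_+(y)\in[I]^{<\theta}$ has size ${<}\theta$ for every $y\in Y$. Conversely, given the required sequence, define $\Phi_-(i):=x_i$ and $\Phi_+(y):=\{i\in I:x_i\sqsubset y\}$: the hypothesis puts $\Phi_+(y)$ in $[I]^{<\theta}$, and the Tukey implication is tautological. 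The bounds $\bfrak(\Rbf)\le\theta$ and $|I|\le\dfrak(\Rbf)$ again follow from \autoref{cor:Tukeyval}(b) and \autoref{ex:ideal<theta}.
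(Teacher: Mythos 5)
Your proof is correct and follows essentially the same route as the paper's: the right-to-left directions use the identity (resp.\ $i\mapsto x_i$) paired with a choice of bound, and the left-to-right directions extract the witness via $y_A:=\Psi_+(\Psi_-[A])$ and $\{i: x_i\sqsubset y\}\subseteq\Phi_+(y)$, with the cardinal bounds read off from \autoref{cor:Tukeyval} and \autoref{ex:ideal<theta}. The only difference is that you make explicit the (harmless) point that $|\Psi_-[A]|\le|A|<\theta$, which the paper leaves implicit.
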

\begin{proof}
\noindent(a): 
The implication from right to left is immediate by using the maps $x\mapsto x$ (identity on $X$) and $A\mapsto y_A$ as a Tukey connection. For the converse, assume $\Rbf\leqT\Cbf_{[X]^{<\theta}}$, i.e., there is a Tukey connection $(F,G)\colon\Rbf\to\Cbf_{[X]^{<\theta}}$. For $A\in[X]^{<\theta}$, $y_A:=G(F[A])$ is as desired.
The latter part uses \autoref{ex:ideal<theta}.\smallskip

\noindent(b): The implication from right to left follows by using the maps $i\mapsto x_i$ and $y\mapsto \{i\in I:\, x_i\sqsubset y\}$. To see the converse, assume that $(F,G)\colon\Cbf_{[I]^{<\theta}}\to\Rbf$ is a Tukey connection. For $i\in I$, let $x_i:=F(i)$, so $\{i\in I:\, x_i\sqsubset y\}\subseteq G(y)$ and $|G(y)|<\theta$.
\end{proof}

When forcing constellations of Cicho\'n's diagram via ccc posets, we look at simpler characterizations of the relational systems of its cardinals by coding with reals.

\begin{definition}\label{def:defrel}
We say that $\Rbf=\la X, Y, \sqsubset\ra$ is a \textit{definable relational system of the reals} if both $X$ and $Y$ are non-empty and analytic in Polish spaces $Z$ and $W$, respectively, and $\sqsubset$ is analytic in $Z\times W$.
\end{definition}

\begin{remark}\label{rem:defrel}
In the previous definition indicates that any definable relational system is Tukey equivalent to a relational system of the form $\la\omega^\omega,\omega^\omega,\sqsubset\ra$ for some analytic relation $\sqsubset$ on $\baire$. Indeed, if $\Rbf$ is as in \autoref{def:defrel}, then Tukey connections are obtained by some Borel isomorphism from $\omega^\omega$ onto $Z$.
\end{remark}

To characterize the relational systems of Cicho\'n's diagram, we use relational systems with better definitions.

\begin{definition}\label{DefPolish}
We say that $\Rbf=\langle X,Y,\sqsubset\rangle$ is a \textit{Polish relational system (Prs)} if the following is satisfied:
\begin{enumerate}[label=(\roman*)]
\item $X$ is a perfect Polish space,
\item $Y$ is a non-empty analytic subspace of some Polish space $Z$ and
\item\label{Fsigma} $\sqsubset\cap(X\times Z)=\bigcup_{n<\omega}\sqsubset_{n}$ where $\la\sqsubset_{n}\ra_{n<\omega}$ is some increasing sequence of closed subsets of $X\times Z$ such that $(\sqsubset_{n})^{y}=\{x\in X:\, x\sqsubset_{n}y \}$ is closed nowhere dense for any $n<\omega$ and $y\in Y$.
\end{enumerate}
\end{definition}

By~\ref{Fsigma}, we obtain:

\begin{fact}\label{Prsfact}
If $\Rbf$ is a Prs then $\la X,\Mwf(X),\in\ra\leqT\Rbf$. Therefore, $\bfrak(\Rbf)\leq \non(\Mwf)$ and $\cov(\Mwf)\leq\dfrak(\Rbf)$.
\end{fact}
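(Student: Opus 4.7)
The plan is to exhibit an explicit Tukey connection $(\Psi_-,\Psi_+)\colon \langle X,\Mwf(X),\in\rangle \to \Rbf$, and then read off the cardinal inequalities using \autoref{cor:Tukeyval}.

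First, I would take $\Psi_-\colon X\to X$ to be the identity. The key observation for $\Psi_+$ is that condition \ref{Fsigma} in \autoref{DefPolish} is exactly designed so that for each $y\in Y$, the vertical section
\[
\{x\in X:\, x\sqsubset y\}=\bigcup_{n<\omega}(\sqsubset_n)^y
\]
is a countable union of closed nowhere dense subsets of $X$, hence a meager $F_\sigma$ set. I would therefore define $\Psi_+\colon Y\to\Mwf(X)$ by
\[
\Psi_+(y):=\bigcup_{n<\omega}(\sqsubset_n)^y.
\]
With these choices, whenever $\Psi_-(x)=x\sqsubset y$ we automatically have $x\in \Psi_+(y)$, so $(\Psi_-,\Psi_+)$ is a Tukey connection from $\langle X,\Mwf(X),\in\rangle$ into $\Rbf$.

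Having established $\langle X,\Mwf(X),\in\rangle\leqT\Rbf$, \autoref{cor:Tukeyval}(b) yields $\bfrak(\Rbf)\leq\bfrak(\langle X,\Mwf(X),\in\rangle)=\non(\Mwf(X))$ and $\cov(\Mwf(X))=\dfrak(\langle X,\Mwf(X),\in\rangle)\leq\dfrak(\Rbf)$. I would then conclude by noting that, since $X$ is a perfect Polish space, a Borel isomorphism between $X$ and $\R$ (preserving meagerness in both directions) gives $\non(\Mwf(X))=\non(\Mwf)$ and $\cov(\Mwf(X))=\cov(\Mwf)$, which delivers the stated inequalities.

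There is no real obstacle here: the entire argument is the observation that \ref{Fsigma} is precisely the hypothesis that sections of $\sqsubset$ are meager, so the Tukey connection writes itself. The only minor care point is the last identification of $\non(\Mwf(X))$ and $\cov(\Mwf(X))$ with the standard invariants on $\R$, which is routine via Borel isomorphism.
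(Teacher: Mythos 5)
Your proof is correct and is exactly the argument the paper intends (the paper gives no proof beyond ``By (iii), we obtain''): condition \ref{Fsigma} makes each section $\{x\in X:\, x\sqsubset y\}=\bigcup_{n<\omega}(\sqsubset_n)^y$ a meager $F_\sigma$ set, and the identity on $X$ together with $y\mapsto\bigcup_{n<\omega}(\sqsubset_n)^y$ is the desired Tukey connection. The only imprecision is at the very end: an arbitrary Borel isomorphism need not preserve meagerness, so to pass from $\non(\Mwf(X)),\cov(\Mwf(X))$ to $\non(\Mwf),\cov(\Mwf)$ you should instead invoke the standard fact that any two perfect Polish spaces have Tukey-equivalent meager ideals (e.g.\ via homeomorphic dense $G_\delta$ subsets homeomorphic to $\omega^\omega$), which is routine but worth stating accurately.
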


\begin{example}\label{ExmPrs}
The following are Prs that describe the cardinal characteristics of Cicho\'n's diagram.
  \begin{enumerate}[label=(\arabic*)]
     \item Define the relational system $\Mg:=\la2^\omega,\Xi,\in^{\bullet}\ra$ where 
     \[\Xi := \{f \colon 2^{<\omega}\to2^{<\omega}:\, \forall\, s \in 2^{<\omega}\colon s \subseteq f(s)\}\] 
     and $x\in^{\bullet} f \textrm{\ iff\ } |\{s\in 2^{<\omega}:\, x \supseteq f(s)\}|<\aleph_0$. This is a Prs and $\Mg\eqT\Cbf_\Mwf$. Hence $\bfrak(\Mg)=\non(\Mwf)$ and $\dfrak(\Mg)=\cov(\Mwf)$.
     
     \item  The relational system $\baire:=\la\omega^\omega,\omega^\omega,\leq^*\ra$ is already Polish. 

     \item  Define $\Omega_n:=\{a\in [2^{<\omega}]^{<\aleph_0}:\, \Lb_2(\bigcup_{s\in a}[s])\leq 2^{-n}\}$ (endowed with the discrete topology) where $\Lb_2$ is the Lebesgue measure on $2^\omega$. Put $\Omega:=\prod_{n<\omega}\Omega_n$ with the product topology, which is a perfect Polish space. For every $x\in \Omega$ denote $N_{x}^{*}:=\bigcap_{n<\omega}\bigcup_{s\in x(n)}[s]$, which is clearly a Borel null set in $2^{\omega}$.

     Define the Prs $\Cn:=\la \Omega, 2^\omega, \sqsubset\ra$ where $x\sqsubset z$ iff $z\notin N_{x}^{*}$. Recall that any null set in $2^\omega$ is a subset of $N_{x}^{*}$ for some $x\in \Omega$, so $\Cn\eqT\Cbf_\Nwf^\perp$. Hence, $\bfrak(\Cn)=\cov(\Nwf)$ and $\dfrak(\Cn)=\non(\Nwf)$.

     \item  For each $k<\omega$ let $\id^k\colon \omega\to\omega$ such that $\id^k(i)=i^k$ for all $i<\omega$, and $\Hcal:=\{\id^{k+1}:\, k<\omega\}$. Let $\Lc^*:=\la\omega^\omega, \Scal(\omega, \Hcal), \in^*\ra$ be the Polish relational system where \[\Scal(\omega, \Hcal):=\{\varphi\colon \omega\to[\omega]^{<\aleph_0}:\, \exists\, h\in\Hcal\ \forall\, i<\omega\colon |\varphi(i)|\leq h(i)\}\]
     and $x\in\varphi$ iff $x(i)\in\varphi(i)$ for all but finitely many $i$.
     As consequence of~\cite{BartInv}, $\Lc^*\eqT\Nwf$, so $\bfrak(\Lc^*)=\add(\Nwf)$ and $\dfrak(\Lc^*)=\cof(\Nwf)$.     
  \end{enumerate}
\end{example}

To conclude this section, we review products of relational systems.

\begin{definition}\label{def:prodrel}
  Let $\Rbf=\la X,Y,\sqsubset\ra$ and $\Rbf'=\la X',Y',\sqsubset'\ra$ be relational systems.
  Define the relational system $\Rbf\times\Rbf':=\la X\times X', Y\times Y', \sqsubset_\times\ra$ by
  \[(x,x')\sqsubset_\times (y,y') \sii x\sqsubset y\text{ and }x'\sqsubset'y'.\]
\end{definition}

\begin{fact}\label{fct:prodrel}
For relational systems $\Rbf$ and $\Rbf'$:
  \begin{enumerate}[label=(\alph*)]
    \item $\Rbf\leqT \Rbf\times\Rbf'$ and $\Rbf'\leqT \Rbf\times\Rbf'$.
    \item $\bfrak(\Rbf\times\Rbf')=\min\{\bfrak(\Rbf),\bfrak(\Rbf')\}$ and $\max\{\dfrak(\Rbf),\dfrak(\Rbf')\}\leq \dfrak(\Rbf\times\Rbf')\leq \dfrak(\Rbf)\cdot\dfrak(\Rbf')$.
    \item If $S$ and $S'$ are directed preorders, then so is $S\times S'$.
  \end{enumerate}
\end{fact}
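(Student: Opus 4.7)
The plan is to handle the three parts separately, each reducing to elementary verifications.

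For \textbf{(a)}, I will exhibit explicit Tukey connections. Fix any $x'_0 \in X'$, which exists since relational systems have nonempty underlying sets. Define $\Psi_-\colon X \to X \times X'$ by $\Psi_-(x) := (x, x'_0)$ and $\Psi_+\colon Y \times Y' \to Y$ by $\Psi_+(y, y') := y$. Then $\Psi_-(x) \sqsubset_\times (y, y')$ unpacks to $x \sqsubset y$ and $x'_0 \sqsubset' y'$, which in particular yields $x \sqsubset \Psi_+(y, y')$. This shows $\Rbf \leqT \Rbf \times \Rbf'$; the symmetric argument gives $\Rbf' \leqT \Rbf \times \Rbf'$.

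For \textbf{(b)}, the inequalities $\bfrak(\Rbf \times \Rbf') \leq \bfrak(\Rbf), \bfrak(\Rbf')$ and $\dfrak(\Rbf), \dfrak(\Rbf') \leq \dfrak(\Rbf \times \Rbf')$ follow immediately from (a) by \autoref{cor:Tukeyval}(b), giving one direction in each chain. For the reverse on $\bfrak$, I will take an unbounded $F \subseteq X \times X'$ of minimum size and look at its projections $F_0 \subseteq X$ and $F_1 \subseteq X'$: if $F_0$ were bounded by some $y \in Y$ and $F_1$ by some $y' \in Y'$, then $(y, y')$ would bound $F$, contradicting unboundedness. Hence at least one projection is unbounded, giving $|F| \geq \bfrak(\Rbf)$ or $|F| \geq \bfrak(\Rbf')$, as required. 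For the upper bound on $\dfrak$, given $\Rbf$-dominating $D$ of size $\dfrak(\Rbf)$ and $\Rbf'$-dominating $D'$ of size $\dfrak(\Rbf')$, the product $D \times D'$ is $(\Rbf \times \Rbf')$-dominating: any $(x, x') \in X \times X'$ is dominated by some $(y, y') \in D \times D'$ component-wise.

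For \textbf{(c)}, reflexivity, transitivity, and directedness of $\leq_S \times \leq_{S'}$ follow coordinate-wise; in particular, given $(x_1, x_1'), (x_2, x_2') \in S \times S'$, pick $z$ above $x_1, x_2$ in $S$ and $z'$ above $x_1', x_2'$ in $S'$ and note that $(z, z')$ bounds both pairs.

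There is no genuine obstacle here: each clause is a short direct argument, with the only mildly nontrivial step being the projection trick establishing the lower bound on $\bfrak(\Rbf \times \Rbf')$.
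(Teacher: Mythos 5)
Your proof is correct. The paper states this as a \autoref{fct:prodrel}-style ``Fact'' with no proof at all (it is considered routine), and your argument --- the constant-second-coordinate Tukey connections for (a), the projection argument for the lower bound on $\bfrak$, the product of dominating families for the upper bound on $\dfrak$, and coordinate-wise verification for (c) --- is exactly the standard argument the paper implicitly relies on.
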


In \autoref{sec:cmax} we use relational systems of the form $\Lambda:=\prod_{i<n}\nu_i$ for limit ordinals $\nu_i$ and $n<\omega$. Note that $\bfrak(\Lambda)=\min\{\cf(\nu_i):\, i<n\}$ and  $\dfrak(\Lambda)=\max\{\cf(\nu_i):\, i<n\}$.

\section{Forcing and Tukey connections}\label{sec:forcing}

We present general results illustrating the effect of FS iterations of ccc posets on the cardinal characteristics associated with a definable relational system of the reals. More concretely, if $\Rbf$ is such a relational system, we show how to force statements of the form $\Rbf\leqT\Cbf_{[I]^{<\theta}}$ and $\Cbf_{[I]^{<\theta}}\leqT\Rbf$.

We start by looking at special types of generic reals.

\begin{definition}\label{def:domreal}
Let $\Rbf=\la X,Y,\sqsubset\ra$ be a relational system and let $M$ be a set (commonly a model).
\begin{enumerate}[label=(\arabic*)]
    \item Say that $y\in Y$ is \emph{$\Rbf$-dominating over  $M$} if $\forall\, x \in  X\cap M\colon x \sqsubset y$.
    \item Say that $x$ is \emph{$\Rbf$-unbounded over  $M$} if it is $\Rbf^\perp$-dominating over $M$, that is, $\forall\, y \in  Y\cap M\colon \neg(x \sqsubset y)$.
\end{enumerate}
\end{definition}

\begin{example}\label{exm:Suslinccc}
The following are examples of very typical Suslin ccc forcing notions and the type of dominating (or unbounded) reals they add over the ground model. For precise definitions, see e.g.~\cite{BJ}.
\begin{enumerate}[label=(\arabic*)]
    \item Cohen reals are precisely the $\Cbf_\Mwf$-unbounded reals, which are precisely the $\Mg$-unbounded reals in the context of $2^\omega$. We denote Cohen forcing by $\Cor$.
    \item Random reals are precisely the $\Cbf_\Nwf$-unbounded reals, which are precisely the $\Cn^\perp$-dominating reals. We denote random forcing by $\Bor$.
    \item The eventually different real forcing $\Eor$ adds a $\Cbf_\Mwf$-dominating real in $\omega^\omega$, which can be transformed into a $\Mg$-dominating real (in $2^\omega$).
    \item Hechler forcing $\Dor$ adds an $\omega^\omega$-dominating real (usually called \emph{dominating real)}.
    \item $\LOCor$ adds an $\Lc^*$-dominating real, which also adds an $\Nwf$-dominating real.
\end{enumerate}
\end{example}

\begin{definition}
Let $\nu$ be an ordinal. An iteration $\la\Por_\xi,\Qnm_\xi:\, \xi<\nu\ra$ has \emph{finite support} (FS for short) if 
\begin{enumerate}[label=(\roman*)]
    \item $\xi<\delta\leq\nu\Rightarrow\Por_\xi\lessdot\Por_\delta$,
    \item $\Por_{\xi+1}=\Por_\xi\ast\Qnm_\xi$, and
    \item $\Por_\delta=\bigcup_{\xi<\delta}\Por_\xi$ for all limit $\delta\leq\nu$.
\end{enumerate}
We usually denote $V_\xi:=V^{\Por_\xi}$ for all $\xi\leq\nu$.
\end{definition}

It is important to have a reasonably good picture of a FS iteration before plunging into technical facts, see \autoref{FSI}.

\begin{figure}[ht]
\begin{center}
  \includegraphics[scale=1]{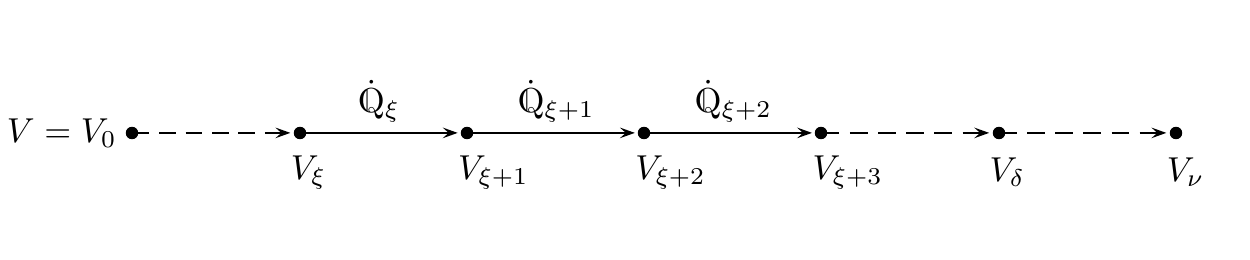}
  \caption{FS iteration of length $\nu$.}
  \label{FSI}
\end{center}
\end{figure}

\begin{remark}
For limit $\delta\leq\nu$, $V_\delta\neq\bigcup_{\xi<\delta}V_\xi$ in general.
\end{remark}

Below, we state some well-known facts (most of them without proofs) for FS iterations of forcing notions.
The following lemma states that, in FS 
iterations of certain forcing notions (e.g.\ ccc forcing notions), no new reals are added at limit stages of uncountable cofinality, a result which will be used often in forthcoming results.

\begin{lemma}\label{basicFS}
Let $\theta$ be a regular uncountable cardinal. If $\la\Por_\xi,\Qnm_\xi:\, \xi<\nu\ra$ is a FS iteration of $\theta$-cc posets, i.e.\ $\Vdash_{\Por_\xi}\Qnm_\xi$ is $\theta$-cc for all $\xi<\nu$, then $\Por_\nu$ is $\theta$-cc.

If, in addition, $\cf(\nu)\geq\theta$, then $\omega^\omega\cap V_\nu=\omega^\omega\cap\bigcup_{\xi<\nu}V_{\xi}$.
\end{lemma}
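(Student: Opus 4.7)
The plan is to prove both parts by a standard induction on $\nu$, though they will be handled as separate arguments.

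For the first assertion, I would proceed by transfinite induction on $\nu$. The base case $\nu=0$ is trivial. For the successor step, if $\Por_\xi$ is $\theta$-cc and $\Vdash_{\Por_\xi}\Qnm_\xi$ is $\theta$-cc, then $\Por_{\xi+1}=\Por_\xi\ast\Qnm_\xi$ is $\theta$-cc by the usual two-step iteration argument: given $\theta$ many conditions $(p_\alpha,\dot q_\alpha)$, fix a $\Por_\xi$-name $\dot{A}$ for the set $\{\alpha<\theta:\, p_\alpha\in\dot G\}$ and use that $\Por_\xi$ forces $|\dot A|=\theta$ (otherwise thin out using $\theta$-cc of $\Por_\xi$) together with $\theta$-ccness of $\Qnm_\xi$ to find two compatible names. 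For limit $\nu$, given $\{p_\alpha:\,\alpha<\theta\}\subseteq\Por_\nu$, each $p_\alpha$ has finite support $s_\alpha\subseteq\nu$. Apply the $\Delta$-system lemma (available since $\theta$ is regular uncountable and the $s_\alpha$ are finite) to obtain $A\in[\theta]^\theta$ and a finite root $R\subseteq\nu$ with $s_\alpha\cap s_\beta=R$ for distinct $\alpha,\beta\in A$. Let $\eta<\nu$ be an ordinal above $\max R$; by induction $\Por_\eta$ is $\theta$-cc, so two of the restrictions $p_\alpha\restriction\eta$, $p_\beta\restriction\eta$ admit a common extension $q\in\Por_\eta$. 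Since the supports of $p_\alpha$ and $p_\beta$ agree only on $R\subseteq\eta$, the condition defined by $r\restriction\eta:=q$, $r(\zeta):=p_\alpha(\zeta)$ on $s_\alpha\setminus\eta$ and $r(\zeta):=p_\beta(\zeta)$ on $s_\beta\setminus\eta$ is well-defined and extends both $p_\alpha,p_\beta$.

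For the second assertion, the $\supseteq$ inclusion is immediate. For the other direction, let $\dot f$ be a $\Por_\nu$-name for an element of $\omega^\omega$. For each $n<\omega$ pick a maximal antichain $A_n\subseteq\Por_\nu$ of conditions deciding $\dot f(n)$; by the $\theta$-ccness just established, $|A_n|<\theta$. Set
\[S:=\bigcup_{n<\omega}\bigcup_{p\in A_n}\supp(p),\]
which has cardinality at most $\aleph_0\cdot\theta=\theta$; in fact, as each $\supp(p)$ is finite and $|A_n|<\theta$ with $\theta$ uncountable regular, $|S|<\theta$. Since $\cf(\nu)\geq\theta$, $S$ is bounded in $\nu$, so there is $\eta<\nu$ with $S\subseteq\eta$. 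All the information determining $\dot f$ lives in $\Por_\eta$, hence $\dot f$ may be taken as a $\Por_\eta$-name and $\dot f^G\in V_\eta\subseteq\bigcup_{\xi<\nu}V_\xi$.

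The one delicate step is the limit case of part (1): verifying that the condition $r$ constructed from $q$ and the outside-$\eta$ coordinates of $p_\alpha$ and $p_\beta$ is genuinely well-defined and stronger than both. This rests on the crucial combinatorial fact that the $\Delta$-system root $R$ is pushed below $\eta$, so the supports of $p_\alpha$ and $p_\beta$ on $[\eta,\nu)$ are disjoint and can be amalgamated without conflict; once this is seen, $r\leq p_\alpha,p_\beta$ follows coordinatewise. Everything else is bookkeeping.
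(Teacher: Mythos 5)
Your proof is correct and is the standard textbook argument; the paper itself offers no proof of this lemma, explicitly listing it among the ``well-known facts (most of them without proofs)'' about FS iterations, so there is nothing to compare against. Both halves are handled the right way: the $\Delta$-system lemma plus amalgamation above an $\eta$ containing the root for the chain condition at limits, and the counting of supports of $\omega$ many maximal antichains (each of size ${<}\theta$, with $\theta$ regular uncountable) bounded below $\nu$ by $\cf(\nu)\geq\theta$ for the reals. The only imprecise spot is the successor step: $\Por_\xi$ does not in general force $|\dot A|=\theta$; the standard argument instead assumes the $(p_\alpha,\dot q_\alpha)$ form an antichain, observes that $\{\dot q_\alpha:\, \alpha\in\dot A\}$ is then forced to be an antichain in $\Qnm_\xi$, hence $|\dot A|<\theta$ is forced, and uses the $\theta$-cc of $\Por_\xi$ with the regularity of $\theta$ to bound $\dot A$ below some $\gamma<\theta$, contradicting $p_\gamma\Vdash\gamma\in\dot A$ --- your parenthetical ``thin out'' is gesturing at this, but as written it is not quite the argument.
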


FS iterations add Cohen reals, which is sometimes considered as a limitation of the method.

\begin{lemma}\label{addingCohen}
Assume that $\Por_\nu=\la\Por_\xi,\Qnm_\xi:\,  \xi<\nu\ra$ is a FS iteration of non-trivial posets. 
If $\xi<\nu$ and $\nu$ is limit, then $\Por_\nu$ adds a Cohen real over $V_\xi$.
\end{lemma}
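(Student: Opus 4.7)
The plan is to explicitly build a $\Por_\nu$-name $\dot c\in 2^\omega$ by exploiting two-way splitting at infinitely many iteration stages above $\xi$, and then verify Cohen-genericity over $V_\xi$ by the standard density test, using the finite-support structure of $\Por_\nu$ to freely modify cofinitely many of those stages.

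First I would fix a strictly increasing sequence $\xi\leq\xi_0<\xi_1<\cdots<\nu$, which exists because $\nu$ is a limit ordinal with $\xi<\nu$. At each $\xi_n$, non-triviality of $\Qnm_{\xi_n}$ yields $\Por_{\xi_n}$-names $\dot q^0_n,\dot q^1_n$ forced to be incompatible conditions in $\Qnm_{\xi_n}$. Define the $\Por_\nu$-name
\[\dot c(n)=\begin{cases}0 & \text{if $\dot q^0_n$ lies in the $\Qnm_{\xi_n}$-generic filter induced by $\dot G_\nu$,}\\ 1 & \text{otherwise.}\end{cases}\]
The default ``otherwise'' clause makes $\dot c(n)$ well-defined regardless of whether $\{\dot q^0_n,\dot q^1_n\}$ is maximal.

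To verify that $\dot c$ is forced Cohen over $V_\xi$, fix $p\in\Por_\nu$ and a $\Por_\xi$-name $\dot D$ of a dense open subset of $2^{<\omega}$; the goal is to produce $p'\leq p$ and $s\in 2^{<\omega}$ with $p'\Vdash\check s\in\dot D$ and $p'\Vdash\check s\subseteq\dot c$. Since $\supp(p)$ is finite, pick $N$ with $\xi_n\notin\supp(p)$ for all $n\geq N$. First extend $p$ to $p_0$ deciding $\dot c\restriction N=t\in 2^N$, modifying only coordinates ${\leq}\,\xi_{N-1}$, so that $\xi_n\notin\supp(p_0)$ still holds for $n\geq N$. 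Then refine $p_0\restriction\xi$ inside $\Por_\xi$ to decide some concrete $s\supseteq t$ with $s\in\dot D$, which is possible because $\dot D$ is forced dense. Finally set $p'(\xi_n):=\dot q^{s(n)}_n$ for $n\in[N,|s|)$ and leave the other coordinates equal to the previous refinement; since the modified stages lie outside the support of that refinement, $p'$ is a legitimate condition, and by construction $p'\Vdash\dot c\restriction|s|=s\in\dot D$.

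The only real subtlety is confirming that, at stage $\xi_n$ (for $n\geq N$), adjoining $\dot q^{s(n)}_n$ genuinely forces $\dot c(n)=s(n)$. For $s(n)=0$ this is immediate from the definition of $\dot c$; for $s(n)=1$ one uses that $\dot q^1_n$ forces its incompatible mate $\dot q^0_n$ out of the generic filter at stage $\xi_n$, thereby triggering the default clause. Everything else reduces to standard finite-support bookkeeping, which is precisely what the choice of $N$ was designed to isolate.
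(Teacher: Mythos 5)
Your argument is correct and is the standard one: the paper itself states \autoref{addingCohen} as a well-known fact and gives no proof, so there is nothing to diverge from. Your construction of $\dot c$ from forced-incompatible pairs $\dot q^0_n\perp\dot q^1_n$ at infinitely many coordinates $\xi_n\geq\xi$, together with the density argument that first decides $\dot c\restriction N$ below $\xi_{N-1}+1$, then finds $s\in\dot D$ inside $\Por_\xi$, and finally fills the untouched coordinates $\xi_n$ ($N\leq n<|s|$) with $\dot q^{s(n)}_n$, is exactly the classical proof, and your handling of the two subtleties (the default clause for well-definedness, and $\dot q^1_n$ forcing $\dot q^0_n$ out of the stage-$\xi_n$ generic) is right.
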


\begin{corollary}\label{Cohenlimit}
Let $\nu$ be a limit ordinal of uncountable cofinality and let $\Por_\nu=\la\Por_\xi,\Qnm_\xi:\,  \xi<\nu\ra$ be a FS iteration of non-trivial $\cf(\nu)$-posets. Then $\Por_\nu$ forces $\nu\leqT\Mg$. In particular, $\Por_\nu$ forces $\non(\Mwf)\leq\cf(\nu)\leq\cov(\Mwf)$. 
\end{corollary}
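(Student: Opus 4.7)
The plan is to exhibit, in $V_\nu$, an explicit Tukey connection $(\Psi_-,\Psi_+)\colon\nu\to\Mg$ built from the Cohen reals that the FS iteration generically adds at each intermediate stage. Since $\Por_\nu$ is a FS iteration of $\cf(\nu)$-cc posets and $\cf(\nu)$ is regular uncountable, \autoref{basicFS} yields that $\Por_\nu$ is $\cf(\nu)$-cc and $\omega^\omega\cap V_\nu=\bigcup_{\xi<\nu}\omega^\omega\cap V_\xi$. Since every element of $\Xi=\{f\colon 2^{<\omega}\to 2^{<\omega}:\,\forall s\,(s\subseteq f(s))\}$ is coded by a real, for each $f\in\Xi$ in $V_\nu$ there is a least ordinal $\rho(f)<\nu$ with $f\in V_{\rho(f)}$. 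On the other side, \autoref{addingCohen} applied to the tail iteration starting at stage $\xi$ yields a Cohen real $c_\xi\in 2^\omega$ over $V_\xi$ for each $\xi<\nu$; by \autoref{exm:Suslinccc}(1), $c_\xi$ is $\Mg$-unbounded over $V_\xi$, meaning $\neg(c_\xi\in^\bullet f)$ for every $f\in\Xi\cap V_\xi$.

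In $V_\nu$, define $\Psi_-\colon\nu\to 2^\omega$ by $\Psi_-(\xi):=c_\xi$ and $\Psi_+\colon\Xi\to\nu$ by $\Psi_+(f):=\rho(f)$. To verify that this is a Tukey connection $\nu\to\Mg$, fix $\xi<\nu$ and $f\in\Xi$ with $c_\xi\in^\bullet f$; I must show $\xi\leq\rho(f)$. If instead $\rho(f)<\xi$, then $f\in V_{\rho(f)}\subseteq V_\xi$, so the $\Mg$-unboundedness of $c_\xi$ over $V_\xi$ gives $\neg(c_\xi\in^\bullet f)$, a contradiction. Hence $\Por_\nu\Vdash\nu\leqT\Mg$.

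For the ``in particular'' clause, combine \autoref{cor:Tukeyval}(b), \autoref{basicdir}, and \autoref{ExmPrs}(1): the Tukey connection $\nu\leqT\Mg$ gives $\bfrak(\Mg)\leq\bfrak(\nu)$ and $\dfrak(\nu)\leq\dfrak(\Mg)$; the linear order $\nu$ has no maximum, so $\bfrak(\nu)=\dfrak(\nu)=\cf(\nu)$; finally $\bfrak(\Mg)=\non(\Mwf)$ and $\dfrak(\Mg)=\cov(\Mwf)$, producing the chain $\non(\Mwf)\leq\cf(\nu)\leq\cov(\Mwf)$. The only delicate point is the appeal to \autoref{basicFS} to guarantee $\rho(f)<\nu$: this is precisely where the hypothesis that every iterand be $\cf(\nu)$-cc (combined with the regularity of $\cf(\nu)$) is indispensable; the rest is a direct translation between the generic reals of the iteration and the relational system $\Mg$.
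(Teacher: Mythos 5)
Your proof is correct and follows essentially the same route as the paper: both use \autoref{addingCohen} to pick a Cohen real $c_\xi$ over $V_\xi$ at each stage, use \autoref{basicFS} to assign to each (code of a) meager set the stage $<\nu$ where it appears, and verify the Tukey connection by the same contrapositive argument. The only cosmetic differences are that you work directly with $\Mg$ rather than passing through $\Cbf_\Mwf$, and you index over all of $\nu$ rather than restricting to the cofinal set of limit ordinals as the paper does.
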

\begin{proof}
Since $L\eqT\nu$ for any cofinal subset $L$ of $\nu$, it is enough to show that $L^*\leqT\Cbf_\Mwf$ where $L^*$ is the set of limit ordinals smaller than $\nu$. For each $i\in L^*$ let $c_i\in V_{i+\omega}$ be a Cohen real over $V_i$ (which exists by \autoref{addingCohen}). Then the Tukey connection is given by the maps $i\mapsto c_i$ and $B\mapsto j_B$, the latter defined by: whenever $B$ is a Borel meager set of reals, $j_B\in L^*$ is chosen such that $B$ is coded in $V_{j_B}$ (which exists by \autoref{basicFS}).
\end{proof}

One starting point to force a statement of the form $\Cbf_{[I]^{<\theta}}\leqT\Rbf$ is the following result.

\begin{fact}\label{Cohenit}
Let $\Cor_I$ be the poset that adds Cohen reals indexed by $I$. If $I$ is uncountable then $\Cor_I$ forces $\Cbf_{[I]^{<\aleph_1}}\leqT\Mg$.
\end{fact}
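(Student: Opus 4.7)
The plan is to verify the Tukey connection directly in $V[G]$ by invoking \autoref{lem:TukeyCtheta}(b): it suffices to produce, in the extension, a family $\la x_i:\, i\in I\ra\subseteq 2^\omega$ such that for every $f\in\Xi$ the set $\{i\in I:\, x_i\in^\bullet f\}$ is countable. The natural candidates are the generic reals themselves, namely $x_i:=$ the $i$-th Cohen real added by $\Cor_I$.

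To analyze an arbitrary $f\in\Xi$ lying in $V[G]$, I would first use the standard ccc (in fact Knaster, product) argument: since $\Cor_I$ is the finite support product of copies of $\Cor$, any real of $V[G]$ is already in $V[G\restriction J]$ for some countable $J\subseteq I$. Fix such a countable $J=J_f$ with $f\in V[G\restriction J_f]$. Factoring $\Cor_I\cong \Cor_{J_f}\times\Cor_{I\setminus J_f}$, the restricted generic $G\restriction(I\setminus J_f)$ is $\Cor_{I\setminus J_f}$-generic over $V[G\restriction J_f]$, and in particular each coordinate $x_i$ for $i\in I\setminus J_f$ is a Cohen real over $V[G\restriction J_f]$.

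Now by \autoref{exm:Suslinccc}(1), Cohen reals are $\Mg$-unbounded over the model they are generic over; applied to $x_i$ over $V[G\restriction J_f]$, which contains $f$, this yields $\neg(x_i\in^\bullet f)$ for every $i\in I\setminus J_f$. Hence $\{i\in I:\, x_i\in^\bullet f\}\subseteq J_f$, which is countable, as required. The main technical point is the first step, i.e.\ ensuring that every $f\in\Xi$ in the extension is captured by a countable piece of the generic; this is where the product structure (and ccc) of $\Cor_I$ is essential, since without it the bound on $|J_f|$ would fail. Once this is in hand, the rest is an immediate application of the fact that Cohen reals avoid meager sets coded in the intermediate model, which is exactly the content of $\Mg$-unboundedness.
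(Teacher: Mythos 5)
Your proposal is correct and follows exactly the route of the paper, whose proof is the one-line instruction to apply \autoref{lem:TukeyCtheta}~(b) to the sequence of Cohen reals added by $\Cor_I$; you have simply supplied the standard details (capturing $f$ in a countable subproduct and using genericity of the remaining coordinates) that the paper leaves implicit.
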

\begin{proof}
Apply \autoref{lem:TukeyCtheta}~(b) to the sequence $\la c_i:\, i\in I\ra$ of Cohen reals added by $\Cor_I$.
\end{proof}

The following results illustrates the effect of adding cofinally many $\Rbf$-dominating reals along a FS iteration.

\begin{lemma}\label{thm:fullgen}
Let $\Rbf$ be a definable relational system of the reals, and let $\nu$ be a limit ordinal of uncountable cofinality.
If $\Por_\nu=\la\Por_\xi,\Qnm_\xi:\,  \xi<\nu\ra$ is a FS iteration of $\cf(\nu)$-cc posets that adds $\Rbf$-dominating reals cofinally often, then $\Por_\nu$ forces $\Rbf\leqT\nu$. 

In addition, if $\Rbf$ is a Prs and all iterands are non-trivial, then $\Por_\nu$ forces $\Rbf\eqT\Mg\eqT\nu$.
In particular, $\Por_\nu$ forces $\bfrak(\Rbf)=\dfrak(\Rbf)=\non(\Mwf)=\cov(\Mwf) =\cf(\nu)$.
\end{lemma}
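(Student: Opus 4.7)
The plan is, working in $V_\nu$, to build an explicit Tukey connection from $\Rbf$ into $\nu$ (viewed as a directed preorder) using the cofinal sequence of $\Rbf$-dominating reals, and then, in the Prs case, to sandwich $\Rbf$ Tukey-between $\Mg$ and $\nu$ by combining this with \autoref{Cohenlimit} and \autoref{Prsfact}.

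For the first statement, apply \autoref{basicFS} with $\theta=\cf(\nu)$ to conclude $\omega^\omega\cap V_\nu=\bigcup_{\xi<\nu}\omega^\omega\cap V_\xi$; since $\Rbf=\la X,Y,\sqsubset\ra$ is a definable relational system of the reals, the elements of $X^{V_\nu}$ are reals, each living in some $V_\alpha$ with $\alpha<\nu$. Using the cofinal-often hypothesis, choose a cofinal $L\subseteq\nu$ together with, for each $\alpha\in L$, a real $d_\alpha\in Y\cap V_{\alpha+1}$ which is $\Rbf$-dominating over $V_\alpha$. Define $\Psi_-\colon X\to L$ by $\Psi_-(x):=\min\{\alpha\in L:\,x\in V_\alpha\}$ and $\Psi_+\colon L\to Y$ by $\Psi_+(\alpha):=d_\alpha$. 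If $\Psi_-(x)\leq\alpha\in L$, then $x\in V_{\Psi_-(x)}\subseteq V_\alpha$, so $x\sqsubset d_\alpha=\Psi_+(\alpha)$, and $(\Psi_-,\Psi_+)$ is a Tukey connection $\Rbf\to L$. Since $L$ is cofinal in the limit ordinal $\nu$ of uncountable cofinality, $L\eqT\nu$ by \autoref{basicdir}, whence $V_\nu$ models $\Rbf\leqT\nu$.

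For the additional statement, \autoref{Prsfact} gives $\la X,\Mwf(X),\in\ra\leqT\Rbf$; composing with a Borel isomorphism between $X$ and $2^\omega$ yields $\Mg\leqT\Rbf$. When the iterands are non-trivial, \autoref{Cohenlimit} gives $\nu\leqT\Mg$ in $V_\nu$, so we obtain the chain $\Mg\leqT\Rbf\leqT\nu\leqT\Mg$, hence $\Rbf\eqT\Mg\eqT\nu$. The cardinal identities $\bfrak(\Rbf)=\dfrak(\Rbf)=\non(\Mwf)=\cov(\Mwf)=\cf(\nu)$ then follow from $\bfrak(\nu)=\dfrak(\nu)=\cf(\nu)$ (\autoref{basicdir}).

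The main technical delicacy is absoluteness: one needs that for $x\in X^{V_\nu}\cap V_\alpha$ with $\alpha$ large enough to contain the (ground-model) parameter coding the analytic set $X$, the element $x$ actually belongs to $X^{V_\alpha}$, so that the $\Rbf$-dominating property of $d_\alpha$ over $V_\alpha$ really applies to $x$. This is standard $\Sigma^1_1$-absoluteness and is implicit in the phrase \emph{definable relational system of the reals}; modulo this remark, the proof reduces to the bookkeeping above together with \autoref{basicFS}, \autoref{Cohenlimit}, and \autoref{Prsfact}.
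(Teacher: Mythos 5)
Your proposal is correct and follows essentially the same route as the paper: both build the Tukey connection $\Rbf\leqT L\eqT\nu$ by sending each real $x\in X^{V_\nu}$ to a stage of $L$ where it already appears and each $\alpha\in L$ to the $\Rbf$-dominating real added at that stage, and both then close the circle $\Mg\leqT\Rbf\leqT\nu\leqT\Mg$ via \autoref{Prsfact} and \autoref{Cohenlimit}. The only difference is that you spell out the appeal to \autoref{basicFS} and the absoluteness of membership in $X$, which the paper leaves implicit.
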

\begin{proof}
Let $L$ be the set of $\xi<\nu$ such that $\Qnm_\xi$ add an $\Rbf$-dominating real over $V_\xi$. Since $L$ is cofinal in $\nu$, $L\eqT\nu$. We show that, in $V_\nu$, $\Rbf\leqT L$. Consider the maps $F\colon X^{V_\nu}\to L$ such that $x\in V_{F(x)}$, and $G\colon L\to Y^{V_\nu}$ such that $G(\xi)$ is $\Rbf$-dominating over $V_\xi$. Clearly, $(F,G)$ is a Tukey connection.

The second part follows from \autoref{Prsfact} and \autoref{Cohenlimit}.
\end{proof}

In most of the cases, we force a Tukey connection between a definable relational system of the reals and some relational system $R$ fixed in the ground model. To calculate the cardinal characteristics in the extension, we need to know when $\bfrak(R)$ and $\dfrak(R)$ stay the same in generic extensions.

\begin{lemma}\label{cccgenext0}
Let $\theta>\aleph_0$ be a regular cardinal and let $R=\la A,B,\sqsubset\ra$ be a relational system. 
\begin{enumerate}[label=(\alph*)]
    \item If $V\models \dfrak(R)\geq\theta$ then, in any $\theta$-cc generic extension of $V$, $\dfrak(R)=\dfrak(R)^V$. 
    \item If $V\models \bfrak(R)\geq\theta$ then, in any $\theta$-cc generic extension of $V$, $\bfrak(R)=\bfrak(R)^V$.
\end{enumerate}
Here, $R$ is considered as the same object in both $V$ and in the generic extension (not an interpretation).
\end{lemma}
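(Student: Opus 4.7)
The plan is to prove (a) directly and then deduce (b) from (a) applied to the dual $\Rbf^\perp$.

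For (a), the easy inequality $\dfrak(R)^{V[G]} \leq \dfrak(R)^V$ holds because any $R$-dominating set in $V$ remains $R$-dominating in $V[G]$: since $A$, $B$, and $\sqsubset$ denote the same objects in both models, no new element of $A$ appears that needs to be dominated. For the nontrivial direction $\dfrak(R)^{V[G]} \geq \dfrak(R)^V$, I would argue by contradiction, assuming $\dot D = \{\dot b_\alpha : \alpha < \kappa\}$ is a name for an $R$-dominating set in $V[G]$ with $\kappa < \dfrak(R)^V$. The key step is the standard $\theta$-cc approximation: for each $\alpha$ fix a maximal antichain of conditions deciding $\dot b_\alpha$; by $\theta$-cc this antichain has size ${<}\theta$, so the corresponding ground-model set $B_\alpha \subseteq B$ of decided values satisfies $|B_\alpha| < \theta$ and $\Vdash \dot b_\alpha \in B_\alpha$.

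Setting $B' := \bigcup_{\alpha<\kappa} B_\alpha \in V$, the dominating property transfers: any $x \in A$ is dominated in $V[G]$ by some $\dot b_\alpha[G] \in B_\alpha \subseteq B'$, so $B'$ is $R$-dominating in $V[G]$; and since $A$, $B'$, and $\sqsubset$ all live in $V$ (as the same objects), $B'$ is $R$-dominating in $V$ as well. For the size bound $|B'| < \dfrak(R)^V$ I split cases: if $\kappa < \theta$, then $|B'| < \theta \leq \dfrak(R)^V$ by the regularity of $\theta$; if $\kappa \geq \theta$, then $|B'| \leq \kappa \cdot \theta = \kappa < \dfrak(R)^V$. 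Either case contradicts the minimality of $\dfrak(R)^V$.

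Part (b) follows immediately from (a) applied to the dual $\Rbf^\perp$: by \autoref{fct:dual}(d), $\dfrak(\Rbf^\perp) = \bfrak(\Rbf)$ in every model, so the hypothesis $\bfrak(\Rbf)^V \geq \theta$ is precisely $\dfrak(\Rbf^\perp)^V \geq \theta$; part (a) then yields $\dfrak(\Rbf^\perp)^{V[G]} = \dfrak(\Rbf^\perp)^V$, which is $\bfrak(\Rbf)^{V[G]} = \bfrak(\Rbf)^V$. The main subtlety is the cardinal arithmetic for $|B'|$, which forces the case split between $\kappa < \theta$ (where regularity of $\theta$ keeps the union of size ${<}\theta$) and $\kappa \geq \theta$ (where each $B_\alpha$ is swallowed by $\kappa$); without regularity of $\theta$ the lower-bound direction would not go through.
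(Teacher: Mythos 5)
Your proposal is correct and follows essentially the same route as the paper: both parts hinge on the standard $\theta$-cc covering of a small set of potential bounds in the extension by a ground-model set of size still below $\dfrak(R)^V$ (your case split on $\kappa<\theta$ versus $\kappa\geq\theta$ is exactly what makes the paper's phrase ``we can find $E'\in V$ of size ${<}\lambda$'' work), together with the absoluteness of the domination statement for a fixed $R$, and (b) obtained by dualizing. The only cosmetic difference is that you argue by contradiction from a name for a small dominating family, whereas the paper argues directly that every set $E\subseteq B$ of size ${<}\dfrak(R)^V$ in the extension fails to dominate; these are logically the same argument.
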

\begin{proof}
We show (a) (note that (b) follows by (a) applied to $R^\perp$).
In $V$, assume that $\lambda:=\dfrak(R)\geq\theta$ and that $D\subseteq B$ is an $R$-dominating family of size $\lambda$. Let $W$ be a $\theta$-cc generic extension of $V$. In $W$, it is clear that $D$ is $R$-dominating, so $\dfrak(R)^W\leq\lambda$. Now assume, in $W$, that $E\subseteq B$ has size ${<}\lambda$. Since $W$ is a $\theta$-cc generic extension of $V$ and $\lambda\geq\theta$, we can find $E'\in V$ of size ${<}\lambda$ such that $E\subseteq E'\subseteq B$. In $V$, $|E'|<\lambda=\dfrak(R)$, so $E'$ is not $R$-dominating, hence there is some $x\in X$ which is $R$-unbounded over $E'$. It is clear that, in $W$, $x$ is $R$-unbounded over $E$. This concludes that $\dfrak(R)^W\geq\lambda$.
\end{proof}

In our applications, $R$ will be a directed set like $[X]^{<\theta}\cap V$, or a relational system of the form $\Cbf_{[X]^{<\theta}\cap V}$. In terms of Tukey equivalence, this will be the same as looking at $[X]^{<\theta}$ and  $\Cbf_{[X]^{<\theta}}$, respectively, in the generic extension.

\begin{lemma}\label{cccgenext}
Let $\theta>\aleph_0$ be a regular cardinal and assume $|X|\geq\theta$. Then, in any $\theta$-cc generic extension, $[X]^{<\theta}\eqT[X]^{<\theta}\cap V$ and $\Cbf_{[X]^{<\theta}}\eqT\Cbf_{[X]^{<\theta}\cap V}$. Moreover, $\add([X]^{<\theta})=\add([X]^{<\theta})^V$, likewise for the other cardinal characteristics associated with the ideal $[X]^{<\theta}$.
\end{lemma}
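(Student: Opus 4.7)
The only forcing-theoretic ingredient needed is the standard consequence of the $\theta$-cc: for every $A\in[X]^{<\theta}$ in the generic extension, there exists $A^*\in[X]^{<\theta}\cap V$ with $A\subseteq A^*$. Once this covering property is in hand, everything else is an exercise in exhibiting Tukey connections via identity maps on one side and ``cover by a $V$-set'' on the other.

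First, I would prove $[X]^{<\theta}\eqT[X]^{<\theta}\cap V$. For $[X]^{<\theta}\cap V\leqT[X]^{<\theta}$ both maps are the identity: any $V$-set of size ${<}\theta$ remains a set of size ${<}\theta$ in the extension. For the reverse, fix (using the $\theta$-cc) a function $A\mapsto A^*$ sending each $A\in[X]^{<\theta}$ to some $A^*\in[X]^{<\theta}\cap V$ with $A\subseteq A^*$, and let the second coordinate be the identity from $[X]^{<\theta}\cap V$ to $[X]^{<\theta}$. If $A^*\subseteq B$ with $B\in[X]^{<\theta}\cap V$, then $A\subseteq B$, so this is a Tukey connection. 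Completely analogously, for $\Cbf_{[X]^{<\theta}}\eqT\Cbf_{[X]^{<\theta}\cap V}$ the inclusion $\Cbf_{[X]^{<\theta}}\leqT\Cbf_{[X]^{<\theta}\cap V}$ uses the identity on $X$ together with the inclusion $[X]^{<\theta}\cap V\hookrightarrow[X]^{<\theta}$, and the converse uses the identity on $X$ together with the covering map $B\mapsto B^*$ from the $\theta$-cc.

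For the ``moreover'' part, \autoref{ex:ideal<theta} computes the four cardinals of $[X]^{<\theta}$ in any model: $\add=\cf(\theta)=\theta$, $\non=\theta$, and (since $|X|\geq\theta$ and $\theta$ is regular) $\cov=|X|$; the $\cof$ equals $|X|$ under the mild assumption $|X|^{<\theta}=|X|$. Because the $\theta$-cc preserves $\theta$ and all cardinals $\geq\theta$, the value of $|X|$ is the same in $V$ and in the extension, so the values of $\add$, $\non$, and $\cov$ of $[X]^{<\theta}$ computed in $V$ agree with those computed in the extension. For $\cof$, we read it off directly from the Tukey equivalence $[X]^{<\theta}\eqT[X]^{<\theta}\cap V$ together with \autoref{cor:Tukeyval}, and likewise $\add$ and the covering/uniformity numbers agree by combining the two Tukey equivalences with \autoref{ex:trivialTukey}.

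The only potentially delicate point is verifying that the covering property ``every $A\in[X]^{<\theta}$ in the extension is contained in some $A^*\in V\cap[X]^{<\theta}$'' actually holds in a $\theta$-cc extension; this is standard (enumerate $A$ by some ordinal $\alpha<\theta$, take a $\Por$-name $\dot A=\{\la\check\beta,\dot x_\beta\ra:\beta<\alpha\}$, and for each $\beta$ let $X_\beta\in V$ be the set of possible values of $\dot x_\beta$ using a maximal antichain of conditions deciding $\dot x_\beta$, which has size $<\theta$ by the $\theta$-cc; then $A^*:=\bigcup_{\beta<\alpha}X_\beta$ has size $<\theta$ because $\theta$ is regular). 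Beyond that, the proof is bookkeeping with identity maps.
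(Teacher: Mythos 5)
Your treatment of the two Tukey equivalences matches the paper's proof, which rests on exactly the covering property you isolate (every $A\in[X]^{<\theta}$ of the extension lies inside some $A^*\in[X]^{<\theta}\cap V$) and then writes down the obvious connections. One small slip: for $[X]^{<\theta}\cap V\leqT[X]^{<\theta}$ the two maps cannot both be identities, since $\Psi_+$ must go from $[X]^{<\theta}$ into $[X]^{<\theta}\cap V$; you need the covering map $B\mapsto B^*$ there as well (and then $A\subseteq B\subseteq B^*$ for $A\in V$ closes the verification). This is cosmetic, as the correct map is already in your hands.

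The ``moreover'' is where you diverge from the paper and where there is a genuine gap. The paper deduces it by applying \autoref{cccgenext0} to the ground-model relational systems $[X]^{<\theta}\cap V$ and $\Cbf_{[X]^{<\theta}\cap V}$ (all four of their characteristics are ${\geq}\theta$ in $V$, so they are preserved) and then transferring through the Tukey equivalences. Your direct computation via \autoref{ex:ideal<theta} does handle $\add$, $\non$ and $\cov$, since these equal $\cf(\theta)=\theta$, $\theta$ and $|X|$ in both models and $\theta$-cc forcing preserves cardinals. But for $\cof$ the formula $\cof([X]^{<\theta})=|X|$ requires $|X|^{<\theta}=|X|$, which is \emph{not} among the hypotheses of the lemma, and your fallback --- ``read it off from the Tukey equivalence together with \autoref{cor:Tukeyval}'' --- only yields $\cof([X]^{<\theta})^{V[G]}=\dfrak\bigl([X]^{<\theta}\cap V\bigr)^{V[G]}$, i.e.\ an equality of two quantities computed \emph{in the extension}. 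The statement to be proved identifies the left side with the ground-model value $\cof([X]^{<\theta})^{V}=\dfrak\bigl([X]^{<\theta}\cap V\bigr)^{V}$, and the missing step is that no dominating family of smaller size for the ground-model ideal appears after forcing. That is exactly the content of \autoref{cccgenext0}(a) (cover a small new dominating family by a ground-model family of the same size using the $\theta$-cc, then use that the latter is not dominating in $V$); cite it, or reprove it, and the argument is complete.
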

\begin{proof}
This follows because, in any $\theta$-cc generic extension of $V$, any member of $A\in[X]^{<\theta}$ is contained in some member of $[X]^{<\theta}\cap V$. The ``moreover" is a consequence of \autoref{cccgenext0} applied to $[X]^{<\theta}\cap V$ and $\Cbf_{[X]^{<\theta}\cap V}$.
\end{proof}

The following result is a general criteria to force statements of the form $\Rbf\leqT\Cbf_{[X]^{<\theta}}$.

\begin{theorem}\label{itsmallsets}
Let $\Rbf=\la X,Y,\sqsubset\ra$ be a definable relational system of the reals, $\theta$ an uncountable regular cardinal, and let  $\Por_\nu=\la\Por_\xi,\Qnm_\xi:\,  \xi<\nu\ra$ be a FS iteration of $\theta$-cc posets with $\cf(\nu)\geq\theta$. Assume that, for all $\xi<\nu$ and any $A\in[X]^{<\theta}\cap V_\xi$, there is some $\eta\geq\xi$ such that $\Qnm_\eta$ adds an $\Rbf$-dominating real over $A$. Then $\Por_\nu$ forces $\Rbf\leqT\Cbf_{[X]^{<\theta}}\eqT\Cbf_{[X]^{<\theta}\cap V}\leqT[X]^{<\theta}\eqT[X]^{<\theta}$, in particular, $\theta\leq\bfrak(\Rbf)$ and $\dfrak(\Rbf)\leq|X|=\cfrak$.
\end{theorem}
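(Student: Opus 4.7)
The plan is to apply \autoref{lem:TukeyCtheta}~(a): forcing $\Rbf\leqT\Cbf_{[X]^{<\theta}}$ in $V_\nu$ is equivalent to showing that, in $V_\nu$, every $A\in[X]^{<\theta}$ is $\Rbf$-bounded. Once this is done, the rest of the chain is automatic from earlier results: $\Cbf_{[X]^{<\theta}}\eqT\Cbf_{[X]^{<\theta}\cap V}$ and $[X]^{<\theta}\eqT[X]^{<\theta}\cap V$ follow from \autoref{cccgenext} (since $\Por_\nu$ is $\theta$-cc by \autoref{basicFS}), while $\Cbf_{[X]^{<\theta}}\leqT[X]^{<\theta}$ is \autoref{ex:trivialTukey}. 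The cardinal bounds $\theta\leq\bfrak(\Rbf)$ and $\dfrak(\Rbf)\leq|X|\leq\cfrak$ are then the standard consequences of \autoref{lem:TukeyCtheta}~(a), using that $X$ is analytic in a Polish space so $|X^{V_\nu}|\leq\cfrak^{V_\nu}$.

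So fix, in $V_\nu$, a set $A\subseteq X^{V_\nu}$ with $|A|<\theta$; the task is to produce a single $y\in Y^{V_\nu}$ with $x\sqsubset y$ for all $x\in A$. The first step is to locate an intermediate stage that \emph{contains} $A$ as a subset. By \autoref{basicFS} (applicable since $\cf(\nu)\geq\theta>\aleph_0$), each real in $V_\nu$ lies in some $V_\xi$ with $\xi<\nu$; applying this to each $a\in A$ and invoking $|A|<\theta\leq\cf(\nu)$ to bound the supremum, we obtain $\eta_0<\nu$ with $A\subseteq V_{\eta_0}$. Since being in $X$ is absolute for analytic formulas, in fact $A\subseteq X^{V_{\eta_0}}$.

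The second, delicate step is to promote $A$ from a mere \emph{subset} of $V_{\eta_0}$ to an actual \emph{member} of some $V_\xi$, so that the hypothesis of the theorem can be applied. The quotient forcing $\Por_\nu/\Por_{\eta_0}$ is $\theta$-cc over $V_{\eta_0}$, so \autoref{cccgenext}, applied with $V_{\eta_0}$ playing the role of the ground model, furnishes $A'\in V_{\eta_0}$ with $A\subseteq A'\subseteq X^{V_{\eta_0}}$ and $|A'|^{V_{\eta_0}}<\theta$. Now $A'\in[X]^{<\theta}\cap V_{\eta_0}$, so the hypothesis of the theorem yields some $\eta'\geq\eta_0$ such that $\Qnm_{\eta'}$ adds an $\Rbf$-dominating real $y$ over $A'$; this $y\in V_{\eta'+1}\subseteq V_\nu$ satisfies $x\sqsubset y$ for every $x\in A'$, and in particular for every $x\in A$.

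The main obstacle is exactly this second step: the hypothesis of the theorem is stated for sets living in an initial segment $V_\xi$ of the iteration, whereas $A$ is given only in the final extension $V_\nu$. Neither of the two assumptions $\cf(\nu)\geq\theta$ nor ``$\Por_\nu$ is $\theta$-cc'' suffices alone --- the first pulls each \emph{element} of $A$ down to some $V_\xi$, while the second (via \autoref{cccgenext}) pulls the whole \emph{set} of size ${<}\theta$ down to a common $V_{\eta_0}$.
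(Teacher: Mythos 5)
Your proof is correct and follows essentially the same route as the paper's: reduce via \autoref{lem:TukeyCtheta}~(a) to showing every set of size ${<}\theta$ in $V_\nu$ is $\Rbf$-bounded, locate it at an intermediate stage using $\cf(\nu)\geq\theta$, and invoke the hypothesis; the remaining Tukey equivalences come from \autoref{cccgenext} and \autoref{ex:trivialTukey} exactly as in the paper. In fact you are slightly more careful than the paper, which baldly asserts ``$A\in V_\xi$ for some $\xi<\nu$'' where only $A\subseteq V_\xi$ is immediate --- your extra step of covering $A$ by some $A'\in[X]^{<\theta}\cap V_{\eta_0}$ via the $\theta$-cc of the quotient is precisely the justification the paper leaves implicit.
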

\begin{proof}
In $V_\nu$: for $A\in[X]^{<\theta}$ we have that $A\in V_\xi$ for some $\xi<\nu$, so it is $\Rbf$-bounded by the hypothesis. Hence $\Rbf\leqT\Cbf_{[X]^{<\theta}}$ by \autoref{lem:TukeyCtheta}. The rest follows by \autoref{ex:trivialTukey} and \autoref{cccgenext}.
\end{proof}

\begin{remark}\label{remCOB}
In connection with the theorem above, \cite{GKScicmax} defines the following property:
\begin{description}
\item[$\COB(\Rbf,\Por,\lambda,\vartheta)$] states that there are a directed preorder $S$ of size $\nu$ and with $\bfrak(S)\geq\lambda$, and a sequence $\la\dot y_s:\, s\in S\ra$ of $\Por$-names of members of $Y$ such that,
for every $\Por$-name $\dot x$ of a member of $X$, there exists an $s_{\dot x}\in S$ such that, for all $t \geq_S s_{\dot x}$, $\Vdash \dot x \sqsubset \dot y_t$.  
\end{description}
This property implies that there exist a directed preorder $S$ in the ground model such that 
$\lambda\leq\bfrak(S)$, $\dfrak(S)\leq\vartheta$ and $\Vdash_\Por\Rbf\leqT S$. Even more, equivalence holds when $\Por$ is $\lambda$-cc and $\lambda$ is uncountable regular.
\end{remark}

We conclude this section with general results to force statements of the form $\Cbf_{[I]^{<\mu}}\leqT\Rbf$. For this purpose, we restrict to Polish relational systems and use Judah's and Shelah's~\cite{JSpres} and Brendle's~\cite{Breciclar} preservation theory.

\begin{definition}\label{def:good}
Let $\Rbf=\la X, Y, \sqsubset\ra$ be a Prs and let $\theta$ be an infinite cardinal.
A poset $\Por$ is \textit{$\theta$-$\Rbf$-good} if, for any $\Por$-name $\dot{h}$ for a member of $Y$, there is a non-empty set $H\subseteq Y$ (in the ground model) of size ${<}\theta$ such that, for any $x\in X$, if $x$ is $\Rbf$-unbounded over  $H$ then $\Vdash x\nsqsubset \dot{h}$.

We say that $\Por$ is \textit{$\Rbf$-good} if it is $\aleph_1$-$\Rbf$-good.
\end{definition}

Good posets allow us to preserve the Tukey order as follows.

\begin{lemma}\label{goodpres}
Let $\theta$ be regular uncountable, and let $\Rbf$ be a Prs. Assume that $\Por$ is a $\theta$-cc $\theta$-$\Rbf$-good poset. If $\mu$ is a cardinal, $\cf(\mu)\geq\theta$, $|I|\geq\mu$ and $\Cbf_{[I]^{<\mu}}\leqT\Rbf$, then $\Por$ forces that $\Cbf_{[I]^{<\mu}}\leqT\Rbf$.
\end{lemma}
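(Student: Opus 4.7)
My plan is to exploit the characterization of $\Cbf_{[I]^{<\mu}}\leqT\Rbf$ from \autoref{lem:TukeyCtheta}(b) and show that the \emph{same} witness sequence in the ground model continues to witness the Tukey connection in the generic extension. Concretely, the ground-model hypothesis gives a sequence $\langle x_i:\, i\in I\rangle\subseteq X$ such that $|\{i\in I:\, x_i\sqsubset y\}|<\mu$ for every $y\in Y$. I will show that in $V^\Por$, for every name $\dot h$ of a member of $Y$, the set $\{i\in I:\, x_i\sqsubset\dot h\}$ is still forced to have size ${<}\mu$, which again by \autoref{lem:TukeyCtheta}(b) yields $\Vdash_\Por\Cbf_{[I]^{<\mu}}\leqT\Rbf$.

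The bridge from $V$ to $V^\Por$ is precisely the $\theta$-$\Rbf$-goodness hypothesis. Given a $\Por$-name $\dot h$ for a member of $Y$, goodness supplies a set $H\subseteq Y$ in $V$ with $|H|<\theta$ such that any $x\in X$ which is $\Rbf$-unbounded over $H$ satisfies $\Vdash_\Por x\not\sqsubset\dot h$. I would then set
\[J_{\dot h}:=\bigcup_{y\in H}\{i\in I:\, x_i\sqsubset y\}.\]
For each $i\in I\menos J_{\dot h}$, the real $x_i$ is $\Rbf$-unbounded over $H$ by construction, so $\Vdash_\Por x_i\not\sqsubset\dot h$. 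Hence $\Vdash_\Por \{i\in I:\, x_i\sqsubset\dot h\}\subseteq J_{\dot h}$.

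It remains to verify $|J_{\dot h}|<\mu$ and that this bound persists in $V^\Por$. In $V$, each summand $\{i\in I:\, x_i\sqsubset y\}$ has size ${<}\mu$, and the union runs over $|H|<\theta\leq\cf(\mu)$ indices; since the cofinality of $\mu$ strictly exceeds the number of summands, each of size ${<}\mu$, one gets $|J_{\dot h}|<\mu$. Because $\Por$ is $\theta$-cc and $\mu\geq\cf(\mu)\geq\theta$, the cardinal $\mu$ is preserved, so $|J_{\dot h}|^{V^\Por}\leq |J_{\dot h}|^V<\mu$ as well.

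The only real subtlety is this cardinal-arithmetic step: one needs the cofinality assumption $\cf(\mu)\geq\theta$ in order to bound the union (without it, ${<}\theta$ sets of size ${<}\mu$ could fill $\mu$), and one needs $\theta$-cc to prevent $\Por$ from collapsing $\mu$ and thereby spoiling the bound on $|J_{\dot h}|$ in the extension. Apart from this bookkeeping, the proof is just a clean translation of goodness through \autoref{lem:TukeyCtheta}(b), applied once to extract the witness in $V$ and once again to reassemble the Tukey connection in $V^\Por$.
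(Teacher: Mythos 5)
Your proof is correct and follows essentially the same route as the paper's: you take the ground-model witness sequence from \autoref{lem:TukeyCtheta}~(b), apply goodness to a name $\dot h$ to get $H$, and your $J_{\dot h}$ is exactly the paper's set $B:=\bigcup_{y'\in H}\{i\in I:\, x_i\sqsubset y'\}$. The extra care you take with the cardinal arithmetic ($|H|<\theta\leq\cf(\mu)$ bounding the union) and with the preservation of $\mu$ under the $\theta$-cc hypothesis is left implicit in the paper but is exactly the right justification.
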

\begin{proof}
Choose a sequence $\la x_i:\, i\in I \ra$ as in \autoref{lem:TukeyCtheta}~(b). We show that $\Por$ forces $|\{i\in I:\, x_i\sqsubset y\}|<\mu$ for all $y\in Y$. Let $\dot y$ be a $\Por$-name of a member of $Y$ and choose $H$ as in \autoref{def:good}. Let $B:=\bigcup_{y'\in H}\{i\in I:\, x_i\sqsubset y'\}$, so $|B|<\mu$. Since $\Por$ forces $x_i\sqsubset\dot y\Rightarrow i\in B$, then $\Por$ forces $|\{i\in I:\, x_i\sqsubset \dot y\}|<\mu$.
\end{proof}

We now present some examples of good posets. A general one is:

\begin{lemma}\label{smallgood}
If $\theta\geq\aleph_1$ regular and  $|\Por|<\theta$, then $\Por$ is $\theta$-$\Rbf$-good. In particular, Cohen forcing is $\Rbf$-good.
\end{lemma}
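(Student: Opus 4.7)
The plan is to construct the required set $H$ directly from the decided values of $\dot h$ along a suitable antichain. Given a $\Por$-name $\dot h$ for a member of $Y$, the set of conditions $r\in\Por$ that force $\dot h$ to equal some specific element of $Y$ is open dense in $\Por$, so I would pick a maximal antichain $A\subseteq\Por$ consisting entirely of such conditions. For each $r\in A$ fix $y_r\in Y$ with $r\Vdash\dot h=\check y_r$, and set $H:=\{y_r:\, r\in A\}$. Since $A$ is non-empty and $|A|\leq|\Por|<\theta$, this makes $H$ a non-empty subset of $Y$ of size ${<}\theta$, as the definition of $\theta$-$\Rbf$-good requires.

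To verify the goodness condition I would argue by contraposition. Assume that $x\in X$ is such that some $p\in\Por$ forces $x\sqsubset\dot h$; the task is to find $y\in H$ with $x\sqsubset y$. By the maximality of $A$, pick $r\in A$ compatible with $p$ and take any common extension $q\leq p,r$. Then $q\Vdash\dot h=\check y_r$ together with $q\Vdash x\sqsubset\dot h$ yields $q\Vdash x\sqsubset\check y_r$. Because $\Rbf$ is a Prs, the third clause of \autoref{DefPolish} makes $\sqsubset$ an $F_\sigma$ (hence Borel) subset of $X\times Z$, so the statement $x\sqsubset y_r$ with its two ground-model parameters is absolute between $V$ and any generic extension. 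Hence $x\sqsubset y_r$ already holds in $V$, contradicting that $x$ is $\Rbf$-unbounded over $H$.

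The ``in particular'' clause follows at once: Cohen forcing $\Cor$ is countable, so $|\Cor|=\aleph_0<\aleph_1$, and instantiating $\theta=\aleph_1$ gives that $\Cor$ is $\aleph_1$-$\Rbf$-good, i.e.\ $\Rbf$-good. The only step that might seem delicate is the absoluteness of $\sqsubset$ in the middle paragraph, but this is entirely standard once one observes that $\sqsubset$ is $F_\sigma$ on $X\times Z$ with both parameters $x$ and $y_r$ lying in the ground model; so no real obstacle arises, and the whole argument is essentially bookkeeping on top of the ``choose an antichain deciding $\dot h$'' idea.
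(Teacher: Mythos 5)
There is a genuine gap at the very first step, and it is fatal to the construction. You claim that the set of conditions $r\in\Por$ forcing $\dot h$ to equal a specific element of $Y$ is open dense in $\Por$. But $Y$ is an analytic subspace of a Polish space, so a member of $Y$ is (coded by) a real, and a $\Por$-name $\dot h$ for a member of $Y$ is a name for a real. Whenever $\Por$ adds reals that land in $Y$ --- which is exactly the interesting case, e.g.\ $\Por=\Cor$ and $\Rbf=\baire$ with $\dot h$ the name of the Cohen real, or $\Rbf=\Lc^*$ with $\dot h$ a name for a new slalom --- \emph{no} condition decides the entire value of $\dot h$, so your dense set is empty and the antichain $A$, hence the set $H$, cannot be formed. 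The subsequent absoluteness argument for the $F_\sigma$ relation $\sqsubset$ is fine as far as it goes, but it never gets to be used because $H$ does not exist. Note also that your argument nowhere uses the hypothesis $|\Por|<\theta$ in an essential way beyond bounding $|A|$; a correct proof must use the smallness of $\Por$ together with clause~(iii) of \autoref{DefPolish}.

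The paper itself only cites \cite[Lemma~4]{M} for this statement, but the standard argument behind that citation repairs exactly the step you skipped: one does not try to decide $\dot h$ globally, but instead works with the decomposition $\sqsubset=\bigcup_{n<\omega}\sqsubset_n$ into closed slices. For each condition $q\in\Por$ and each $n<\omega$ one considers the ground-model set $D_{q,n}:=\{x\in X:\, q\Vdash x\sqsubset_n\dot h\}$, which is closed because $\sqsubset_n$ is closed, and uses the countable basis of the Polish space together with the finitely-much-information-at-a-time approximation of $\dot h$ below $q$ to produce a single $y_{q,n}\in Y$ in the ground model with $D_{q,n}\subseteq\{x:\, x\sqsubset y_{q,n}\}$ (this is where the structure of the particular Prs enters; compare the explicit computation of $y_q(k)$ as a minimum of decided values in the case $\Rbf=\baire$). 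Setting $H:=\{y_{q,n}:\, q\in\Por,\ n<\omega\}$ gives $|H|\leq|\Por|\cdot\aleph_0<\theta$ by regularity of $\theta$, and if some $p$ forces $x\sqsubset\dot h$ then some $q\leq p$ and $n$ satisfy $q\Vdash x\sqsubset_n\dot h$, whence $x\sqsubset y_{q,n}$ --- which is the contrapositive you were aiming for. I suggest you rebuild your proof along these lines.
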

\begin{proof}
See e.g.~\cite[Lemma~4]{M}.
\end{proof}

\begin{example}\label{exm:good}
We indicate the type of posets that are good for the Prs of Cicho\'n's diagram, namely, those of \autoref{ExmPrs}.
\begin{enumerate}[label=(\arabic*)]
    \item Miller~\cite{Mi} showed that $\Eor$ is $\omega^\omega$-good. Also, random forcing is $\omega^\omega$-good.
    More generally, any $\mu$-$\mathrm{Fr}$-linked poset is $\mu^+$-$\Dbf$-good (see~\cite{Mvert,BCM} for details).
    \item Any $\mu$-centered poset is $\mu^+$-$\Cn$-good (see e.g.~\cite{Breciclar}). In particular, $\Eor$ and $\Dor$ are $\Cn$-good.
    
    \item Any $\mu$-centered poset is $\mu^+$-$\Lc^*$-good (see~\cite{Breciclar,JSpres}), so, in particular, $\Eor$ and $\Dor$ are $\Lc^*$-good.
    
    Besides,  Kamburelis~\cite{Ka} showed that any Boolean algebra with a sfam (strict finitely additive measure) is $\Lc^*$-good. In particular, any subalgebra of random forcing is $\Lc^*$-good.
\end{enumerate}
\end{example}

Good posets are preserved along FS iterations as follows.

\begin{theorem}\label{sizeforbd}
Any FS iteration of $\theta$-cc $\theta$-$\Rbf$-good posets is again $\theta$-$\Rbf$-good, when $\theta$ is regular uncountable.
\end{theorem}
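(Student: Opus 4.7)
The plan is to prove the theorem by induction on the length $\nu$ of the iteration. The base case $\nu=0$ is trivial since the identity forcing is vacuously $\theta$-$\Rbf$-good, and \autoref{basicFS} already gives that $\Por_\nu$ is $\theta$-cc at every stage. The real engine is the successor case, which I work out carefully; the limit cases will then follow either by a direct $\theta$-cc absorption argument or, in one subcase, by a more delicate combination.

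For the successor step $\nu=\eta+1$, suppose by induction that $\Por_\eta$ is $\theta$-$\Rbf$-good, and recall $\Vdash_{\Por_\eta}$ ``$\Qnm_\eta$ is $\theta$-$\Rbf$-good''. Let $\dot h$ be a $\Por_{\eta+1}$-name for a member of $Y$, viewed as a $\Por_\eta$-name for a $\Qnm_\eta$-name. Working in $V_\eta$, goodness of $\Qnm_\eta$ gives a $\Por_\eta$-name $\dot H^*$ for a subset of $Y$ of size ${<}\theta$ such that any $x\in X$ which is $\Rbf$-unbounded over $\dot H^*$ satisfies $\Vdash_{\Qnm_\eta}x\nsqsubset\dot h$. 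Since $\Por_\eta$ is $\theta$-cc, by regularity of $\theta$ we can pick some $\delta^*<\theta$ in the ground model such that $\Vdash_{\Por_\eta}|\dot H^*|\le\delta^*$, and enumerate $\dot H^*=\{\dot h_\alpha:\,\alpha<\delta^*\}$ via $\Por_\eta$-names. Now apply the induction hypothesis to each $\dot h_\alpha$: obtain $H_\alpha\in V$ of size ${<}\theta$ such that any $x$ $\Rbf$-unbounded over $H_\alpha$ is forced by $\Por_\eta$ to satisfy $x\nsqsubset\dot h_\alpha$. Set $H:=\bigcup_{\alpha<\delta^*}H_\alpha$; by regularity of $\theta$, $|H|<\theta$. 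A routine verification shows that if $x$ is $\Rbf$-unbounded over $H$, then $\Vdash_{\Por_\eta}$ ``$x$ is $\Rbf$-unbounded over $\dot H^*$'', and hence $\Vdash_{\Por_{\eta+1}}x\nsqsubset\dot h$, as required.

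For the limit case, by \autoref{basicFS} $\Por_\nu$ is $\theta$-cc, so any $\Por_\nu$-name $\dot h$ for an element of $Y$ is equivalent to a nice name whose total support $S\subseteq\nu$ has $|S|<\theta$ (use $\omega$ many antichains, each of size ${<}\theta$, each condition finitely supported, and regularity of $\theta$). If $\cf(\nu)\ge\theta$, then $S$ is bounded in $\nu$ by some $\eta<\nu$, so $\dot h$ is essentially a $\Por_\eta$-name and the induction hypothesis applied to $\Por_\eta$ gives the desired $H$. The genuinely hard subcase is $\cf(\nu)<\theta$, where $S$ may be cofinal; this is the main obstacle. The plan there is to fix a cofinal sequence $\la\nu_\alpha:\,\alpha<\delta\ra$ in $\nu$ with $\delta:=\cf(\nu)<\theta$, and, using the $F_\sigma$ decomposition $\sqsubset=\bigcup_n\sqsubset_n$ from the Prs structure (with each $(\sqsubset_n)^y$ closed nowhere dense) together with the fact that each nice-name antichain element for $\dot h$ lies in some $\Por_{\nu_\alpha}$, reduce the control of $\dot h$ to countably many finitary approximations coming from $\Por_{\nu_\alpha}$-fragments. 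Apply the induction hypothesis to each $\Por_{\nu_\alpha}$ to obtain $H_\alpha\in V$ of size ${<}\theta$ witnessing goodness for the $\alpha$-th fragment, and take $H:=\bigcup_{\alpha<\delta}H_\alpha$; regularity of $\theta$ and $\delta<\theta$ keep $|H|<\theta$. The bulk of the technical work is to verify that this $H$ really controls all of $\dot h$ and not merely its fragments — this is where the uniformity inherent in the Prs structure (in particular the $F_\sigma$ decomposition) is crucial.
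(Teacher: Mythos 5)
Your induction skeleton, your successor step, and your limit case of cofinality ${\geq}\theta$ are all correct and match the standard argument (the paper itself gives no proof here but cites \cite[Thm.~4.15]{CM19}, whose proof has exactly this structure): for the composition $\Por_\eta\ast\Qnm_\eta$ one shrinks the name $\dot H^*$ produced by goodness of $\Qnm_\eta$ in $V_\eta$ to ${<}\theta$ many $\Por_\eta$-names of members of $Y$ using the $\theta$-cc and regularity of $\theta$, applies goodness of $\Por_\eta$ to each, and takes the union; and when $\cf(\nu)\geq\theta$ a nice name for a member of $Y$ has support of size ${<}\theta$, hence bounded, so the induction hypothesis applies directly.

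The gap is the remaining limit case $\cf(\nu)<\theta$ (in particular $\cf(\nu)=\omega$), which is the technical heart of the theorem, and your proposal does not prove it: you describe a plan and explicitly defer ``the bulk of the technical work.'' As written, the plan is not executable, because the objects to which you want to apply the induction hypothesis have not been defined. There is no canonical ``$\alpha$-th fragment'' of $\dot h$: a $\Por_\nu$-name $\dot h$ for a member of $Y$ whose support is cofinal in $\nu$ does not restrict to a $\Por_{\nu_\alpha}$-name for a member of $Y$, and a maximal antichain of a nice name for $\dot h$ need not lie in any $\Por_{\nu_\alpha}$. The entire content of the theorem at this step is to \emph{manufacture}, from $\dot h$, a family of ${<}\theta$ many earlier-stage names of members of $Y$ such that controlling those names controls $\dot h$; this is where one must use that $\sqsubset$ decomposes as an increasing union of \emph{closed} relations $\sqsubset_n$ (\autoref{DefPolish}), so that $x\sqsubset_n \dot h$ is witnessed by finite approximations that are decided by single conditions lying in some $\Por_{\nu_\alpha}$, and where one must then verify that if $x$ is $\Rbf$-unbounded over the union of the resulting sets $H_\alpha$ then no condition forces $x\sqsubset_n\dot h$ for any $n$. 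Since none of this construction or verification is carried out, and it cannot be reduced to the successor case or to the bounded-support case, the proposal does not constitute a proof of the statement.
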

\begin{proof}
See e.g.~\cite[Thm.~4.15]{CM19}.
\end{proof}

As a consequence, we get the following main result.

\begin{theorem}[Fuchino and the second author]\label{PreEUB}
Let $\theta$ be an uncountable regular cardinal.
If $\Por=\la\Por_\xi,\Qnm_\xi:\,  \xi<\nu\ra$ is a FS iteration of $\theta$-cc $\theta$-$\Rbf$-good posets, and $\nu\geq\theta$, 
then $\Por$ forces $\Cbf_{[\nu]^{<\theta}}\leqT\Rbf$. 

In particular, $\Por$ forces $\vartheta\leqT\Rbf$ for any regular $\theta\leq\vartheta\leq|\nu|$.
\end{theorem}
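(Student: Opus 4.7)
My plan is to verify the criterion in \autoref{lem:TukeyCtheta}(b) by exhibiting, in $V[G]$, a sequence $\la x_\alpha:\,\alpha<\nu\ra\subseteq X$ with $|\{\alpha<\nu:\,x_\alpha\sqsubset y\}|<\theta$ for every $y\in Y$. For the candidates I would take Cohen reals added by the iteration: after WLOG assuming all iterands are non-trivial (replacing trivial ones by Cohen forcing, which is $\theta$-$\Rbf$-good by \autoref{smallgood}), \autoref{addingCohen} gives, at each limit $\delta<\nu$, a Cohen real $c_\delta\in V_\delta$ that is Cohen over every $V_\xi$ with $\xi<\delta$; by \autoref{Prsfact} each such $c_\delta$ is then $\Rbf$-unbounded over all such $V_\xi$. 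Since $\nu\geq\theta\geq\aleph_1$ has $|\nu|$-many limits below it (and, when $\nu>\theta$, also $|\nu|$-many limits of cofinality $\geq\theta$), I enumerate a strictly increasing cofinal sequence $\la\gamma_\alpha:\,\alpha<\nu\ra$ of suitable limits and set $x_\alpha:=c_{\gamma_\alpha}$.

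The verification uses $\theta$-$\Rbf$-goodness of the tail forcings, which holds by \autoref{sizeforbd}. Given a $\Por$-name $\dot y$ for $y\in Y$, I apply $\theta$-$\Rbf$-goodness of $\Por/G_{\gamma_\alpha}$ over $V_{\gamma_\alpha}$ to the tail name of $\dot y$, obtaining a set $H_\alpha\subseteq Y$ in $V_{\gamma_\alpha}$ of size $<\theta$ such that every $x\in V_{\gamma_\alpha}\cap X$ which is $\Rbf$-unbounded over $H_\alpha$ is forced by the tail (and hence by $\Por$) to satisfy $x\not\sqsubset\dot y$. When $\cf(\gamma_\alpha)\geq\theta$, the $\theta$-cc of $\Por_{\gamma_\alpha}$ guarantees $H_\alpha\subseteq V_\xi$ for some $\xi<\gamma_\alpha$, and the Cohen-ness of $x_\alpha=c_{\gamma_\alpha}$ over $V_\xi$ together with \autoref{Prsfact} makes $x_\alpha$ $\Rbf$-unbounded over $H_\alpha$; hence $\Por\Vdash x_\alpha\not\sqsubset\dot y$ for such $\alpha$.

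The main obstacle is bounding the ``bad'' set of remaining $\alpha$'s. Here I would invoke the $\theta$-cc to show that $\dot y$ has support of size $<\theta$ in $\nu$ (in fact $\aleph_0$, since names of reals have countable support in FS iterations of ccc posets), so when $\cf(\nu)\geq\theta$ the name $\dot y$ is effectively a $\Por_\eta$-name for some $\eta<\nu$; for $\alpha$ with $\gamma_\alpha>\eta$, $c_{\gamma_\alpha}$ is Cohen over $V_\eta\ni y$ and so $x_\alpha\not\sqsubset y$ by \autoref{Prsfact}. With a careful indexing of the $\gamma_\alpha$'s (sufficiently spaced, using ordinals of cofinality $\geq\theta$ when available) one shows the exceptional set $\{\alpha:\,\gamma_\alpha\leq\eta\}$ has size $<\theta$; the edge case $\cf(\nu)<\theta$ reduces to the main case via Tukey invariance, $\Cbf_{[\nu]^{<\theta}}\eqT\Cbf_{[|\nu|]^{<\theta}}$. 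The ``In particular'' clause is then immediate: by \autoref{impEUB} we have $\vartheta\leqT\Cbf_{[\nu]^{<\theta}}$ for any regular $\vartheta\in[\theta,|\nu|]$, and composing with $\Cbf_{[\nu]^{<\theta}}\leqT\Rbf$ yields $\vartheta\leqT\Rbf$.
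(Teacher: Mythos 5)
There is a genuine gap, and it sits exactly where goodness enters your argument. You want each witness $x_\alpha=c_{\gamma_\alpha}$ to satisfy two requirements simultaneously: (i) $x_\alpha\in X\cap V_{\gamma_\alpha}$, so that the goodness of the tail over $V_{\gamma_\alpha}$ (whose conclusion only concerns reals of $V_{\gamma_\alpha}$) applies to it; and (ii) $x_\alpha$ is Cohen over a model containing $H_\alpha$ elementwise, so that it is $\Rbf$-unbounded over $H_\alpha$. These are incompatible. If $\cf(\gamma_\alpha)\geq\theta$, then by \autoref{basicFS} every real of $V_{\gamma_\alpha}$ already lies in some $V_{\xi'}$ with $\xi'<\gamma_\alpha$, so there is no real in $V_{\gamma_\alpha}$ that is Cohen over every $V_\xi$, $\xi<\gamma_\alpha$; since $H_\alpha$ depends on $\dot y$ and its elements can appear at stages cofinally below $\gamma_\alpha$, a fixed $x_\alpha\in V_{\gamma_\alpha}$ cannot be guaranteed unbounded over $H_\alpha$. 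If instead $\cf(\gamma_\alpha)=\omega$ (the only case where \autoref{addingCohen} really yields a single $c_{\gamma_\alpha}\in V_{\gamma_\alpha}$ Cohen over all earlier models), then $H_\alpha$ may contain reals of $V_{\gamma_\alpha}\setminus\bigcup_{\xi<\gamma_\alpha}V_\xi$ — for instance a code for a meager set containing $c_{\gamma_\alpha}$ — and again unboundedness over $H_\alpha$ fails. Your fallback via supports cannot rescue this: for $y\in V_\eta$ it only handles $\alpha$ with $\gamma_\alpha>\eta$, and when $|\nu|>\theta$ any strictly increasing cofinal enumeration of length $|\nu|$ must have $\geq\theta$ many terms below some $\eta<\nu$, so no ``spacing'' makes the exceptional set small. (When $|\nu|=\theta$ and $\cf(\nu)=\theta$ the support argument alone does work, but that is not the general case.)

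The intended use of goodness is different: one applies \autoref{goodpres} to an entire family rather than goodness to a single real. Given a family $\la x_i:\, i\in I\ra$ already known, in some intermediate model $W$, to witness $\Cbf_{[I]^{<\theta}}\leqT\Rbf$, goodness of the remaining (quotient) forcing over $W$ produces for each name $\dot y$ a set $H\subseteq Y\cap W$ of size ${<}\theta$ with $\{i:\, x_i\sqsubset\dot y\}\subseteq\bigcup_{y'\in H}\{i:\, x_i\sqsubset y'\}$, and the latter has size ${<}\theta$ because $\theta$ is regular and each piece is small \emph{by the witnessing property in $W$} — no unboundedness of any individual $x_i$ over $H$ is needed. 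This is exactly how the paper proceeds, and in fact it only proves the special case in which the first $\lambda=|\nu|$ iterands are Cohen: then \autoref{Cohenit}, \autoref{ex:tukeysmall} and \autoref{Prsfact} give $\Cbf_{[\nu]^{<\theta}}\leqT\Rbf$ in $V_\lambda$, and \autoref{sizeforbd} plus \autoref{goodpres} preserve this through the tail. Your ``in particular'' clause via \autoref{impEUB} is fine, but the main body of your argument needs to be restructured around \autoref{goodpres} (applied to quotients over intermediate extensions, with an induction on the stage) rather than around per-index goodness sets $H_\alpha$.
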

\begin{proof}
We only prove the particular case when $\nu=\lambda+\delta$ where $\lambda:=|\nu|$ and $\Qnm_\xi=\Cor$ for all $\xi<\lambda$. By \autoref{Cohenit}, $\Por_\lambda$ forces $\Cbf_{[\nu]^{<\aleph_1}}\leqT\Mg$, which implies $\Cbf_{[\nu]^{<\theta}}\leqT\Rbf$ by \autoref{ex:tukeysmall} and \autoref{Prsfact}. Since the remaining of the iteration is $\theta$-cc and $\theta$-$\Rbf$-good, by \autoref{goodpres} $\Por$ forces $\Cbf_{[\nu]^{<\theta}}\leqT\Rbf$. 

The ``in particular" follows by \autoref{impEUB}.
\end{proof}

\begin{remark}
In connection with the previous result, \cite{GKScicmax} defines the following property when $\vartheta$ is a limit ordinal:\footnote{The original notation is $\LCU$. The notation $\EUB$ (eventually unbounded) comes from~\cite{Bremodern}.}
\begin{description}
\item[$\EUB(\Rbf,\Por,\vartheta)$] states that there is a sequence $\la\dot x_\alpha:\alpha<\vartheta\ra$ of $\Por$-names of members of $X$ such that, for every $\Por$-name $\dot y$ of a member of $Y$, there exists an $\alpha_{\dot y}<\vartheta$ such that $\forall\, \beta \geq \alpha_{\dot y}\colon \Vdash\neg(\dot x_\beta \sqsubset \dot y)$.
\end{description}
This property is equivalent to $\COB(\Rbf^\perp,\Por,\cf(\vartheta),\cf(\vartheta))$, so it implies $\Vdash\vartheta\leqT\Rbf$ (see \autoref{remCOB}). In fact, equivalence holds when $\Por$ is $\cf(\vartheta)$-cc.
\end{remark}

\section{Applications to the left side}\label{aply}

This section is dedicated to forcing many values in Cicho\'n's diagram, particularly for the left side, by applying the methods of the previous sections.

From now on, we denote the Prs introduced in \autoref{ExmPrs} by $\Rbf_1:=\Lc^*\eqT\Nwf$, $\Rbf_2:=\Cn\eqT\Cbf_\Nwf^\perp$, $\Rbf_3:=\baire$, and $\Rbf_4:=\Mg\eqT\Cbf_\Mwf$.

\subsection{Warming up}

In this section, we present the effect on Cicho\'n's diagram after the FS iteration of the posets of \autoref{exm:Suslinccc}. We fix a cardinal $\lambda=\lambda^{\aleph_0}$.

\subsubsection*{Cohen forcing} 

After iterating Cohen forcing $\lambda$-many times, we obtain $\Cor_\lambda$. This forces $\lambda=\cfrak$ and, by \autoref{Cohenit}, $\Cbf_{[\lambda]^{<\aleph_1}}\leqT\Rbf_4$. On the other hand $\Rbf_1\leqT\Cbf_{[\R]^{<\aleph_1}}\eqT\Cbf_{[\lambda]^{<\aleph_1}}$ (see \autoref{fig:cichontukey}). Therefore, $\Cor_\lambda$ forces
$\Rbf_i\eqT \Cbf_{[\lambda]^{<\aleph_1}}$ for all $1\leq i\leq 4$. In particular, it forces the constellation of \autoref{EffectC}.

\begin{figure}[H]
\begin{center}
  \includegraphics[scale=1]{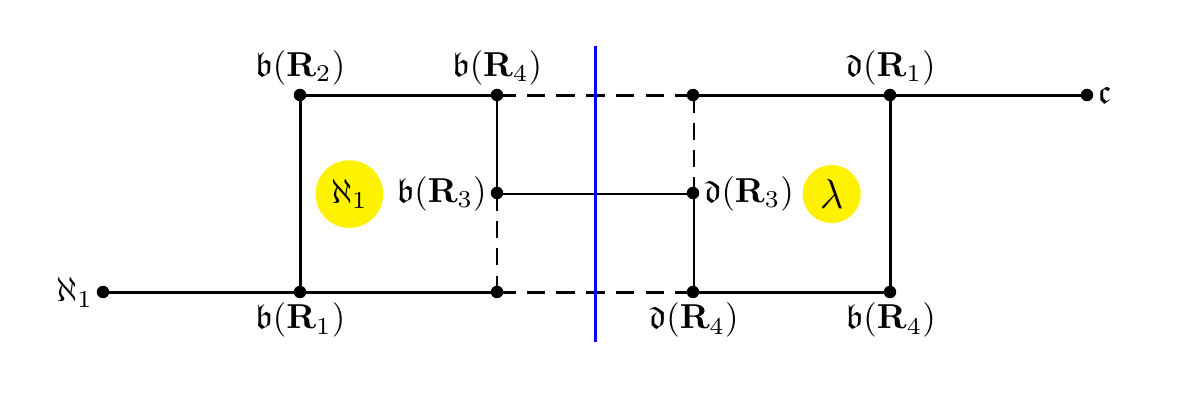}
  \caption{Cicho\'n's diagram constellation in Cohen's model.}
  \label{EffectC}
\end{center}
\end{figure}

\subsubsection*{Random forcing} 

Let $\Por$ be the FS iteration of $\Bor$ of length $\lambda$. Then $\Por$ forces
\begin{enumerate}[label=(\roman*)]
    \item $\Rbf_2\eqT\Rbf_4\eqT\lambda$, hence $\cov(\Nwf)=\non(\Mwf)=\cov(\Mwf)=\non(\Nwf)=\cf(\lambda)$; and
    \item $\Rbf_1\eqT\Rbf_3\eqT\Cbf_{[\lambda]^{<\aleph_1}}\eqT\Cbf_{[\R]^{<\aleph_1}}$, hence $\add(\Nwf)=\bfrak=\aleph_1$ and $\dfrak=\cof(\Nwf)=\cfrak=\lambda$.
\end{enumerate}
In particular, when $\lambda$ is regular, $\Por$ forces \autoref{EffectB}.
\begin{figure}[ht]
\begin{center}
  \includegraphics[scale=1]{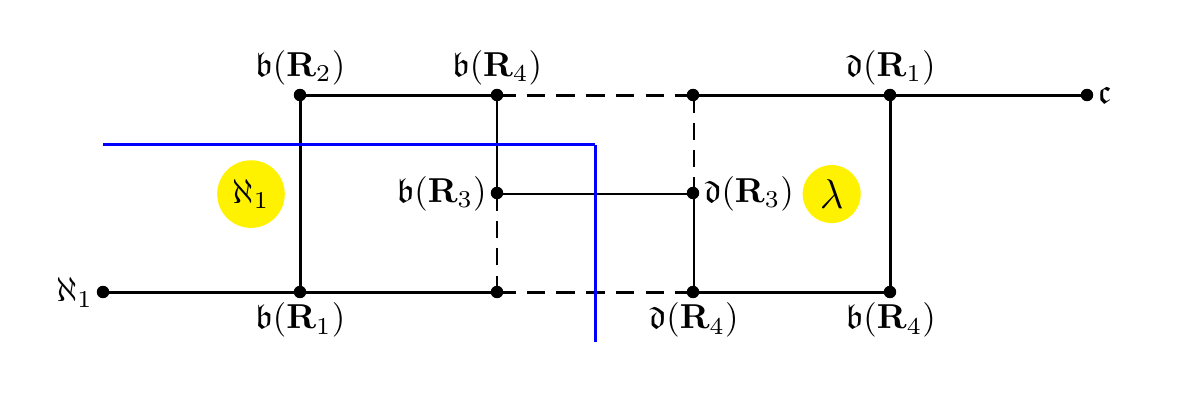}
  \caption{Constellation of Cicho\'n's diagram after a FS iteration of random forcing of length $\lambda$ regular.}
  \label{EffectB}
\end{center}
\end{figure}

Indeed, (i) follows by \autoref{thm:fullgen}, while (ii) follows by \autoref{exm:good}~(1) and by \autoref{PreEUB}, also because $\Por$ forces $\cfrak=\lambda$.

\subsubsection*{Eventually different reals forcing} 

Let $\Por$ be the FS iteration of $\Eor$ of length $\lambda$.  Then $\Por$ forces
\begin{enumerate}[label=(\roman*)]
    \item $\Rbf_4\eqT\lambda$, so $\non(\Mwf)=\cov(\Mwf)=\cf(\lambda)$; and
    \item $\Rbf_i\eqT\Cbf_{[\lambda]^{<\aleph_1}}\eqT\Cbf_{[\R]^{<\aleph_1}}$ for $1\leq i\leq3$, hence $\cov(\Nwf)=\bfrak=\aleph_1$ and $\dfrak=\non(\Nwf)=\cfrak=\lambda$.
\end{enumerate}
In particular, when $\lambda$ is regular, $\Por$ forces \autoref{EffectE}.

\begin{figure}[H]
\begin{center}
  \includegraphics[scale=1]{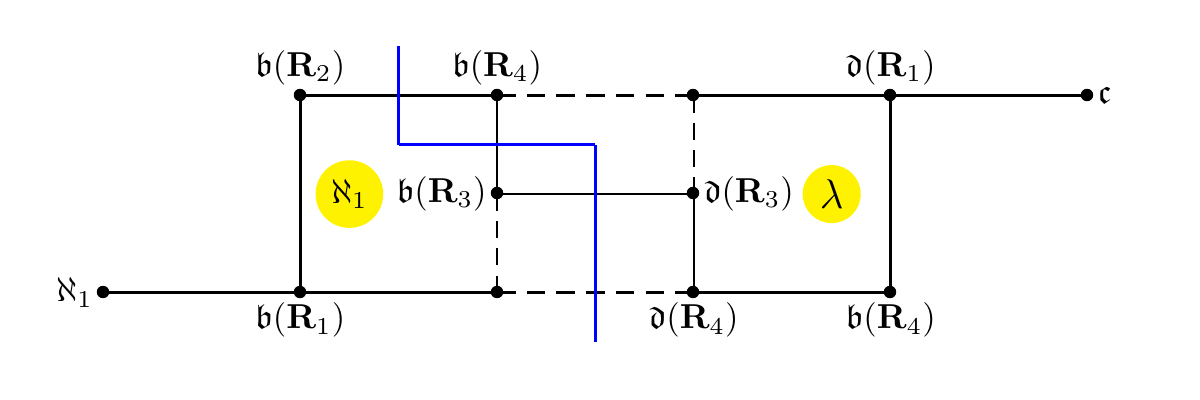}
  \caption{Constellation of Cicho\'n's diagram after a FS iteration of $\Eor$ of length $\lambda$ regular.}
  \label{EffectE}
\end{center}
\end{figure}

\subsubsection*{Hechler forcing}  

Let $\Por$ be the FS iteration of $\Dor$ of length $\lambda$. Then $\Por$ forces
\begin{enumerate}[label=(\roman*)]
    \item $\Rbf_3\eqT\Rbf_4\eqT\lambda$, so $\add(\Mwf)=\cof(\Mwf)=\cf(\lambda)$; and
    \item $\Rbf_1\eqT\Rbf_2\eqT\Cbf_{[\lambda]^{<\aleph_1}}\eqT\Cbf_{[\R]^{<\aleph_1}}$.
\end{enumerate}
In particular, when $\lambda$ is regular, $\Por$ forces \autoref{EffectD}.

\begin{figure}[ht]
\begin{center}
  \includegraphics[scale=1]{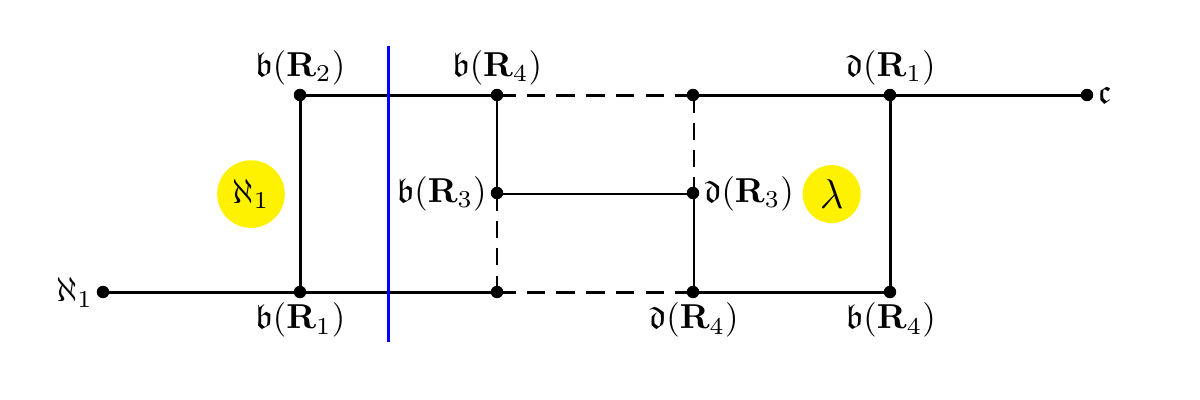}
  \caption{Constellation of Cicho\'n's diagram in Hechler's model when $\lambda$ is regular.}
  \label{EffectD}
\end{center}
\end{figure}

\subsubsection*{Localization forcing} 

Let $\Por$ be a FS iteration of $\LOCor$ of length $\lambda$. Then $\Por$ forces $\cfrak=\lambda$ and $\Rbf_1\eqT\lambda$, hence $\add(\Nwf)=\cof(\Nwf)=\cf(\lambda)$. In particular, when $\lambda$ is regular, $\Por$ forces \autoref{EffectLOC}.

\begin{figure}[ht]
\begin{center}
  \includegraphics[scale=1]{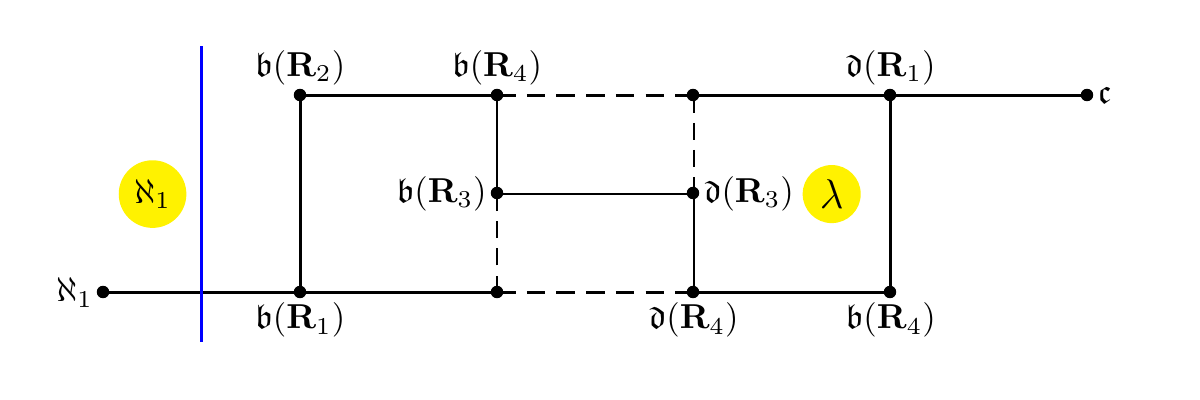}
  \caption{Constellation of Cicho\'n's diagram after a FS iteration of $\LOCor$ of length $\lambda$ regular.}
  \label{EffectLOC}
\end{center}
\end{figure}

\subsection{More values}

We now present examples of more different values in Cicho\'n's diagram. All the results cited from~\cite{M} are due to Brendle.

\begin{theorem}[{\cite[Theorem 3]{M}}]\label{mod1}
If $\aleph_1\leq\lambda_1\leq\lambda_2\leq\lambda_3\leq\lambda_4$ are regular cardinals and $\lambda_5\geq\lambda_4$ is a cardinal such that $\lambda_5^{<\lambda_3}=\lambda_5$, then there is a ccc poset forcing, for $1\leq i\leq 3$,
\begin{enumerate}[label=(\alph*)]
    \item $\Rbf_i\eqT\Cbf_{[\lambda_5]^{<{\lambda_i}}}\eqT[\lambda_5]^{<{\lambda_i}}$; and
    \item $\Rbf_4\eqT\lambda_4$. 
\end{enumerate}
In particular, we obtain the consistency of \autoref{BrendleJudahShelah}.
\begin{figure}[ht]
\begin{center}
  \includegraphics[scale=1]{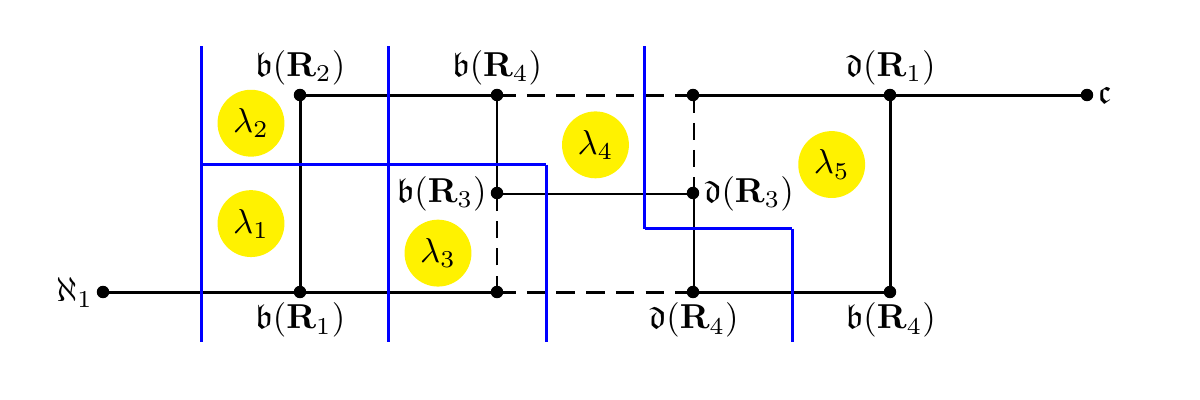}
  \caption{Six values in Cicho\'n's diagram.}
  \label{BrendleJudahShelah}
\end{center}
\end{figure}
\end{theorem}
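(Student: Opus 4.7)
I plan to build a single FS iteration $\Por_\nu = \la\Por_\xi,\Qnm_\xi : \xi<\nu\ra$ of ccc posets of length $\nu := \lambda_5\cdot\lambda_4$ (ordinal product, so $|\nu|=\lambda_5$ and $\cf(\nu)=\lambda_4$). At each stage $\xi$ the iterand will be chosen by a bookkeeping function from a menu of size-restricted forcings: a localization forcing $\LOCor_A$ with $|A|<\lambda_1$ to kill an $\Rbf_1$-target, a random subalgebra $\Bor_A$ of size $<\lambda_2$ to kill an $\Rbf_2$-target, a restricted Hechler forcing $\Dor_A$ with $|A|<\lambda_3$ to kill an $\Rbf_3$-target, and the countable eventually different real forcing $\Eor$ for $\Rbf_4$. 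Using $\lambda_5^{<\lambda_3}=\lambda_5$, I will enumerate all $\Por_\xi$-names of $<\lambda_i$-small subfamilies (for $i=1,2,3$) of the relevant Polish spaces appearing in intermediate models $V_\xi$, and schedule each to be defeated at some later stage.

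For claim (a), the Tukey equivalence $\Rbf_i\eqT\Cbf_{[\lambda_5]^{<\lambda_i}}\eqT[\lambda_5]^{<\lambda_i}$ (for $i\in\{1,2,3\}$) will split into two halves. The ``$\Rbf_i\leqT\Cbf_{[\lambda_5]^{<\lambda_i}}$'' half follows from \autoref{itsmallsets}, since every $<\lambda_i$-small family from some $V_\xi$ is handled by a later iterand that adds an $\Rbf_i$-dominating real over it. The converse ``$\Cbf_{[\nu]^{<\lambda_i}}\leqT\Rbf_i$'' follows from \autoref{PreEUB} provided the whole iteration is $\lambda_i$-$\Rbf_i$-good; by \autoref{sizeforbd} it suffices that every iterand be. Each size-restricted iterand has cardinality $<\lambda_i$ (for the relevant $i$) and is therefore $\lambda_i$-$\Rbf_i$-good by \autoref{smallgood}; the random subalgebras, the $\sigma$-centered iterands, and $\Eor$ are additionally good for the remaining systems by \autoref{exm:good}. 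The final collapse $\Cbf_{[\lambda_5]^{<\lambda_i}}\eqT[\lambda_5]^{<\lambda_i}$ comes from $\lambda_5^{<\lambda_i}=\lambda_5$ and \autoref{lem:thetaid}.

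For claim (b), $\Rbf_4\eqT\lambda_4$: since all iterands are non-trivial and ccc, \autoref{Cohenlimit} gives $\Vdash\nu\leqT\Mg$, yielding $\bfrak(\Mg)\leq\cf(\nu)=\lambda_4\leq\dfrak(\Mg)$. Scheduling $\Eor$-stages cofinally in $\nu$ ensures $\Mg$-dominating reals appear cofinally often, so \autoref{thm:fullgen} forces $\Mg\eqT\nu\eqT\cf(\nu)=\lambda_4$.

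The hardest part will be the careful design of the size-restricted iterands $\LOCor_A$ and $\Dor_A$: their unrestricted versions are, respectively, $\Lc^*$- and $\baire$-dominating, so using either naively would destroy the Tukey connections I intend to preserve. Restricting $A$ to lie in the ground model and within the current $<\lambda_i$-queue keeps the poset of size $<\lambda_i$ (so that \autoref{smallgood} applies to it directly) while still allowing it to add a generic that defeats the queued target. The parallel scheduling problem --- interleaving three independent bookkeeping streams and maintaining a cofinal subset of $\Eor$-stages of order-type $\lambda_4$ --- is routine but technically delicate, and this is where I expect most of the administrative work.
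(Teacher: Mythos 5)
Your proposal is correct and follows essentially the same route as the paper: the same iteration of length $\lambda_5\lambda_4$ with $\Eor$ cofinally often and three bookkeeping streams of size-restricted localization, random, and Hechler iterands, with (a) obtained by sandwiching via \autoref{itsmallsets} and \autoref{PreEUB} (goodness from \autoref{smallgood} and \autoref{exm:good}, preserved by \autoref{sizeforbd}) and (b) from \autoref{thm:fullgen}. The only cosmetic difference is that the paper makes the restriction precise by taking iterands of the form $\LOCor^{\dot N_\xi}$, $\Bor^{\dot N_\xi}$, $\Dor^{\dot N_\xi}$ for $\Por_\xi$-names $\dot N_\xi$ of transitive ZFC-models of size ${<}\lambda_i$ covering the queued targets, which is exactly the device you describe informally as restricting to a small set $A$.
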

\begin{proof}
We shall perform a FS iteration $\Por=\la\Por_\xi,\Qnm_\xi:\,  \xi<\nu\ra$ of length $\nu:=\lambda_5\lambda_4$ (ordinal product) as follows. Fix a partition $\la C_i:\,  1\leq i\leq 3\ra$ of $\lambda_5\smallsetminus\{0\}$ where each set has size $\lambda_5$. For each $\rho<\lambda_4$ denote $\eta_\rho:=\lambda_5\rho$. We define the iteration at each $\xi=\eta_\rho+\varepsilon$ for $\rho<\lambda_4$ and $\varepsilon<\lambda_5$ as follows (see \autoref{FSrestrictsmallposet}):

\[\Qnm_\xi:=\left\{\begin{array}{ll}
        \dot\Eor   & \text{if $\varepsilon=0$,}\\
        \LOCor^{\dot N_\xi}  & \text{if $\varepsilon\in C_1$,} \\
        \Bor^{\dot N_\xi} & \text{if $\varepsilon\in C_2$,} \\ 
        \Dor^{\dot N_\xi} & \text{if $\varepsilon\in C_3$,}
    \end{array}\right.\]
where $\dot N_\xi$ is a $\Por_{\xi}$-name of a transitive model of $\thzfc$ of size ${<}\lambda_i$ when $\varepsilon\in C_i$. 

Additionally, by a book-keeping argument, we make sure that all such models $N_\xi$ are constructed such that, for any $\rho<\lambda_4$:
\begin{enumerate}[label=(\roman*)]
    \item if $A\in V_{\eta_\rho}$ is a subset of $\omega^\omega$ of size ${<}\lambda_1$, then there is some $\varepsilon\in C_1$ such that  $A\subseteq N_{\eta_\rho+\varepsilon}$;
    
     \item if $A\in V_{\eta_\rho}$ is a subset of $\Omega$ of size ${<}\lambda_2$, then there is some $\varepsilon\in C_2$ such that  $A\subseteq N_{\eta_\rho+\varepsilon}$; and

    \item if $A\in V_{\eta_\rho}$ is a subset of $\omega^\omega$ of size ${<}\lambda_3$, then there is some $\varepsilon\in C_3$ such that $A\subseteq N_{\eta_\rho+\varepsilon}$.   
\end{enumerate}

\begin{figure}[ht]
\begin{center}
\includegraphics[scale=1]{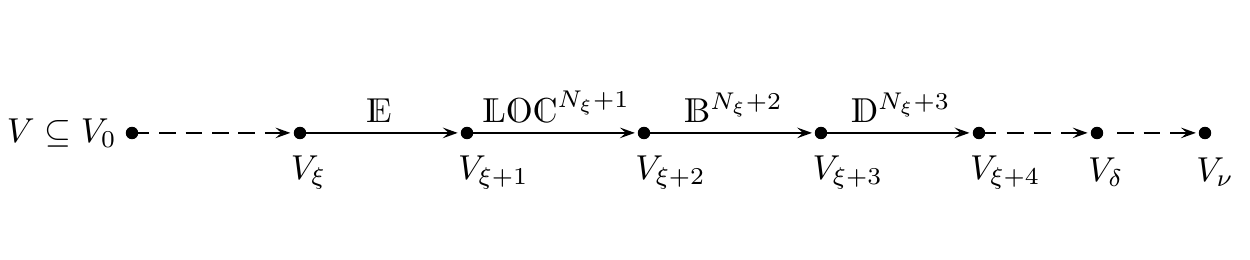}
\caption{A FS iteration of length $\nu$ of ccc partial orders, going through $\Eor$ cofinally often, as well as through all subforcings of localization forcing of size ${<}\lambda_1$, all subforcings of random forcing of size ${<}\lambda_2$, and all subforcings of Hechler forcing of size ${<}\lambda_3$.}
\label{FSrestrictsmallposet}
\end{center}
\end{figure}

We prove that $\Por$ is as required. Clearly, $\Por$ forces $\cfrak=|\nu|=\lambda_5$.\smallskip

Fix $1\leq i\leq 3$. Note that all iterands are $\lambda_i$-$\Rbf_i$-good (see \autoref{smallgood} and \autoref{exm:good}), hence, by \autoref{PreEUB}, $\Por$ forces $\Cbf_{[\nu]^{<{\lambda_i}}}\leqT\Rbf_i$. On the other hand, $\Por$ forces $\Rbf\leqT\Cbf_{[\R]^{<{\lambda_i}}}$ by \autoref{itsmallsets}. Therefore, since $|\Rbf|=|\nu|=\lambda_5$, we conclude (a).\smallskip

Finally, since $\cf(\nu)=\lambda_4$, by \autoref{thm:fullgen} $\Por$ forces $\Rbf_4\eqT\lambda_4$.
\end{proof}

\begin{theorem}[{\cite[Theorem 2]{M}}]
If $\aleph_1\leq\lambda_1\leq\lambda_2\leq\lambda_3$ are regular cardinals and $\lambda_4\geq\lambda_3$ is a cardinal such that $\lambda_4^{<\lambda_3}=\lambda_4$, then there is a ccc poset forcing, for $1\leq i\leq 3$,
\begin{enumerate}[label=(\roman*)]
    \item $\Rbf_i\eqT\Cbf_{[\lambda_4]^{<{\lambda_i}}}\eqT[\lambda_4]^{<{\lambda_i}}$; and 
    \item $\Rbf_4\eqT\Rbf_3\eqT\Cbf_{[\lambda_4]^{<{\lambda_3}}}\eqT[\lambda_4]^{<{\lambda_3}}$.
\end{enumerate}
In particular, we obtain the consistency of \autoref{MejiaMatrixthm2}.
\begin{figure}[ht]
\begin{center}
  \includegraphics[scale=1]{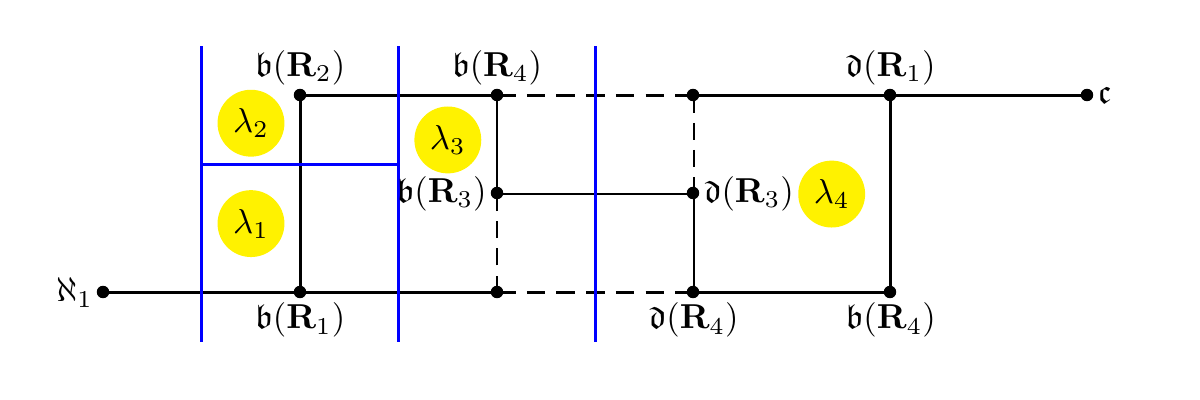}
  \caption{Five values in Cicho\'n's diagram.}
  \label{MejiaMatrixthm2}
\end{center}
\end{figure}
\end{theorem}
\begin{proof}
Perform a FS iteration $\Por=\la\Por_\xi,\Qnm_\xi:\,  \xi<\lambda_4\ra$ as follows. Fix a partition $\la C_i:\,  1\leq i\leq 3\ra$ of $\lambda_4$ into cofinal subsets of size $\lambda_4$.
For each $\xi<\lambda_5$ define:
\[\Qnm_\xi:=\left\{\begin{array}{ll}
        \LOCor^{\dot N_\xi}  & \text{if $\xi\in C_1$,} \\
        \Bor^{\dot N_\xi} & \text{if $\xi\in C_2$,} \\ 
        \Dor^{\dot N_\xi} & \text{if $\xi\in C_3$,}
    \end{array}\right.\]
where $\dot N_\xi$ is a $\Por_\xi$-name of a transitive model of $\thzfc$ of size ${<}\lambda_i$ when $\xi\in C_i$.
Additionally, by a book-keeping argument, we make sure that all such models $N_\xi$ are constructed such that conditions similar to (i)--(iii) of the proof of \autoref{mod1} are satisfied. Concretely, if we denote $\Rbf_i:=\la X_i,Y_i,\sqsubset^i\ra$, we guarantee that, for any $\xi<\lambda$ and $A\subseteq X_i^{V_\xi}$ of size ${<}\lambda_i$, there is some $\eta\geq\xi$ in $C_i$ such that $A\subseteq N_\eta$. Then, $\Por$ is as required.
\end{proof}

\begin{theorem}[{\cite[Theorem 4]{M}}]\label{mod3}
If $\aleph_1\leq\lambda_1\leq\lambda_2\leq\lambda_3$ are regular cardinals and $\lambda_4\geq\lambda_3$ is a cardinal such that $\lambda_4^{<\lambda_2}=\lambda_4$, then there is a ccc poset forcing
\begin{enumerate}[label=(\roman*)]
    \item $\Rbf_1\eqT\Cbf_{[\lambda_4]^{<{\lambda_1}}}\eqT[\lambda_4]^{<{\lambda_1}}$, $\Rbf_3\eqT\Cbf_{[\lambda_4]^{<{\lambda_2}}}\eqT[\lambda_4]^{<{\lambda_2}}$, and
    \item $\Rbf_2\eqT\Rbf_4\eqT\lambda_3$. 
\end{enumerate}
In particular, we obtain the consistency of \autoref{MejiaMatrixthm4}.
\begin{figure}[H]
\begin{center}
  \includegraphics[scale=1]{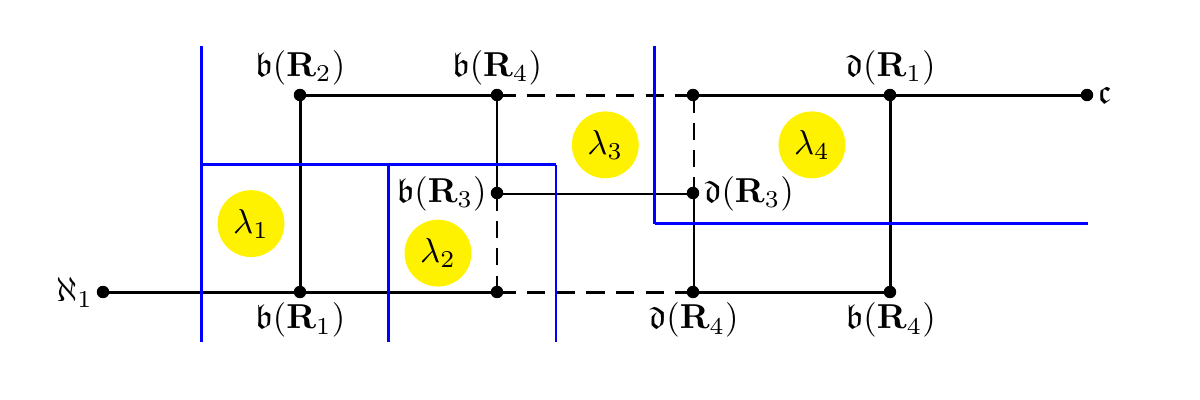}
  \caption{Five values in Cicho\'n's diagram.}
  \label{MejiaMatrixthm4}
\end{center}
\end{figure}
\end{theorem}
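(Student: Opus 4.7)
The plan is to follow the template of \autoref{mod1} and perform a FS iteration $\Por_\nu=\la\Por_\xi,\Qnm_\xi:\, \xi<\nu\ra$ of length $\nu:=\lambda_4\cdot\lambda_3$ (ordinal product), so that $\cf(\nu)=\lambda_3$ and $|\nu|=\lambda_4$. Writing $\eta_\rho:=\lambda_4\cdot\rho$ for $\rho<\lambda_3$, I fix a partition $\la C_1,C_2\ra$ of $\lambda_4\menos\{0,1\}$ into cofinal subsets of size $\lambda_4$ and, at each stage $\xi=\eta_\rho+\varepsilon$, define
\[\Qnm_\xi:=\left\{\begin{array}{ll}
\Bor & \text{if } \varepsilon=0,\\
\dot\Eor & \text{if } \varepsilon=1,\\
\LOCor^{\dot N_\xi} & \text{if } \varepsilon\in C_1,\\
\Dor^{\dot N_\xi} & \text{if } \varepsilon\in C_2,
\end{array}\right.\]
where $\dot N_\xi$ is a $\Por_\xi$-name for a transitive $\thzfc$-model of size ${<}\lambda_i$ when $\varepsilon\in C_i$. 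Using the assumption $\lambda_4^{<\lambda_2}=\lambda_4$, a standard book-keeping ensures that for every $\rho<\lambda_3$ and every $A\in V_{\eta_\rho}$ of size ${<}\lambda_i$ in the appropriate Polish space there is some $\varepsilon\in C_i$ with $A\subseteq \dot N_{\eta_\rho+\varepsilon}$; since $\cf(\nu)=\lambda_3\geq\lambda_2$ and the iteration is ccc, every subset of reals of size ${<}\lambda_i$ in $V_\nu$ in fact lies in some $V_{\eta_\rho}$ and is thus handled by the book-keeping.

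To force (ii), observe that the cofinal set $L:=\{\eta_\rho,\eta_\rho+1:\rho<\lambda_3\}$ adds a random real (which is $\Rbf_2$-dominating) or an $\Eor$-real (which gives an $\Rbf_4$-dominating real) at each of its stages, so \autoref{thm:fullgen} forces $\Rbf_2\eqT\Rbf_4\eqT\nu\eqT\lambda_3$. For the upper bounds in (i), the book-keeping combined with \autoref{itsmallsets} forces $\Rbf_1\leqT\Cbf_{[\lambda_4]^{<\lambda_1}}$ and $\Rbf_3\leqT\Cbf_{[\lambda_4]^{<\lambda_2}}$ in $V_\nu$. For the matching lower bounds I must verify that the whole iteration is both $\lambda_1$-$\Rbf_1$-good and $\lambda_2$-$\Rbf_3$-good: by \autoref{exm:good}, $\Bor$ is $\Lc^*$-good (Kamburelis) and $\omega^\omega$-good, $\dot\Eor$ is $\sigma$-centered so $\Lc^*$-good and $\omega^\omega$-good, and the restricted $\LOCor^{\dot N_\xi}$ and $\Dor^{\dot N_\xi}$ have size ${<}\lambda_1$, resp.\ ${<}\lambda_2$, so by \autoref{smallgood} they are $\lambda_1$-good, resp.\ $\lambda_2$-good, for every Prs. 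Then \autoref{sizeforbd} and \autoref{PreEUB} yield $\Cbf_{[\nu]^{<\lambda_1}}\leqT\Rbf_1$ and $\Cbf_{[\nu]^{<\lambda_2}}\leqT\Rbf_3$ in $V_\nu$.

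Assembling both directions and invoking \autoref{lem:thetaid} (which applies since $\lambda_4^{<\lambda_2}=\lambda_4$ implies $\lambda_4^{<\lambda_1}=\lambda_4$) gives (i), and this also forces $\cof(\Nwf)=\lambda_4$, so $\cfrak=\lambda_4$. The main subtle point is coordinating the two different goodness thresholds simultaneously: the $\Lc^*$-goodness of $\Bor$ via Kamburelis is essential so that the randoms added for (ii) do not push $\cof(\Nwf)$ above $\lambda_4$, and the $\omega^\omega$-goodness of both $\Bor$ and $\Eor$ is needed so that adding $\lambda_3$-many random and $\Eor$-reals does not disturb the Tukey bound $\Cbf_{[\nu]^{<\lambda_2}}\leqT\Rbf_3$ coming from \autoref{PreEUB}.
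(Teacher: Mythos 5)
Your construction is essentially the paper's: the paper also runs a FS iteration of length $\lambda_4\lambda_3$ with restricted localization forcings (models of size ${<}\lambda_1$) on one cofinal piece of each block, restricted Hechler forcings (size ${<}\lambda_2$) on another, and \emph{unrestricted} random forcing cofinally often, then concludes exactly via \autoref{itsmallsets}, \autoref{PreEUB} and \autoref{thm:fullgen}. Your only deviations are cosmetic: you place $\Bor$ at a single slot per block instead of on a third cofinal set $C_3\subseteq\lambda_4$, and you add a full $\Eor$ at $\varepsilon=1$, which is harmless but unnecessary --- once $\Rbf_2\eqT\nu$ is forced by the random stages, the ``in addition'' clause of \autoref{thm:fullgen} (via \autoref{Prsfact} and \autoref{Cohenlimit}) already gives $\Rbf_4\eqT\Mg\eqT\nu$ for free.

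There is, however, one genuine gap in your goodness verification. To get $\Cbf_{[\nu]^{<\lambda_1}}\leqT\Rbf_1$ from \autoref{sizeforbd} and \autoref{PreEUB} you need \emph{every} iterand to be $\lambda_1$-$\Lc^*$-good, but for $\Dor^{\dot N_\xi}$ you only establish $\lambda_2$-goodness via \autoref{smallgood} (these posets have size ${<}\lambda_2$, which may well be ${\geq}\lambda_1$). Goodness is monotone \emph{upward} in the parameter $\theta$, not downward, so $\lambda_2$-$\Lc^*$-goodness does not yield $\lambda_1$-$\Lc^*$-goodness. The fix is the one the paper uses implicitly: $\Dor^{\dot N_\xi}$ is a subforcing of Hechler forcing, hence $\sigma$-centered, hence $\Lc^*$-good (i.e.\ $\aleph_1$-$\Lc^*$-good, so a fortiori $\lambda_1$-$\Lc^*$-good) by \autoref{exm:good}~(3) --- exactly as you already argue for $\Eor$. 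A smaller point: your closing remark that Kamburelis' $\Lc^*$-goodness of $\Bor$ prevents the randoms from ``pushing $\cof(\Nwf)$ above $\lambda_4$'' is misstated; $\cof(\Nwf)\leq\cfrak=\lambda_4$ is automatic, and what the goodness actually protects is the \emph{lower} Tukey bound $\Cbf_{[\nu]^{<\lambda_1}}\leqT\Lc^*$, i.e.\ it keeps $\add(\Nwf)\leq\lambda_1$ (and $\cof(\Nwf)\geq\lambda_4$).
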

\begin{proof}
Perform a FS iteration $\la\Por_\xi,\Qnm_\xi:\,  \xi<\nu\ra$ of length $\nu:=\lambda_4\lambda_3$ as follows. Fix a partition $\la C_i:\,  1\leq i\leq 3\ra$ of $\lambda_4\smallsetminus\{0\}$ into cofinal subsets of size $\lambda_4$.
For each $\rho<\lambda_3$ denote $\eta_\rho:=\lambda_4\rho$. We define the iteration at each $\xi=\eta_\rho+\varepsilon$ for $\rho<\lambda_3$ and $\varepsilon<\lambda_4$ as follows:
\[\Qnm_\xi:=\left\{\begin{array}{ll}
        \LOCor^{\dot N_\xi}  & \text{if $\varepsilon\in C_1$,} \\
        \Dor^{\dot N_\xi} & \text{if $\varepsilon\in C_2$,}\\
        \dot{\Bor} & \text{if $\varepsilon\in C_3$,}
    \end{array}\right.\]
where $\dot N_\xi$ is a $\Por_{\xi}$-name of a transitive model of $\thzfc$ of size ${<}\lambda_i$ when $\varepsilon\in C_i$. We use book-keeping as in (i)--(iii) of \autoref{mod1}.
The poset $\Por$ is as required.
\end{proof}

\begin{theorem}[{\cite[Theorem 5]{M}}]
If $\aleph_1\leq\lambda_1\leq\lambda_2\leq\lambda_3$ are regular cardinals and $\lambda_4\geq\lambda_3$ is a cardinal such that $\lambda_4^{<\lambda_2}=\lambda_4$, then there is a ccc poset forcing, for $1\leq i\leq 2$
\begin{enumerate}[label=(\roman*)]
    \item $\Rbf_i\eqT\Cbf_{[\lambda_4]^{<{\lambda_i}}}\eqT[\lambda_4]^{<{\lambda_i}}$; and
    \item $\Rbf_3\eqT\Rbf_4\eqT\lambda_3$.
\end{enumerate}
In particular, we obtain the consistency of \autoref{MejiaMatrixthm5}.
\begin{figure}[ht]
\begin{center}
  \includegraphics[scale=1]{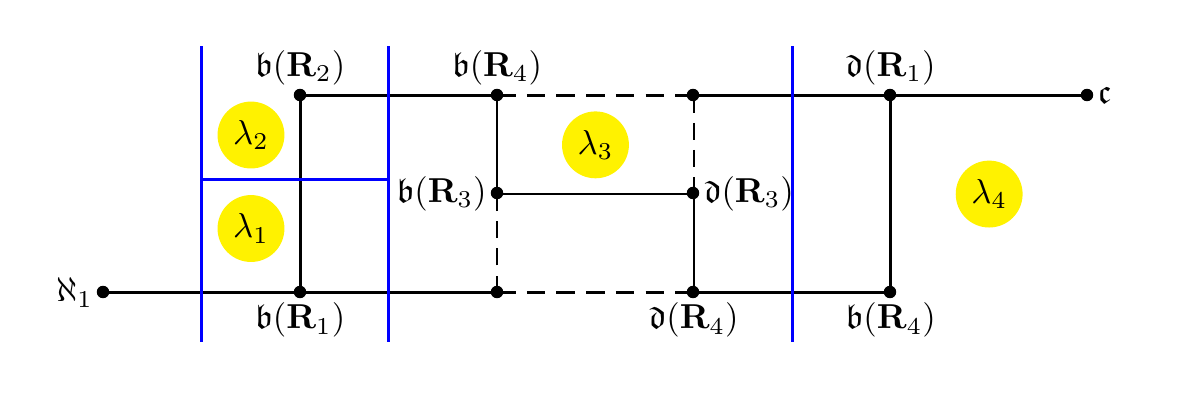}
  \caption{Five values in Cicho\'n's diagram.}
  \label{MejiaMatrixthm5}
\end{center}
\end{figure}
\end{theorem}
\begin{proof}
Perform a FS iteration $\Por=\la\Por_\xi,\Qnm_\xi:\,  \xi<\nu\ra$ of length $\nu:=\lambda_4\lambda_3$ as follows. Consider the same preparation as in the proof of \autoref{mod3}. Using book-keeping as in previous proofs, we define the iteration at each $\xi=\eta_\rho+\varepsilon$ for $\rho<\lambda_3$ and $\varepsilon<\lambda_4$ as follows:
\[\Qnm_\xi:=\left\{\begin{array}{ll}
        \LOCor^{\dot N_\xi}  & \text{if $\varepsilon\in C_1$,}\\
        \Bor^{\dot N_\xi} & \text{if $\varepsilon\in C_2$,}\\
        \dot{\Dor} & \text{if $\varepsilon\in C_3$,}
    \end{array}\right.\]
where $\dot N_\xi$ is a $\Por_{\xi}$-name of a transitive model of $\thzfc$ of size ${<}\lambda_{i}$ when $\varepsilon\in C_i$.  
%
%
\end{proof}






We conclude this section by presenting three important results of the left-hand side of Cicho\'n's digram, which uses sophisticated techniques such as finitely additive measures as well as ultrafilters along FS iterations, and ultrafilters along matrix iterations. 

\begin{theorem}[{\cite{GMS,GKScicmax}}]\label{gksmax}
Let $\lambda_1\leq\lambda_2\leq\lambda_3=\lambda_3^{<\lambda_3}\leq\lambda_4$ be uncountable regular cardinals, and assume that $\lambda_4<\lambda_5^{<\lambda_4}$ and  $\lambda_4$ is $\aleph_1$-inaccessible.\footnote{A cardinal $\lambda$ is \emph{$\theta$-inaccessible} if $\mu^\nu<\lambda$ for any $\mu<\lambda$ and $\nu<\theta$.} Then there is a ccc poset that forces $\cfrak=\lambda_5$ and $\Rbf_i\eqT\Cbf_{[\lambda_5]^{<\lambda_i}}\eqT[\lambda_5]^{<\lambda_i}\eqT[\lambda_5]^{<\lambda_i}\cap V$ for all $1\leq i\leq 4$. In particular, it forces the constellation in \autoref{GMS}.
\begin{figure}[H]
\begin{center}
  \includegraphics[scale=1]{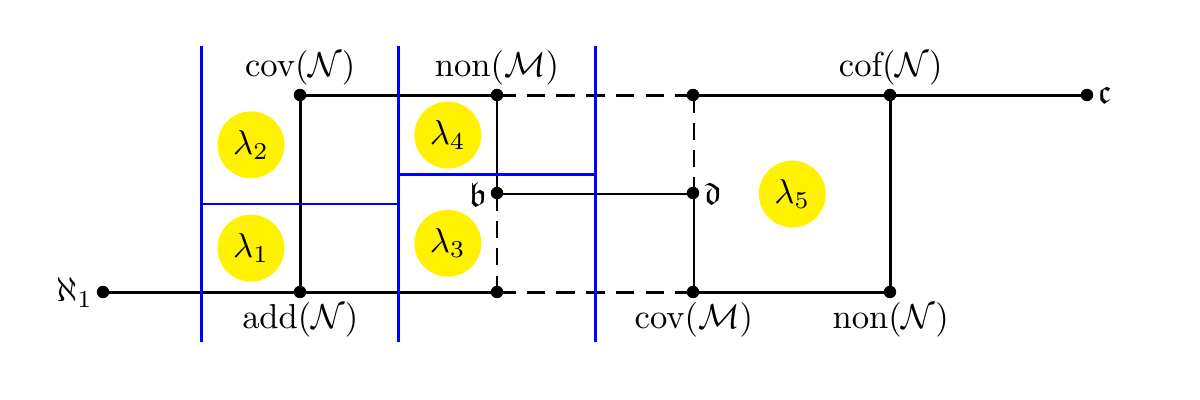}
  \caption{The left side of Cicho\'n's diagram.}
  \label{GMS}
\end{center}
\end{figure}
\end{theorem}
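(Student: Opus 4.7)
The plan is to perform a FS ccc iteration $\Por=\la\Por_\xi,\Qnm_\xi:\,\xi<\nu\ra$ of length $\nu:=\lambda_5\cdot\lambda_4$ that combines four ``tracks'' of small restrictions of the Suslin posets of \autoref{exm:Suslinccc}, one for each $\Rbf_i$. Partition $\lambda_5\smallsetminus\{0\}$ into cofinal sets $\la C_i:\,1\leq i\leq 4\ra$ of size $\lambda_5$; writing $\eta_\rho:=\lambda_5\cdot\rho$ and $\xi=\eta_\rho+\varepsilon$ with $\rho<\lambda_4$ and $\varepsilon<\lambda_5$, I set
\[
\Qnm_\xi:=\begin{cases}
\LOCor^{\dot N_\xi} & \text{if }\varepsilon\in C_1,\\
\Bor^{\dot N_\xi}   & \text{if }\varepsilon\in C_2,\\
\Dor^{\dot N_\xi}   & \text{if }\varepsilon\in C_3,\\
\Eor^{\dot N_\xi}   & \text{if }\varepsilon\in C_4,
\end{cases}
\]
where $\dot N_\xi$ is a $\Por_\xi$-name of a transitive model of a sufficient fragment of $\thzfc$ of size ${<}\lambda_i$ whenever $\varepsilon\in C_i$. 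A book-keeping argument, generalizing (i)--(iii) in the proof of \autoref{mod1} and leveraging $\lambda_3=\lambda_3^{<\lambda_3}$ together with the $\aleph_1$-inaccessibility of $\lambda_4$ and the hypothesis on $\lambda_5$, ensures that for every $i$, every $\rho<\lambda_4$, and every $A\in[X_i]^{<\lambda_i}\cap V_{\eta_\rho}$ there is some $\varepsilon\in C_i$ with $A\subseteq \dot N_{\eta_\rho+\varepsilon}$.

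The upper Tukey bound $\Rbf_i\leqT\Cbf_{[\lambda_5]^{<\lambda_i}}$ is then immediate from \autoref{itsmallsets}, since the $C_i$-iterands cofinally add $\Rbf_i$-dominating reals over every prescribed ${<}\lambda_i$-sized family (each such family is absorbed into some $\dot N_\xi$ by the book-keeping). For the reverse Tukey bound $\Cbf_{[\lambda_5]^{<\lambda_i}}\leqT\Rbf_i$ I apply \autoref{PreEUB}, which asks that every iterand be $\lambda_i$-$\Rbf_i$-good. Iterands on $C_j$ with $j\leq i$ have size ${<}\lambda_j\leq\lambda_i$ and hence are $\lambda_i$-$\Rbf_i$-good by \autoref{smallgood}. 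Iterands on $C_j$ with $j>i$ are (small subforcings of) the Suslin forcings of \autoref{exm:good}: $\Dor$ and $\Eor$ are $\sigma$-centered hence $\aleph_1$-good with respect to both $\Lc^*$ and $\Cn$, and $\Eor$ is in addition $\omega^\omega$-good, whereas the random iterands on $C_2$ (which appear here only for $i\in\{1,3,4\}$) are $\Lc^*$- and $\omega^\omega$-good. Combining these two Tukey connections with \autoref{cccgenext} and \autoref{lem:thetaid} identifies $\Rbf_i\eqT\Cbf_{[\lambda_5]^{<\lambda_i}}\eqT[\lambda_5]^{<\lambda_i}\eqT[\lambda_5]^{<\lambda_i}\cap V$ in $V_\nu$, and $\cfrak=\lambda_5$ follows from $|\nu|=\lambda_5$.

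The delicate case is $i=4$: the $C_4$-iterands must simultaneously add $\Mg$-dominating reals (for \autoref{itsmallsets}) and preserve $\lambda_4$-$\Mg$-goodness of the whole iteration (for \autoref{PreEUB}), even though $\Eor$ itself is not $\Mg$-good. The workaround is precisely to restrict to $\Eor^{\dot N_\xi}$ with $|\dot N_\xi|<\lambda_4$: \autoref{smallgood} supplies $\lambda_4$-$\Mg$-goodness, while each such restriction still adds an eventually different real over its own $\dot N_\xi$, which suffices to $\Mg$-dominate every ${<}\lambda_4$-family captured by the book-keeping. This is where $\aleph_1$-inaccessibility of $\lambda_4$ becomes essential, as it bounds the number of ${<}\lambda_4$-sized subsets of reals that need to be listed inside the $\lambda_4$ many blocks of length $\lambda_5$ so that every relevant family can indeed be enclosed in some $\dot N_\xi$.
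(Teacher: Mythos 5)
There is a genuine gap, and it sits exactly at the point that makes this theorem hard. First, note that the paper does not prove \autoref{gksmax} at all: it cites \cite{GMS,GKScicmax} and explicitly warns that these results ``use sophisticated techniques such as finitely additive measures as well as ultrafilters along FS iterations.'' Your skeleton --- a fourth restricted track $\Eor^{\dot N_\xi}$ with $|\dot N_\xi|<\lambda_4$ added to the scheme of \autoref{mod1}, with \autoref{itsmallsets} for the upper Tukey bounds and \autoref{PreEUB} for the lower ones --- is indeed the right shape of the construction. The problem is the case $i=3$, not $i=4$. To force $\Cbf_{[\lambda_5]^{<\lambda_3}}\leqT\Rbf_3=\baire$ (i.e.\ to keep $\bfrak\leq\lambda_3$), \autoref{PreEUB} requires \emph{every} iterand to be $\lambda_3$-$\baire$-good, and the iterands $\Eor^{\dot N_\xi}$ may have size anywhere in $[\lambda_3,\lambda_4)$, so \autoref{smallgood} does not apply to them. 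Your justification is that ``$\Eor$ is in addition $\omega^\omega$-good,'' but Miller's theorem in \autoref{exm:good}~(1) concerns the unrestricted $\Eor$, and $\theta$-$\Rbf$-goodness is not inherited by subforcings; nothing you cite yields $\lambda_3$-$\baire$-goodness of $\Eor^{\dot N_\xi}$. (By contrast, $\sigma$-centeredness of $\Eor^{\dot N_\xi}$ does take care of $\Cn$- and $\Lc^*$-goodness, and smallness takes care of $\Mg$-goodness, so those parts of your argument are fine.) This single missing step is precisely why $\bfrak<\non(\Mwf)<\cov(\Mwf)$ on the left side was open until \cite{GMS}: the fix is the ultrafilter-limit technique, abstracted in \cite{Mvert,BCM} as the statement quoted in \autoref{exm:good}~(1) that $\mu$-$\mathrm{Fr}$-linked posets are $\mu^+$-$\Dbf$-good, applied to the fact that $\Eor^{\dot N_\xi}$ is $\sigma$-$\mathrm{Fr}$-linked. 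Without invoking that (or an equivalent finitely-additive-measure argument), the proof does not close.

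A secondary error: your explanation of the hypothesis that $\lambda_4$ is $\aleph_1$-inaccessible is not right. That hypothesis gives $\mu^{\aleph_0}<\lambda_4$ for $\mu<\lambda_4$; it says nothing about the number of ${<}\lambda_4$-sized families of reals (that is governed by the arithmetic assumption on $\lambda_5$, which as stated in the theorem is presumably meant to read $\lambda_4<\lambda_5=\lambda_5^{<\lambda_4}$ so that the book-keeping can be carried out). The $\aleph_1$-inaccessibility is what allows the models $\dot N_\xi$ of each size ${<}\lambda_4$ to be chosen closed under countable sequences, which is needed for the ultrafilter-limit ($\mathrm{Fr}$-linked) argument of the previous paragraph to go through --- so the two gaps are really the same one.
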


\begin{theorem}[{\cite{KST}}]\label{leftKST}
Let $\lambda_1\leq\lambda_2=\lambda_2^{<\lambda_2}\leq\lambda_3\leq\lambda_4$ be regular cardinals, and assume that $\lambda_3$ and $\lambda_4$ are $\aleph_1$-inaccessible, and $\lambda_5=\lambda_5^{<\lambda_4}>\lambda_4$. Then there is a ccc poset that forces $\cfrak=\lambda_5$, $\Rbf_i\eqT[\lambda_5]^{<\lambda_i}\cap V$ for $i=1,4$, $\Rbf_2\eqT[\lambda_5]^{<\lambda_3}\cap V$ and $\Rbf_3\eqT[\lambda_5]^{<\lambda_2}\cap V$. In particular, it forces the constellation in \autoref{KST}.
\end{theorem}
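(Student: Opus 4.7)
The plan is to carry out a finite support ccc iteration $\Por = \la \Por_\xi, \Qnm_\xi : \xi < \nu\ra$ of length $\nu := \lambda_5 \cdot \lambda_4$, directly modelled on \autoref{mod1} and \autoref{mod3} but with four interleaved restricted Suslin forcings instead of three. Partition a suitable cofinal subset of $\nu$ into four cofinal pieces $C_1, C_2, C_3, C_4$ of size $\lambda_5$. Write $(\mu_1, \mu_2, \mu_3, \mu_4) := (\lambda_1, \lambda_3, \lambda_2, \lambda_4)$ for the thresholds we plan to target for $\Rbf_1, \Rbf_2, \Rbf_3, \Rbf_4$ respectively, and at each $\xi \in C_j$ define
\[\Qnm_\xi := \left\{\begin{array}{ll}
\LOCor^{\dot N_\xi} & \text{if $j = 1$,}\\
\Bor^{\dot N_\xi}   & \text{if $j = 2$,}\\
\Dor^{\dot N_\xi}   & \text{if $j = 3$,}\\
\Eor^{\dot N_\xi}   & \text{if $j = 4$,}
\end{array}\right.\]
where $\dot N_\xi$ is a $\Por_\xi$-name for a transitive model of $\thzfc$ of size $<\mu_j$. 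Book-keeping arranges that for every $\xi$ and every subset $A$ of the first coordinate of $\Rbf_j$ of size $<\mu_j$ appearing in $V_\xi$, some later $N_\eta$ with $\eta \in C_j$ contains $A$; this is possible because $\lambda_5^{<\mu_j} \leq \lambda_5^{<\lambda_4} = \lambda_5$.

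Fix $j \in \{1,2,3,4\}$. The upper bound $\Rbf_j \leqT \Cbf_{[\lambda_5]^{<\mu_j}}$ is immediate from \autoref{itsmallsets}, since at every $\eta \in C_j$ the iterand $\Qnm_\eta$ adds an $\Rbf_j$-dominating real over $N_\eta$ and hence over any $A$ supplied by the book-keeping. The converse $\Cbf_{[\lambda_5]^{<\mu_j}} \leqT \Rbf_j$ follows from \autoref{PreEUB} combined with \autoref{sizeforbd}, once every iterand is verified to be $\mu_j$-cc and $\mu_j$-$\Rbf_j$-good. Putting the two bounds together with \autoref{lem:thetaid} (whose hypothesis $\lambda_5^{<\mu_j} = \lambda_5$ is granted) and \autoref{cccgenext} delivers $\Rbf_j \eqT [\lambda_5]^{<\mu_j} \cap V$ in $V^\Por$, which matches the statement once we read off the $\mu_j$ case by case.

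The heart of the argument, and the anticipated main obstacle, is the $4 \times 4$ goodness audit: for each pair $(j, k)$, iterands of type $k$ must be $\mu_j$-$\Rbf_j$-good. On the diagonal $k = j$, smallness of the iterand suffices; here the hypotheses $\lambda_2 = \lambda_2^{<\lambda_2}$ and the $\aleph_1$-inaccessibility of $\lambda_3, \lambda_4$ are what guarantee that a poset coded in a model of size $<\mu_j$ itself has size $<\mu_j$, so \autoref{smallgood} applies. Off-diagonal, size alone does not suffice and one draws on the intrinsic goodness catalogued in \autoref{exm:good}: $\Dor$ and $\Eor$ are $\sigma$-centered, hence $\aleph_1$-$\Cn$-good and $\aleph_1$-$\Lc^*$-good; $\Bor$ is $\aleph_1$-$\Lc^*$-good by Kamburelis and $\aleph_1$-$\baire$-good since random adds no dominating real; $\Eor$ is $\aleph_1$-$\baire$-good by Miller. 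In each case $\aleph_1$-goodness upgrades automatically to $\mu_j$-goodness for $\mu_j \geq \aleph_1$. Once every cell is verified, \autoref{sizeforbd} propagates goodness along the whole iteration, and the proof concludes exactly as in the preceding theorems of this section.
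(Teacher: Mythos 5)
The paper does not actually prove \autoref{leftKST}: it is imported from \cite{KST}, and the surrounding text explicitly flags that this theorem (together with \autoref{gksmax} and \autoref{leftBCM}) ``uses sophisticated techniques such as finitely additive measures as well as ultrafilters along FS iterations'' --- i.e.\ machinery beyond the goodness framework of Sections~1--3. Your proposal tries to derive it from that framework alone, and the gap sits exactly where one would expect.

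The concrete problem is your ``$4\times 4$ goodness audit''. The off-diagonal goodness facts you invoke are properties of the \emph{full} Suslin posets, but the iterands are the restrictions $\Eor^{\dot N_\xi}$, $\Bor^{\dot N_\xi}$, $\Dor^{\dot N_\xi}$ to submodels, and these are \emph{not} complete subposets of $\Eor$, $\Bor$, $\Dor$, so goodness does not transfer. The critical cell is $(\Rbf_3,\Eor)$: to force $\non(\Mwf)\geq\lambda_4$ you must use $\Eor^{N}$ with $|N|$ up to $\lambda_4$, hence of size $\geq\lambda_2$, and you need these to be $\lambda_2$-$\baire$-good in order to keep $\Cbf_{[\lambda_5]^{<\lambda_2}}\leqT\baire$ alive. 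Miller's theorem (\autoref{exm:good}(1)) concerns the full $\Eor$ and says nothing about $\Eor^{N}$; the modern route via $\mu$-$\mathrm{Fr}$-linkedness also fails for an arbitrary $N$, because the existence of limits of countable sequences of conditions inside the linked pieces is not inherited by arbitrary subposets. Making it work requires choosing the models $N_\xi$ closed under countable sequences and running an ultrafilter-limit (or FAM-limit) preservation argument --- this is precisely the content of \cite{GKScicmax,GMS,KST}, and it is exactly why the hypotheses demand that $\lambda_3$ and $\lambda_4$ be $\aleph_1$-inaccessible (so that $|N|^{\aleph_0}<\lambda_4$ can be arranged while still catching every set of size ${<}\lambda_4$). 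Your reading of the $\aleph_1$-inaccessibility hypotheses (that they ensure a poset coded in a model of size ${<}\mu_j$ has size ${<}\mu_j$) is both incorrect as an explanation and never actually needed, which signals that their true role has been missed. The same caveat applies to your treatment of $\Rbf_4$: note that \autoref{exm:good} contains no goodness entry for $\Mg$ at all, and none of the theorems proved in \autoref{aply} ever forces $\Rbf_4\eqT[\lambda_5]^{<\lambda_4}$ with $\lambda_4<\lambda_5$ --- they all settle for $\Rbf_4\eqT\cf(\nu)$ or $\Rbf_4\eqT\Rbf_3$. Forcing $\non(\Mwf)<\cov(\Mwf)$ with both values strictly between $\aleph_1$ and $\cfrak$ is the feature that separates \autoref{gksmax}, \autoref{leftKST} and \autoref{leftBCM} from the elementary results, and it cannot be dispatched by citing \autoref{smallgood}, \autoref{sizeforbd} and \autoref{PreEUB}; the known proofs preserve the strong witness $\Cbf_{[\lambda_5]^{<\lambda_4}}\leqT\Mg$ against the restricted random and eventually-different iterands by means of the FAM/ultrafilter-limit technology. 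In short: the skeleton of your iteration (four interleaved restricted forcings with thresholds $\lambda_1,\lambda_3,\lambda_2,\lambda_4$ and book-keeping) is the right shape, but the ``anticipated main obstacle'' you identify is not where the difficulty lies, and the step you dismiss as an audit is an open door through which the whole argument escapes.
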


\begin{figure}[ht]
\begin{center}
  \includegraphics[scale=1]{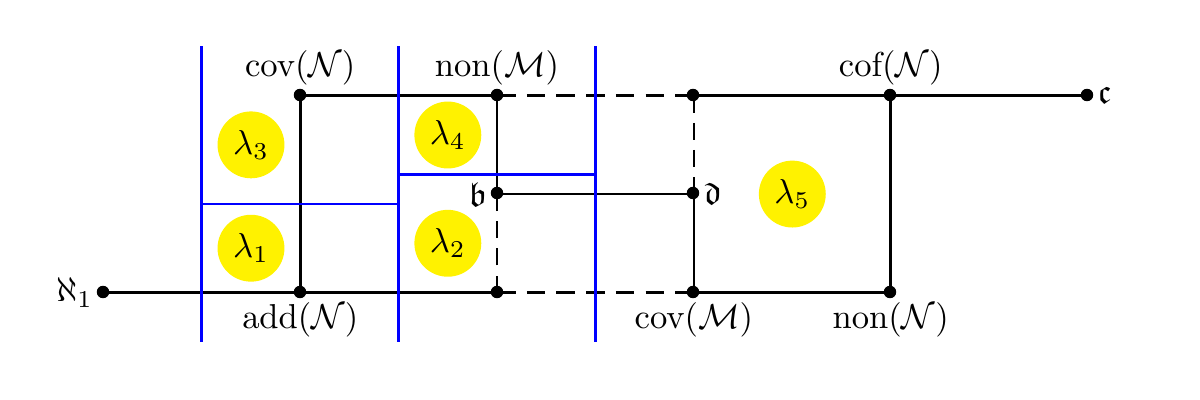}
  \caption{Alternative left side of Cicho\'n's diagram.}
  \label{KST}
\end{center}
\end{figure}

\begin{theorem}[{\cite{BCM}}]\label{leftBCM}
 Let $\lambda_0\leq\lambda_1\leq\lambda_2\leq\lambda_3\leq\lambda_4\leq\lambda_5$ be uncountable regular cardinals and let $\lambda_6\geq\lambda_5$ be a cardinal such that $\lambda_6^{{<}\lambda_3}=\lambda_6$, then there is a ccc poset that forces 
 \begin{enumerate}[label=(\alph*)]
     \item $\Rbf_i\eqT\Cbf_{[\lambda_6]^{<\lambda_i}\cap V}\eqT[\lambda_6]^{<\lambda_i}\cap V$ for $1\leq i\leq 3$, and
     \item $\lambda_4\leqT\Rbf_4$, $\lambda_5\leqT\Rbf_4$ and $\Rbf_4\leqT\lambda_5\times\lambda_4$.
 \end{enumerate}
 In particular, it forces the constellation in \autoref{BCM}.
\end{theorem}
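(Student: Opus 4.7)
The plan is to build a two-dimensional matrix iteration of ccc posets in the style of Brendle--Cardona--Mej\'ia~\cite{BCM}. Fix the vertical height to be $\lambda_4$ (this will control $\non(\Mwf)$) and the horizontal length to be $\nu:=\lambda_6\cdot\lambda_5$ in ordinal product (this separates $\cov(\Mwf)=\lambda_5$ from $\cfrak=\lambda_6$). Partition $\lambda_6\smallsetminus\{0\}$ into cofinal subsets $\la C_i:\,1\leq i\leq 3\ra$ of size $\lambda_6$, and write $\eta_\rho:=\lambda_6\rho$ for $\rho<\lambda_5$.

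At each horizontal stage $\xi=\eta_\rho+\varepsilon$ I would define the matrix iterand as in the proof of \autoref{mod1}: at $\varepsilon=0$, use an \emph{ultrafilter-limit} iterand built from a coherent system of ultrafilters $\la\Uwf_\alpha:\,\alpha<\lambda_4\ra$ running along the rows (an $\mathrm{Fr}$-linked ccc poset of eventually-different-like type); for $\varepsilon\in C_i$ ($i=1,2,3$) use $\LOCor^{\dot N_\xi}$, $\Bor^{\dot N_\xi}$, and $\Dor^{\dot N_\xi}$ respectively, where $\dot N_\xi$ is a name for a transitive model of $\thzfc$ of size $<\lambda_i$. Book-keeping as in \autoref{mod1} and \autoref{mod3} guarantees that every $<\lambda_i$-size set of reals appearing in any row gets covered by such an $N_\xi$.

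For the left-side values I would argue exactly as in \autoref{mod1}: all iterands are $\lambda_i$-$\Rbf_i$-good for $i=1,2,3$ (\autoref{exm:good}, noting that $\mathrm{Fr}$-linked posets are good by~\cite{Mvert,BCM}), so \autoref{PreEUB} forces $\Cbf_{[\nu]^{<\lambda_i}}\leqT\Rbf_i$, while \autoref{itsmallsets} yields the reverse Tukey connection, and \autoref{cccgenext} together with \autoref{lem:thetaid} identifies both with $[\lambda_6]^{<\lambda_i}\cap V$. For (b), $\lambda_5\leqT\Rbf_4$ follows from the cofinality-$\lambda_5$ structure of the horizontal iteration via the Cohen reals that appear at the limit stages $\eta_\rho$ (\autoref{Cohenlimit}); dually, $\lambda_4\leqT\Rbf_4$ is witnessed by the diagonal sequence of reals added by the ultrafilter-limit iterands, each of which is $\Rbf_4$-unbounded over all strictly lower rows. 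The product upper bound $\Rbf_4\leqT\lambda_5\times\lambda_4$ is then read off the matrix: every Borel meager set coded in the top extension is determined by a pair (column~$\beta$, row~$\alpha$).

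The principal obstacle is designing the ultrafilter-limit iterands so that simultaneously (i) they add $\Rbf_4$-unbounded reals over every proper initial row, giving $\non(\Mwf)\geq\lambda_4$ and the product Tukey bound, and (ii) they remain $\lambda_i$-$\Rbf_i$-good for $1\leq i\leq 3$, so the left-hand-side values are not disturbed. This is the technical heart of \cite{BCM}: one verifies that a carefully chosen coherent sequence $\la\Uwf_\alpha\ra$ of ultrafilters produces an $\mathrm{Fr}$-linked iterand whose ultrafilter-limit across the matrix behaves correctly, and then a $\COB/\EUB$-style book-keeping (\autoref{remCOB}) pins down the Tukey reduction $\Rbf_4\leqT\lambda_5\times\lambda_4$.
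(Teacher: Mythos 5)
The paper itself does not prove this theorem --- it is quoted directly from~\cite{BCM} --- and your outline follows essentially the route of that source: a matrix iteration of height $\lambda_4$ and horizontal length $\lambda_6\lambda_5$, with the restricted posets $\LOCor^{\dot N_\xi}$, $\Bor^{\dot N_\xi}$, $\Dor^{\dot N_\xi}$ handling $\Rbf_1,\Rbf_2,\Rbf_3$ exactly as in \autoref{mod1} (goodness plus \autoref{PreEUB} and \autoref{itsmallsets}), and ultrafilter limits on $\mathrm{Fr}$-linked $\Eor$-type iterands carrying the weight of part~(b). One imprecision is worth correcting: the family witnessing $\lambda_4\leqT\Rbf_4$ (equivalently $\Cbf_{[\lambda_4]^{<\lambda_4}}\leqT\Mg$ via \autoref{lem:TukeyCtheta}~(b)) is not the diagonal of generics added by the ultrafilter-limit iterands --- those generics are $\Mg$-\emph{dominating} over their rows and, together with the fact that every real of the final model lands in some $V_{\alpha,\eta_\rho}$, they give the upper bound $\Rbf_4\leqT\lambda_5\times\lambda_4$ --- but rather the $\lambda_4$-many Cohen reals added in the vertical direction of the matrix. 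Moreover, being $\Rbf_4$-unbounded over the lower rows at the moment of creation is not sufficient: the whole content of the ultrafilter-limit machinery in \cite{BCM} is that this vertical family \emph{remains} strongly unbounded after all later columns (including the further $\Eor$-stages, which would otherwise cover it by a single meager set). Granting that preservation lemma, which you explicitly defer to \cite{BCM}, your outline is the intended proof.
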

\begin{figure}[H]
\begin{center}
  \includegraphics[scale=1]{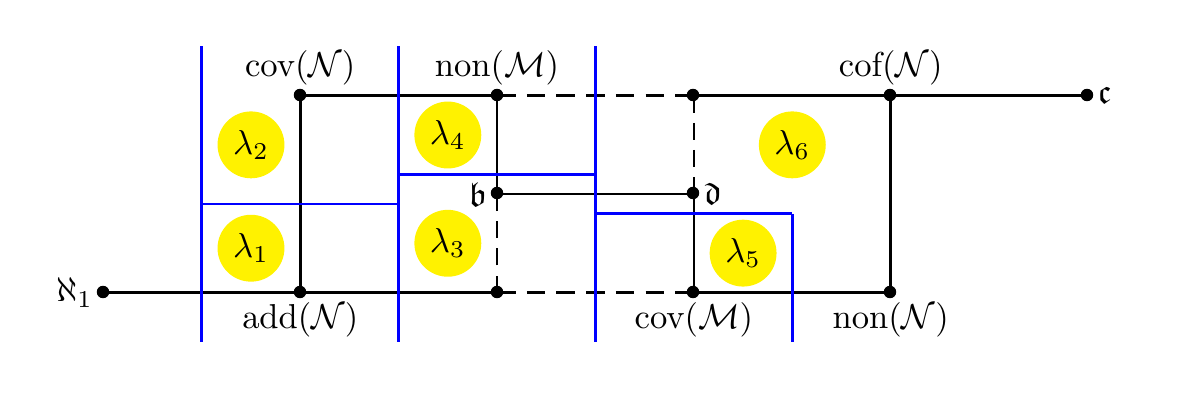}
  \caption{Seven values in Cicho\'n's digram.}
  \label{BCM}
\end{center}
\end{figure}

We remark that, in \autoref{leftBCM}, we cannot force $\Rbf_4\eqT\Cbf_{[\lambda_5]^{<\lambda_4}}$ because, in the ground model, $\Cbf_{[\lambda_5]^{<\lambda_4}}\not\leqT\lambda_5\times\lambda_4$ in the case when $\lambda_4<\lambda_5$. To see this, note that if $\la (a_i,b_i):\, i<\lambda_5\ra\subseteq\lambda_5\times\lambda_4$, then there is some $L\subseteq\lambda_5$ of size $\lambda_5$ such that the sequence $\la b_i:\, i\in L\ra$ is constant with value some $b<\lambda_4$. Then, it is possible to find some $a<\lambda_5$ such that $\{i\in L:\, (a_i,b_i)\leq(a,b)\}$ has size ${\geq}\lambda_4$, so we conclude that there is no Tukey connection by \autoref{lem:TukeyCtheta}~(b). On the other hand, we can say that $\Rbf_4\leqT\Cbf_{[\lambda_5]^{<\lambda_4}\cap V}$ because, in the ground model, $\lambda_5\times\lambda_4\leqT\Cbf_{[\lambda_5\times\lambda_4]^{<\lambda_4}}$.

\section{Restriction to submodels}\label{sec:restrmod}

We present the general theory of intersection of posets with $\sigma$-closed models. This is the main tool in~\cite{GKMS} to force Cicho\'n's maximum without using large cardinals. In this section we do not only review this method, but we analyze its effect on the Tukey order.
For this section, we fix:
\begin{enumerate}[label=(F\arabic*)]
    \item a ccc poset $\Por$;
    \item a definable relational system $\Rbf=\la X,Y,\sqsubset\ra$ of the reals, here wlog $X=Y=\omega^\omega$; and
    \item a large enough regular cardinal $\chi$ such that $\Por\in H_\chi$, and $H_\chi$ contains all the parameters defining $\Rbf$.
\end{enumerate}
 
\begin{definition}
  A model $N\preceq H_\chi$ is \emph{${<}\kappa$-closed} if $N^{<\kappa}\subseteq N$. 
  We write \emph{$\sigma$-closed} for ${<}\aleph_1$-closed.
\end{definition}

When intersecting a ccc poset with a $\sigma$-closed model, we obtain a completely embedded subforcing.

\begin{lemma}
 If $N\preceq H_\chi$ is $\sigma$-closed and $\Por\in N$, then $\Por\cap N\lessdot\Por$.
\end{lemma}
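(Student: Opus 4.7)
The plan is to verify three things in turn: that $\Por\cap N$ is a suborder of $\Por$, that compatibility in $\Por\cap N$ coincides with $\Por$-compatibility on pairs from $\Por\cap N$, and that every maximal antichain of $\Por\cap N$ is maximal in $\Por$. The first is immediate since the order is inherited. For the second, if $p,q\in\Por\cap N$ admit a common extension $r\in\Por$, then $H_\chi\models\exists r\in\Por\,(r\leq p\wedge r\leq q)$; since $p,q,\Por\in N$, elementarity reflects this to $N$ and yields a common extension inside $\Por\cap N$.

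The substantive part is the maximal-antichain condition, which I would obtain from the following reduction property: for every $p\in\Por$ there is some $q\in\Por\cap N$ such that every $r\leq q$ in $\Por\cap N$ is $\Por$-compatible with $p$. I would argue by contradiction. If $p$ has no reduction, then the set
\[I_p:=\{r\in\Por\cap N:\, r\perp p\}\]
is dense in $\Por\cap N$. Choose a maximal antichain $A\subseteq I_p$ of $\Por\cap N$; by density of $I_p$, such $A$ is automatically a maximal antichain of all of $\Por\cap N$.

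Now both hypotheses on $N$ come into play simultaneously. The ccc of $\Por$ forces $A$ to be countable, and $\sigma$-closedness of $N$, applied to any enumeration $\omega\to A\subseteq N$, places $A$ itself in $N$. Elementarity then delivers the contradiction in two exclusive cases: if $A$ is a maximal antichain of $\Por$ in $H_\chi$, then $p$ must be compatible with some $a\in A$, contradicting $A\subseteq I_p$; otherwise $H_\chi\models\exists p'\in\Por\,\forall a\in A\,(p'\perp a)$ and, since $A,\Por\in N$, this statement reflects to $N$, producing $p'\in\Por\cap N$ incompatible with every element of $A$, contradicting the maximality of $A$ in $\Por\cap N$.

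Finally, the reduction property delivers maximal-antichain preservation: given a maximal antichain $B$ of $\Por\cap N$ and any $p\in\Por$, a reduction $q\in\Por\cap N$ of $p$ is compatible in $\Por\cap N$ with some $b\in B$, and a common extension $r\in\Por\cap N$ satisfies $r\leq q$, so $r$, and hence $b$, is $\Por$-compatible with $p$. The main obstacle is precisely the orchestration in the middle step, dovetailing ccc (to keep $A$ countable), $\sigma$-closedness (to bring $A$ into $N$), and elementarity (to shuttle witnesses between $N$ and $H_\chi$); each of the three hypotheses is indispensable there.
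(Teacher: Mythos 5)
Your proof is correct. The paper states this lemma without any proof (it is a standard fact from \cite{GKMS}), so there is nothing to compare against; your argument is the classical one and every step checks out: transfer of compatibility by elementarity, the ccc of $\Por$ (fixed in the ambient hypotheses of the section) to make antichains of $\Por\cap N$ countable, and $\sigma$-closure to pull such an antichain into $N$ so that elementarity can be applied with it as a parameter. The only remark worth making is that your detour through the reduction property and the dense set $I_p$ is avoidable: the central portion of your argument, applied directly to an arbitrary maximal antichain $B$ of $\Por\cap N$ (which is an antichain of $\Por$ by your compatibility step, hence countable, hence in $N$), already yields that $B$ is maximal in $\Por$ --- if not, elementarity with parameters $B,\Por\in N$ produces $p'\in\Por\cap N$ incompatible with all of $B$, contradicting maximality in $\Por\cap N$. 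That shortcut proves the third condition for $\Por\cap N\lessdot\Por$ in one stroke; your version proves the reduction property first and then derives antichain preservation from it, which is equivalent but longer. Either way the dovetailing of ccc, $\sigma$-closure, and elementarity is exactly right.
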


Semantically, there is a correspondence between some $\Por\cap N$-names a $\Por$-names belonging to $N$, and we can also have a correspondence for the forcing relation for some formulas.

\begin{fact}
  If $\kappa>\aleph_0$ is regular and $N$ is ${<}\kappa$ closed then there is a one-to-one correspondence between:
  \begin{enumerate}[label=(\roman*)]
    \item $\Por$-names $\tau\in N$ and
    \item $\Por\cap N$-names $\sigma$
  \end{enumerate}
  of members of $H_\kappa$ (in particular, reals).  
  Thus, if $G$ is $\Por$-generic over $V$ then $N[G]\cap H_\kappa^{V[G]} =  H_\kappa^{V[G\cap N]}$.
\end{fact}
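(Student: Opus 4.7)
The plan is to carry out the argument in two stages: first establish the name correspondence via hereditarily nice names, then deduce the model equality.

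I recall that, since $\Por$ is ccc, every $\Por$-name for a member of $H_\kappa$ admits a hereditarily nice representation, built recursively from countable antichains of $\Por$ and lower-rank nice names. Concretely, for a name $\tau$ of a real one may take $\tau=\{(\check{(n,k)},p):p\in A_{n,k},\ n,k<\omega\}$ with each $A_{n,k}$ a countable antichain in $\Por$; for higher-rank members of $H_\kappa$, one builds inductively a name of hereditary cardinality ${<}\kappa$ whose constituents are previously constructed nice names paired with countable antichains. Note that $N$ is $\sigma$-closed since $\kappa>\aleph_0$ and $N$ is ${<}\kappa$-closed.

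For the forward direction, given $\tau\in N$ a $\Por$-name for an element of $H_\kappa$, by elementarity I replace $\tau$ with a hereditarily nice $\tau'\in N$ with the same interpretation. Every antichain $A$ appearing in $\tau'$ then belongs to $N$; being countable and an element of the $\sigma$-closed $N$, it satisfies $A\subseteq N$, so $A\subseteq \Por\cap N$. Proceeding by induction on rank, $\tau'$ is a $\Por\cap N$-name. Conversely, given a $(\Por\cap N)$-name $\sigma$ in hereditarily nice form for an element of $H_\kappa$, its pieces are countable antichains contained in $\Por\cap N\subseteq N$ together with lower-rank $(\Por\cap N)$-names that lie in $N$ by induction; since $\sigma$ has hereditary cardinality ${<}\kappa$ with all its elements in $N$, the ${<}\kappa$-closedness of $N$ yields $\sigma\in N$. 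This establishes the claimed one-to-one correspondence (modulo equivalence of names).

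For the model equality, let $G$ be $\Por$-generic over $V$, so that $G\cap N$ is $(\Por\cap N)$-generic over $V$ by the preceding lemma. If $a\in N[G]\cap H_\kappa^{V[G]}$, pick $\tau\in N$ a $\Por$-name for $a$; by the forward direction I may take $\tau$ to be a $(\Por\cap N)$-name, whence $a=\tau^G=\tau^{G\cap N}\in V[G\cap N]$, and absoluteness of rank gives $a\in H_\kappa^{V[G\cap N]}$. Conversely, if $a\in H_\kappa^{V[G\cap N]}$, pick a hereditarily nice $(\Por\cap N)$-name $\sigma$ for $a$; by the backward direction $\sigma\in N$, so $a=\sigma^G\in N[G]$, and again $a\in H_\kappa^{V[G]}$ by absoluteness.

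The main obstacle I anticipate is the recursive bookkeeping for higher-rank members of $H_\kappa$: for reals the argument is transparent since each antichain is countable and a single level suffices, but in the general case one must run a simultaneous induction on rank to guarantee that each hereditary piece has cardinality ${<}\kappa$, is a subset of $N$, and is individually in $N$. A secondary subtlety is the absoluteness of hereditary cardinality and rank between $V[G\cap N]$, $N[G]$, and $V[G]$, which is fine because transitive closure is absolute, $\kappa$ is unchanged, and $\Por$ is ccc (so $\aleph_1$ is preserved).
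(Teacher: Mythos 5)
The paper states this Fact without proof (it is one of several results in Section~4 imported from the restriction-to-submodels machinery of \cite{GKMS}), so there is no in-paper argument to compare against; your proposal supplies the standard proof, and it is correct. The two load-bearing points are exactly the ones you identify: (1) by ccc and regularity of $\kappa$, every name for a member of $H_\kappa$ can be replaced (inside $N$, by elementarity) by a hereditarily nice name of hereditary cardinality ${<}\kappa$ whose building blocks are countable antichains, and any countable (resp.\ ${<}\kappa$-sized) set that is an element of (resp.\ a subset of) the ${<}\kappa$-closed $N$ is a subset of (resp.\ an element of) $N$; and (2) evaluation of a $\Por\cap N$-name by $G$ agrees with evaluation by $G\cap N$ since $\Por\cap N\lessdot\Por$. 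Two small points worth making explicit if you write this up: the ``one-to-one correspondence'' must be read modulo forced equivalence of names, as you note; and in the inclusion $N[G]\cap H_\kappa^{V[G]}\subseteq H_\kappa^{V[G\cap N]}$ the name $\tau\in N$ you start from is only forced by some $p\in G$ (not necessarily in $N$) to denote a member of $H_\kappa$, so one should first replace $\tau$ inside $N$ by a name forced outright to lie in $H_\kappa$ (e.g.\ by defaulting to $\check\varnothing$ on the complementary part of a maximal antichain deciding ``$\tau\in H_\kappa$'', all of which is definable from $\tau$ and hence available in $N$) before invoking the correspondence. With those clarifications the argument is complete.
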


\begin{corollary}
  For absolute $\varphi(\bar x)$ (e.g. Borel on the reals) if $p\in\Por\cap N$ and $\bar\tau\in N$ is a finite sequence of $\Por$-names of members of $H_\kappa$, then
  \[p\Vdash_\Por \varphi(\bar\tau) \sii p\Vdash_{\Por\cap N}\varphi(\bar\sigma).\]
\end{corollary}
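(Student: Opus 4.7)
The plan is to translate the forcing relation into its semantic form and exploit the name correspondence from the preceding Fact, together with the absoluteness of $\varphi$ between the generic extensions $V[G\cap N]\subseteq V[G]$. The essential content of the bijection $\tau\leftrightarrow\sigma$ is that, whenever $G$ is $\Por$-generic over $V$, one has $\tau[G]=\sigma[G\cap N]$; this is the bridge that lets one pass between the two forcing relations. I would state and use this identity explicitly at the start.

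For the direction $(\Rightarrow)$, I would take a $\Por\cap N$-generic filter $G'$ over $V$ with $p\in G'$. Since $\Por\cap N\lessdot\Por$, there exists a $\Por$-generic $G$ over $V$ with $p\in G$ and $G\cap N=G'$. By hypothesis $V[G]\models\varphi(\bar\tau[G])$, and by the name correspondence $\bar\tau[G]=\bar\sigma[G']$. Since $\bar\sigma[G']\in V[G']\subseteq V[G]$ and $\varphi$ is absolute (any Borel statement on reals is $\Delta^1_1$ and hence absolute between transitive ZFC-models containing the relevant parameters, in particular between $V[G']$ and $V[G]$), this gives $V[G']\models\varphi(\bar\sigma[G'])$. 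As $G'$ was arbitrary with $p\in G'$, we conclude $p\Vdash_{\Por\cap N}\varphi(\bar\sigma)$.

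For the direction $(\Leftarrow)$, I would take an arbitrary $\Por$-generic $G$ over $V$ with $p\in G$ and set $G':=G\cap N$; by the lemma stating $\Por\cap N\lessdot\Por$, this $G'$ is $\Por\cap N$-generic over $V$ and $p\in G'$. The hypothesis gives $V[G']\models\varphi(\bar\sigma[G'])$, which via the identity $\bar\sigma[G']=\bar\tau[G]$ becomes $V[G']\models\varphi(\bar\tau[G])$. Absoluteness of $\varphi$ between $V[G']$ and $V[G]$ then yields $V[G]\models\varphi(\bar\tau[G])$, so $p\Vdash_\Por\varphi(\bar\tau)$.

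The main technical obstacle is the identity $\tau[G]=\sigma[G\cap N]$, which is exactly the content of the preceding Fact; one verifies it by induction on the rank of $\tau$, using that any maximal antichain below a condition in $\tau\in N$ is, by $\sigma$-closedness of $N$ and the ccc of $\Por$, a countable subset of $\Por\cap N$, so the recursive unfolding of $\tau$ can be carried out entirely inside $\Por\cap N$. The other delicate point is ensuring the appeal to absoluteness is legitimate: this is fine for Borel $\varphi$ because the Borel code lives in $V$, and both $V[G']$ and $V[G]$ interpret it coherently via any reals in $V[G']\subseteq V[G]$; more generally it applies whenever $\varphi$ is sufficiently absolute in the sense guaranteed by Shoenfield's theorem for parameters in $H_\kappa$.
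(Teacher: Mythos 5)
Your argument is correct and is exactly the intended derivation: the paper states this corollary without proof as an immediate consequence of the preceding Fact, and your semantic translation via the identity $\bar\tau[G]=\bar\sigma[G\cap N]$, the complete embedding $\Por\cap N\lessdot\Por$ (to pass between generics in both directions), and absoluteness of $\varphi$ between $V[G\cap N]$ and $V[G]$ is the standard way to carry it out. Your closing remarks on why the name correspondence holds (countable antichains living in $\Por\cap N$ by ccc and $\sigma$-closedness) and on the legitimacy of the absoluteness appeal address precisely the two points that need checking.
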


The following result illustrates the main motivation to intersect ccc posets with $\sigma$-closed models, since it affects the Tukey relations forced by the posets.

\begin{lemma}\label{fct:restrN}
  Let $N\preceq H_\chi$ be $\sigma$-closed and let $K=\la A,B,\lhd\ra$ be a relational system. Assume that $\Por$, $K$ and the parameters of $\Rbf$ are in $N$.
  \begin{enumerate}[label=(\alph*)]
    \item If $\Por\Vdash\Rbf\leqT K$ then $\Por\cap N\Vdash \Rbf\leqT K\cap N$ where $K\cap N:=\la A\cap N,B\cap N,\lhd\ra$.
    \item If $\Por\Vdash K\leqT \Rbf$ then $\Por\cap N\Vdash K\cap N\leqT \Rbf$.
  \end{enumerate}
\end{lemma}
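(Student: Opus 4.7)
The plan is to exploit elementarity of $N\preceq H_\chi$ to pull back the Tukey connection to names inside $N$, and then to invoke the correspondence between $\Por$-names in $N$ and $\Por\cap N$-names for reals, together with a ccc-nice argument for names of values in ground-model sets.

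For (a), since the statement $\Por\Vdash \Rbf\leqT K$ is expressible in $H_\chi$ using only parameters in $N$ (namely $\Por$, $K$, and the defining parameters of $\Rbf$), elementarity produces $\Por$-names $\dot\Psi_-,\dot\Psi_+\in N$ that $\Por$ forces to be a Tukey connection $\Rbf\to K$. Let $G'$ be $\Por\cap N$-generic over $V$ and extend it to a $\Por$-generic filter $G$. The functions $\Psi_\pm:=\dot\Psi_\pm^G$ then live in $N[G]$, and I will show that their restrictions yield the desired connection in $V[G']$. Given $x\in(\omega^\omega)^{V[G']}$, the name correspondence furnishes a $\Por$-name $\tau\in N$ with $\tau^G=x$; then $\dot\Psi_-(\tau)\in N$ is a $\Por$-name of an element of the ground-model set $A$. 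By a ccc-nice representation of this name inside $N$---a maximal antichain of $\Por$ lying in $N$ (hence in $\Por\cap N$) together with check-names of elements of $A\cap N$---the value $\Psi_-(x)$ automatically lies in $A\cap N$ and is already determined by $G'=G\cap N$. Symmetrically, for $b\in B\cap N$, the name $\dot\Psi_+(\check b)$ is a $\Por$-name in $N$ of a real, so by the correspondence it is equivalent to a $\Por\cap N$-name whose interpretation sits in $(\omega^\omega)^{V[G']}$. This defines functions $\Psi'_-\colon(\omega^\omega)^{V[G']}\to A\cap N$ and $\Psi'_+\colon B\cap N\to(\omega^\omega)^{V[G']}$ inside $V[G']$.

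To verify that $(\Psi'_-,\Psi'_+)$ is a Tukey connection $\Rbf\to K\cap N$, I would appeal to absoluteness: the implication $\Psi'_-(x)\lhd b\Rightarrow x\sqsubset\Psi'_+(b)$ is absolute between $V[G']$ and $V[G]$ (as $\sqsubset$ is analytic and $\lhd$ is just the restriction of a ground-model relation), and in $V[G]$ it holds by the choice of $\dot\Psi_\pm$. Part (b) is handled symmetrically, starting from names $\dot\Phi_-,\dot\Phi_+\in N$ witnessing $K\leqT\Rbf$: for $a\in A\cap N$, the name $\dot\Phi_-(\check a)$ is a $\Por$-name in $N$ of a real and so corresponds to a $\Por\cap N$-name interpreted in $(\omega^\omega)^{V[G']}$; whereas for $\tau\in N$ a $\Por$-name of a real, $\dot\Phi_+(\tau)$ is a $\Por$-name in $N$ of an element of $B$, whose ccc-nice representation places its interpretation in $B\cap N$.

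The main technical point I expect to spell out carefully is the ccc-nice representation step: that a $\Por$-name in $N$ of an element of a ground-model set $A$ is equivalent to a $\Por\cap N$-name whose values lie in $A\cap N$. This follows from $\Por$ being ccc (yielding a maximal antichain that decides the value) together with elementarity of $N$ (which guarantees such an antichain, and the check-values attached to it, can be chosen inside $N$). Once this is in hand, the absoluteness argument finishes both parts cleanly.
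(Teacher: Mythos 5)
Your proposal is correct and follows essentially the same route as the paper's proof: elementarity of $N$ to pull the witnessing names into $N$, the correspondence between $\Por$-names in $N$ and $\Por\cap N$-names of reals, a decision argument placing the $A$-valued side in $A\cap N$, and absoluteness of the analytic relation $\sqsubset$ to transfer the connection down to the $\Por\cap N$-extension. The only difference is packaging --- you argue semantically with a generic $G\supseteq G'$ and ccc-nice names, whereas the paper works syntactically with the forcing relation, finding for each $p\in\Por\cap N$ a $q\leq p$ in $N$ deciding $i_{\dot x}\in A\cap N$ --- but the substance is identical.
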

\begin{proof}
\noindent(a): Find a sequence $\la\dot y_j:\, j\in B\ra\in N$ of $\Por$-names of members of $\omega^\omega$ such that
\[\Vdash_\Por\forall\, x\in \omega^\omega\ \exists\, i_x\in A\ \forall\, j\in B\colon i_x\lhd j \imp x \sqsubset \dot y_j.\]
For $j\in B\cap N$, $\dot y_j$ can be seen as a $\Por\cap N$-name of a member of $\omega^\omega$.

We claim that
$\Vdash_{\Por\cap N}\forall\, x\in \omega^\omega\ \exists\, i_x\in A\cap N\ \forall\, j\in B\cap N\colon i_x\lhd j \imp x \sqsubset \dot y_j$.
Let $p\in\Por\cap N$ and let $\dot x$ be a $\Por\cap N$-name of a real. Then $\dot x\in N$ and
\[N\models p\Vdash_\Por \exists\, i_{\dot x}\in A\ \forall\, j\in B\colon i_{\dot x}\lhd j \imp \dot x \sqsubset \dot y_j.\] 
Find $q\leq p$ in $N$ and $i_{\dot x}\in A\cap N$ such that
\[\forall\, j\in B\cap N \colon i_{\dot x}\lhd j \imp N\models q\Vdash_\Por \dot x \sqsubset \dot y_j.\] 
But $N\models q\Vdash_\Por \dot x \sqsubset \dot y_j \sii q\Vdash_\Por \dot x \sqsubset \dot y_j\sii q\Vdash_{\Por\cap N} \dot x \sqsubset \dot y_j$.  
Thus $\la \dot y_j:\, j\in B\cap N\ra$ witnesses $\Por\cap N\Vdash\Rbf\leqT K\cap N$.\smallskip

\noindent(b): If $\Por\Vdash K\leqT \Rbf$ then $\Por\Vdash \Rbf^\perp\leqT K^\perp$. Although $\Rbf$ is not as in \autoref{def:defrel}, the relation $\sqsubset$ is absolute enough to prove $\Por\Vdash \Rbf^\perp\leqT K^\perp\cap N$ as in~(a). Note that $K^\perp\cap N=\la B\cap N, A\cap N, \ntriangleright\ra=(K\cap N)^\perp$, so we can conclude that $\Por\cap N\Vdash K\cap N\leqT \Rbf$.
\end{proof}

As a consequence, if $\Por\Vdash\Rbf\eqT K$ then $\Por\cap N\Vdash \Rbf\eqT K\cap N$. Hence, to know the values that $\Por\cap N$ force to $\bfrak(\Rbf)$ and $\dfrak(\Rbf)$, we need to calculate the cardinal characteristics of the relational system $K\cap N$ (recall \autoref{cccgenext0}), or find natural Tukey equivalent relational systems. The theory developed from now on has the purpose to understand $K\cap N$ in some specific contexts. In the applications, $K$ is often a directed preorder.

\begin{fact}\label{fct:ScapN}
  Let $N\preceq H_\chi$ be $\sigma$-closed and let $K=\la A,B,\lhd\ra\in N$ be a relational system.
  \begin{enumerate}[label=(\alph*)]
    \item $\bfrak(K\cap N)\leq|\bfrak(K)\cap N|$ and $\dfrak(K\cap N)\leq|\dfrak(K)\cap N|$.
    
    \item If $N$ is ${<}\kappa$-closed then $\bfrak(K\cap N)\geq\min\{\bfrak(K),\kappa\}$.    
          In particular, if $N$ is ${<}\bfrak(K)$-closed then $\bfrak(K\cap N)=\bfrak(K)$.
          
    \item Property (b) holds for the $\dfrak$-numbers.
          
     \item If $S\in N$ is a directed preorder and $\dfrak(S)\subseteq N$, then $S\cap N\eqT S$.
  \end{enumerate}
\end{fact}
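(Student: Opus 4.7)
The plan is to exploit two features repeatedly: the elementarity of $N\preceq H_\chi$, which lets us move existential statements between $N$ and $V$, and the ${<}\kappa$-closure, which guarantees that subsets of $N$ of size less than $\kappa$ actually belong to $N$. I treat the four parts in order.

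For part (a), I would apply elementarity to fix witnessing families: pick $F,D\in N$ with $N\models$ ``$F\subseteq A$ is $K$-unbounded of size $\bfrak(K)$'' and ``$D\subseteq B$ is $K$-dominating of size $\dfrak(K)$.'' By elementarity this actually holds in $V$, and using the bijections (in $N$) between $F$ and $\bfrak(K)$, resp.\ $D$ and $\dfrak(K)$, one gets $|F\cap N|\le|\bfrak(K)\cap N|$ and $|D\cap N|\le|\dfrak(K)\cap N|$. The point is that $F\cap N$ remains $K\cap N$-unbounded: given $y\in B\cap N$, unboundedness of $F$ yields some $x\in F$ with $\neg(x\lhd y)$, and elementarity (since $F,y\in N$) provides such $x$ in $F\cap N$. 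The dominating case is symmetric.

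For parts (b) and (c), I would argue directly from closure. Given $F\subseteq A\cap N$ with $|F|<\min\{\bfrak(K),\kappa\}$, the ${<}\kappa$-closure gives $F\in N$. Since $|F|<\bfrak(K)$, in $V$ there exists $y\in B$ with $x\lhd y$ for every $x\in F$; by elementarity (all parameters are in $N$) some such $y$ lies in $B\cap N$, bounding $F$ inside $K\cap N$. Hence $\bfrak(K\cap N)\ge\min\{\bfrak(K),\kappa\}$, and combined with~(a) the ``in particular'' clause follows since $|\bfrak(K)\cap N|\le\bfrak(K)$. Part~(c) is the exact dual: for $D\subseteq B\cap N$ with $|D|<\min\{\dfrak(K),\kappa\}$, closure gives $D\in N$, non-dominance of $D$ in $K$ supplies an unbounded $x\in A$, and elementarity relocates it to $A\cap N$.

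For part (d), the key step is the one I expect to be slightly subtle: I want to show that $S\cap N$ is \emph{cofinal} in $S$, not merely of large cardinality. By elementarity fix $D\in N$ with $N\models$ ``$D\subseteq S$ is cofinal and $|D|=\dfrak(S)$''; and a bijection $f\colon\dfrak(S)\to D$ with $f\in N$. Since the hypothesis gives $\dfrak(S)\subseteq N$ and $f\in N$, for each $\alpha<\dfrak(S)$ we have $f(\alpha)\in N$, so $D\subseteq S\cap N$. Thus $S\cap N$ contains a set cofinal in $S$. The Tukey equivalence is now routine: the identity $S\cap N\hookrightarrow S$ (paired with itself in the opposite component) witnesses $S\cap N\leqT S$, while for $S\leqT S\cap N$ the maps $\Psi_-\colon S\to S\cap N$ picking an upper bound in $D$ for each $x$, and $\Psi_+=\mathrm{id}_{S\cap N}$, work because $\Psi_-(x)\leq_S y$ implies $x\leq_S\Psi_-(x)\leq_S y=\Psi_+(y)$. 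The main obstacle in this argument is recognizing that ``$\dfrak(S)\subseteq N$'' is used precisely to force the witnessing cofinal set $D$ to sit inside $N$; without that, one only gets the cardinality estimate from~(a) and loses the Tukey equivalence.
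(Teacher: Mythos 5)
Your proposal is correct and follows essentially the same route as the paper: a dominating/unbounded family fixed inside $N$ by elementarity for (a), ${<}\kappa$-closure plus elementarity for (b)--(c), and cofinality of $S\cap N$ in $S$ obtained from $\dfrak(S)\subseteq N$ for (d) --- the paper merely economizes by proving only the $\dfrak$-halves and deriving the $\bfrak$-halves by applying them to $K^\perp$. One cosmetic slip: in (d) the map $\Psi_+\colon S\to S\cap N$ witnessing $S\cap N\leqT S$ cannot literally be the identity (it must land in $S\cap N$), but the same ``pick an upper bound in $D$'' map you use for the other direction repairs this immediately.
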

\begin{proof}
\noindent(a) Find $f\in N$, $f\colon \dfrak(K)\to B$ where $f[\dfrak(K)]$ is $K$-dominating.  
  So $\{f(\alpha):\, \alpha\in\dfrak(K)\cap N\}\subseteq B\cap N$ is $K\cap N$-dominating. Thus $\dfrak(K\cap N)\leq|\dfrak(K)\cap N|$. The inequality of the $\bfrak$-number follows by applying the previous to $K^\perp$.\smallskip

\noindent(b): If $F\subseteq A\cap N$ has size ${<}\min\{\bfrak(S),\kappa\}$ then
  $F\in N$ and
  $N\models \exists\, y\in B\ \forall\, x\in F\colon x\lhd y$,
  so such a $y$ can be found in $B\cap N$.\smallskip
  
\noindent(c): Apply (b) to $K^\perp$.\smallskip

\noindent(d): If $\dfrak(S)\subseteq N$ then $f[\dfrak(S)]\subseteq N$ where $f$ is as in (a), so $S\cap N$ is cofinal in $S$ and $S\cap N\eqT S$.
\end{proof}

\autoref{fig:collN} illustrates the situation of \autoref{fct:ScapN} when $K=S$ is a directed poset, 
$\delta_N:=\min\{\delta\in\On:\, \delta\notin N\}$ and $|N|<\delta_N$ (the latter will hold in our applications).

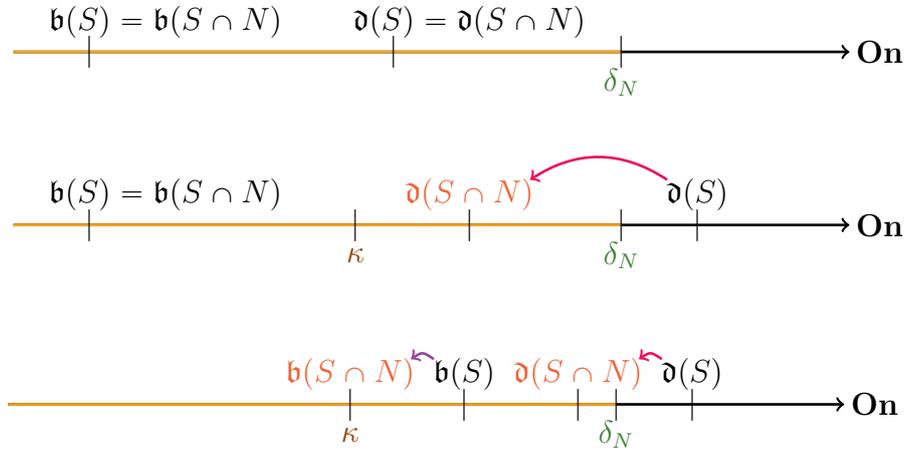
\begin{figure}[H]
\centering
\begin{tikzpicture}[line width=1pt]
\draw[->] (-1,0) -- (10,0);
\node at (10.4,0) {$\On$};
\draw[orange] (-1,0)--(7,0);
\node at (7,-0.4) {$\colive{ \delta_N}$};
\node at (7,0) {$|$};
\node at (5,0.4) {$\dfrak(S)=\dfrak(S\cap N)$};
\node at (4,0) {$|$};
\node at (1,0.4) {$\bfrak(S)=\bfrak(S\cap N)$};
\node at (0,0) {$|$};
\end{tikzpicture}

\vspace{15pt}

\begin{tikzpicture}[line width=1pt]
\draw[->] (-1,0) -- (10,0);
\node at (10.4,0) {$\On$};
\draw[orange] (-1,0)--(7,0);
\node at (7,-0.4) {$\colive{ \delta_N}$};
\node at (7,0) {$|$};
\node at (3.5,-0.4) {$\cmarron{\kappa}$};
\node at (3.5,0) {$|$};
\node at (1,0.4) {$\bfrak(S)=\bfrak(S\cap N)$};
\node at (0,0) {$|$};
\node at (8,0.4) {$\dfrak(S)$};
\node at (8,0) {$|$};
\node at (5,0.4) {$\cladri{\dfrak(S\cap N)}$};
\node at (5,0) {$|$};
\draw[fresa,->] (7.6,0.6) .. controls (7,1) and (6.4,1) .. (5.8,0.6);
\end{tikzpicture}

\vspace{25pt}

\begin{tikzpicture}[line width=1pt]
\draw[->] (-1,0) -- (10,0);
\node at (10.4,0) {$\On$};
\draw[orange] (-1,0)--(7,0);
\node at (7,-0.4) {$\colive{ \delta_N}$};
\node at (7,0) {$|$};
\node at (3.5,-0.4) {$\cmarron{\kappa}$};
\node at (3.5,0) {$|$};
\node at (3.5,0.4) {$\cladri{\bfrak(S\cap N)}$};
\node at (5,0.4) {$\bfrak(S)$};
\node at (5,0) {$|$};
\node at (8,0.4) {$\dfrak(S)$};
\node at (8,0) {$|$};
\node at (6.5,0.4) {$\cladri{\dfrak(S\cap N)}$};
\node at (6.5,0) {$|$};
\draw[fresa,->] (7.6,0.6) .. controls (7.45,0.7) .. (7.3,0.6);
\draw[purple,->] (4.6,0.6) .. controls (4.45,0.7) .. (4.3,0.6);
\end{tikzpicture}
\caption{Effect on $\bfrak(S)$ and $\dfrak(S)$ after intersecting with $N$ as in \autoref{fct:ScapN} when $|N|<\delta_N$ and $S=K$ is a directed preorder. The situation on the top corresponds to~(d), where the cardinal characteristics do not change; the middle corresponds to~(b), where $\bfrak(S\cap N)=\bfrak(S)$ but $\dfrak(S\cap N)$ gets smaller; and the situation at the bottom indicates that $\dfrak(S\cap N)$ gets smaller and that $\bfrak(S\cap N)$ may become smaller.}\label{fig:collN}
\end{figure}

The effect on the relational systems depends very much on the structure of the model. We look at models constructed from directed systems of models, as follows.

\begin{definition}
  Let $\kappa$ and $\theta$ be infinite cardinals, $\kappa$ uncountable regular, and let $T$ be a directed partial order without maximum.
  A sequence $\bar{N}:=\la N_t :\, t\in T\ra$ of elementary submodels of $H_\chi$ is a \emph{$(T,\kappa,\theta)$-directed system} if, for all $t\in T$:
  \begin{enumerate}[label=(\arabic*)]
    \item $N_t$ is ${<}\kappa$-closed and $|N_t|=\theta$;
    \item if $t\leq t'$ in $T$ then $N_t\subseteq N_{t'}$;
    \item $\theta\cup\{\theta,T\}\subseteq N_t$.
  \end{enumerate}
  In this context, we usually denote $N:=\bigcup_{t\in T}N_t$. Clearly $N\preceq H_\chi$.
\end{definition}

\begin{fact}
If $\bar{N}:=\la N_t :\, t\in T\ra$ is a $(T,\kappa,\theta)$-directed system then:
\begin{enumerate}[label=(\alph*)]
  \item $\theta^{<\kappa}=\theta$ (so $\kappa\leq\theta$).
  \item $N$ is ${<}\min\{\kappa,\bfrak(T)\}$-closed.
  \item $\theta\leq|N|\leq\theta\cdot\dfrak(T)$.
\end{enumerate}
\end{fact}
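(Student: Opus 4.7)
The plan is to prove each of the three items in turn, using only the defining clauses (1)--(3) of a $(T,\kappa,\theta)$-directed system together with the basic cardinal arithmetic of ${<}\kappa$-closed sets.

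For (a), I would first observe that, for any fixed $t\in T$, the set $N_t$ contains $\theta$ as a subset (by clause (3)) and is ${<}\kappa$-closed. Therefore, for every $\alpha<\kappa$, the set $\theta^{\alpha}=\{f:\alpha\to\theta\}$ is contained in $N_t^{<\kappa}\subseteq N_t$, so $|\theta^{\alpha}|\leq|N_t|=\theta$. Taking the supremum over $\alpha<\kappa$ gives $\theta^{<\kappa}=\theta$. The inequality $\kappa\leq\theta$ then follows, since if $\theta<\kappa$ we would have $2^\theta\leq\theta^\theta\leq\theta^{<\kappa}=\theta$, contradicting Cantor.

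For (b), let $\lambda:=\min\{\kappa,\bfrak(T)\}$ and take an arbitrary sequence $\bar x=\la x_i:\, i<\alpha\ra$ with $\alpha<\lambda$ and $x_i\in N$. For each $i<\alpha$ pick $t_i\in T$ with $x_i\in N_{t_i}$. Since $\alpha<\bfrak(T)$, the family $\{t_i:\, i<\alpha\}$ is $T$-bounded, i.e.\ there exists $t\in T$ with $t_i\leq t$ for every $i<\alpha$. By clause (2), $x_i\in N_{t_i}\subseteq N_t$ for all $i$, so $\bar x\in N_t^{<\kappa}\subseteq N_t\subseteq N$ by clause (1) and $\alpha<\kappa$.

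For (c), the lower bound is immediate since $|N|\geq|N_t|=\theta$ for any $t\in T$. For the upper bound, choose a cofinal subset $D\subseteq T$ of size $\dfrak(T)$; for each $t\in T$ there is $d\in D$ with $t\leq d$, so $N_t\subseteq N_d$ by clause (2), and hence $N=\bigcup_{d\in D} N_d$. This gives $|N|\leq|D|\cdot\theta=\dfrak(T)\cdot\theta$.

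The only mildly delicate point is in (b), where one must combine the two kinds of closure that the system enjoys: the individual ${<}\kappa$-closure of each $N_t$ and the ``horizontal'' closure provided by $\bfrak(T)$. Everything else is essentially bookkeeping from the definitions and the standard cardinal-arithmetic consequence of ${<}\kappa$-closure. I do not expect any serious obstacle.
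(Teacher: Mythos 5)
Your proof is correct and, for item (c), coincides exactly with the paper's own one-line argument (take a cofinal $D\subseteq T$ of size $\dfrak(T)$ and note $N=\bigcup_{d\in D}N_d$). The paper leaves (a) and (b) unproved, and the arguments you supply — embedding $\theta^{\alpha}$ into $N_t$ via ${<}\kappa$-closure and clause (3) for (a), and bounding the indices $t_i$ using $\alpha<\bfrak(T)$ before invoking ${<}\kappa$-closure of a single $N_t$ for (b) — are precisely the intended ones.
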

\begin{proof}
  To see (c) note that, if $T'\subseteq T$ witnesses $\dfrak(T)$, then $N=\bigcup_{t\in T'}N_t$.
\end{proof}

In our applications $|T|\leq\theta$, in which case (c) implies $|N|=\theta$.

The remaining results in this section are the main tools to understand $K\cap N$ when $N$ is obtained from a directed system.

\begin{lemma}\label{mainlem1}
  Let $\bar N=\la N_t:\, t\in T\ra$ be a $(T,\kappa,\theta)$-directed system.
  If $K=\la A,B,\lhd\ra$ is a relational system, $K\in N$, and
  \begin{itemize}
    \item[$(\heartsuit)$] $A\cap N_t$ is $K\cap N$-bounded for all $t\in T$,
  \end{itemize}
  then $K\cap N\leqT T$. 
\end{lemma}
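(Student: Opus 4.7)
The plan is to construct a Tukey connection $(\Psi_-,\Psi_+)\colon K\cap N \to T$ directly from $(\heartsuit)$ together with the defining properties of a directed system. Recall we must produce maps $\Psi_-\colon A\cap N \to T$ and $\Psi_+\colon T \to B\cap N$ such that $\Psi_-(a)\leq_T t$ implies $a\lhd \Psi_+(t)$.

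First, I would define $\Psi_-$ as follows. Since $N=\bigcup_{t\in T}N_t$, for each $a\in A\cap N$ one can pick some $t_a\in T$ with $a\in N_{t_a}$, and set $\Psi_-(a):=t_a$. Second, I would define $\Psi_+$ using $(\heartsuit)$: for each $t\in T$, the set $A\cap N_t$ is $K\cap N$-bounded, so one can pick some $y_t\in B\cap N$ such that $a\lhd y_t$ for every $a\in A\cap N_t$, and set $\Psi_+(t):=y_t$.

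To verify the Tukey condition, take any $a\in A\cap N$ and any $t\in T$ with $t_a\leq_T t$. The monotonicity clause of a directed system ($t'\leq t''$ in $T$ implies $N_{t'}\subseteq N_{t''}$) yields $a\in N_{t_a}\subseteq N_t$, whence the defining property of $y_t=\Psi_+(t)$ gives $a\lhd \Psi_+(t)$. Therefore $(\Psi_-,\Psi_+)$ witnesses $K\cap N\leqT T$, as required.

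There is really no hard step here; the whole proof is forced by the hypotheses once one unpacks the definitions. The single point worth highlighting is the reading of $(\heartsuit)$: the witnesses $y_t$ bounding $A\cap N_t$ in the relational system $K\cap N$ are required to lie in $B\cap N$, but not in any particular $N_s$. That is exactly why we obtain a Tukey reduction onto the whole index set $T$ rather than onto any single $N_t$, and why the monotonicity of $\bar N$ is used in an essential way.
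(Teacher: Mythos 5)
Your proof is correct and is essentially identical to the paper's: the paper defines the same maps $f\colon A\cap N\to T$ (choosing $f(i)$ with $i\in N_{f(i)}$) and $g\colon T\to B\cap N$ (a bound for $A\cap N_t$ from $(\heartsuit)$), and verifies the Tukey condition via the same monotonicity argument $N_{f(i)}\subseteq N_t$. No differences worth noting.
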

\begin{proof}
  Define $f\colon A\cap N\to T$ such that $i\in A\cap N_{f(i)}$ for $i\in A\cap N$, and
  define $g\colon T\to B\cap N$ such that $\forall\, i\in A\cap N_t\colon i\lhd g(t)$ (by $(\heartsuit)$).  
  If $f(i)\leq_T t$ then $i\in A\cap N_{f(i)}\subseteq A\cap N_t$, so $i\lhd g(t)$.
\end{proof}

\begin{fact}\label{largeb}
  In \autoref{mainlem1}, we must have $\bfrak(K)>\theta$.
\end{fact}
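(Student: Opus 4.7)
The plan is to argue by contradiction, assuming $\bfrak(K)\leq\theta$. Since $K\in N=\bigcup_{t\in T}N_t$, I fix any $t\in T$ large enough that $K\in N_t$. By elementarity, $\bfrak(K)\in N_t$ (it is definable from $K$), and $N_t$ witnesses the existence of a $K$-unbounded set $F\subseteq A$ of size $\bfrak(K)\leq\theta$; in particular there is some such $F\in N_t$.

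The crucial step is then to promote ``$F\in N_t$'' to ``$F\subseteq N_t$''. For this, I use elementarity to pick an enumeration $g\colon |F|\to F$ inside $N_t$. Because $|F|\leq\theta$ and $\theta\subseteq N_t$ (from clause~(3) of being a $(T,\kappa,\theta)$-directed system), each ordinal $\alpha<|F|$ belongs to $N_t$, whence $g(\alpha)\in N_t$ by elementarity. Hence $F\subseteq N_t$, and so $F\subseteq A\cap N_t$.

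At this point hypothesis $(\heartsuit)$ takes over: it supplies some $y\in B\cap N$ with $x\lhd y$ for every $x\in A\cap N_t$, and in particular for every $x\in F$. This exhibits $F$ as $K$-bounded (by the element $y\in B$), contradicting the choice of $F$ as $K$-unbounded. The main (mild) subtlety is precisely the passage from $F\in N_t$ to $F\subseteq N_t$, which hinges on the combination $|F|\leq\theta$ together with $\theta\subseteq N_t$; the remaining parts of the argument are pure elementarity plus the content of $(\heartsuit)$.
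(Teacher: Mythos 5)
Your proof is correct and follows essentially the same route as the paper's: locate $F\in N_t$ witnessing $\bfrak(K)$ by elementarity, use $|F|\leq\theta$ together with $\theta\subseteq N_t$ to conclude $F\subseteq A\cap N_t$, and contradict $(\heartsuit)$. The only difference is that you spell out the enumeration argument for passing from $F\in N_t$ to $F\subseteq N_t$, which the paper leaves implicit.
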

\begin{proof}
  Since $K\in N$, $\exists\, t\in T\colon K\in N_t$,
  so we can find a witness $F\in N_t$ of $\bfrak(K)$.  
  If $|F|=\bfrak(S)\leq\theta$ then $F\subseteq A\cap N_t$ (because $\theta\subseteq N_t$), 
  so $A\cap N_t$ is unbounded, which contradicts $(\heartsuit)$.
\end{proof}

\begin{fact}\label{suffheart}
  If $\bfrak(K)>\theta$ and $N_t\in N$ for all $t\in T$, then $(\heartsuit)$ follows.
\end{fact}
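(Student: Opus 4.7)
The plan is to verify $(\heartsuit)$ using elementarity, exploiting the fact that each $A \cap N_t$ is small enough to be $K$-bounded and is itself visible inside $N$.

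Fix $t \in T$ and let me first observe that $|A \cap N_t| \le |N_t| = \theta < \bfrak(K)$. By the very definition of $\bfrak(K)$, this means there exists some $y \in B$ with $x \lhd y$ for every $x \in A \cap N_t$; that is, $A \cap N_t$ is $K$-bounded in $V$. The remaining task is to find such a witness $y$ lying in $B \cap N$, so that $A \cap N_t$ is bounded in $K \cap N$.

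The key observation for this upgrade is that $A \cap N_t$ is itself an element of $N$. Indeed, $K = \la A, B, \lhd\ra \in N$ entails $A \in N$, and by the standing hypothesis $N_t \in N$ as well. Since $N \preceq H_\chi$, the intersection $A \cap N_t$, being definable from the parameters $A$ and $N_t$, also belongs to $N$. Now the statement
\[
H_\chi \models \exists\, y \in B\ \forall\, x \in (A \cap N_t)\colon x \lhd y
\]
uses only the parameters $A \cap N_t$, $B$, and $\lhd$, all of which lie in $N$. By elementarity this existential statement is reflected in $N$, yielding $y \in B \cap N$ bounding $A \cap N_t$, which is exactly $(\heartsuit)$.

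There is essentially no obstacle to surmount here beyond bookkeeping the elementarity argument: the hypothesis $N_t \in N$ is precisely what promotes the trivial boundedness in $V$ (given by $\bfrak(K) > \theta$) into boundedness inside $N$. Without $N_t \in N$ one could not feed $A \cap N_t$ into $N$ as a parameter, and the reflection step would fail.
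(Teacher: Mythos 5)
Your argument is correct and is exactly the intended one: the paper states this as a Fact without proof, but your reflection argument (using $N_t\in N$ and $A\in N$ to get $A\cap N_t\in N$, then bounding it inside $N$ by elementarity since $|A\cap N_t|\leq\theta<\bfrak(K)$) mirrors the paper's own proof of \autoref{fct:ScapN}~(b), where closure plays the role that $N_t\in N$ plays here. No gaps.
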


\begin{corollary}\label{cor1}
Under the assumptions of \autoref{mainlem1}, if in addition $S=K$ is a directed preorder without maximum and $T$ is a linear order, then $S\cap N\eqT T$.
\end{corollary}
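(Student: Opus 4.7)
The plan is to derive the missing inequality $T \leqT S \cap N$, since $S \cap N \leqT T$ follows at once from \autoref{mainlem1}. The natural candidate for a Tukey connection $(\Psi_-,\Psi_+) \colon T \to S \cap N$ is $\Psi_-(t) := s_t$, where $s_t \in S \cap N$ is an upper bound of $S \cap N_t$ (available by $(\heartsuit)$), and $\Psi_+(s) := t(s)$ for some $t(s) \in T$ with $s \in N_{t(s)}$. The Tukey condition to check is: if $s_t \leq_S s$, then $t \leq_T t(s)$. Using the linearity of $T$, if this failed one would have $t(s) <_T t$ and hence $s \in N_{t(s)} \subseteq N_t$, so $s \leq_S s_t$; combined with $s_t \leq_S s$, this forces $s \equiv_S s_t$ in the preorder but does not, by itself, produce a contradiction.

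The main technical step is therefore to refine $s_t$ into a \emph{strict} upper bound $s'_t \in S \cap N$ of $S \cap N_t$, in the sense that no element of $S \cap N_t$ lies $\geq_S s'_t$. This is exactly where the ``no maximum'' hypothesis on $S$ and the fact $S \in N$ are used. Starting from $s_t$, I would apply elementarity of $N \preceq H_\chi$ to the statement ``$S$ has no maximum'' to extract some $s^* \in S \cap N$ with $s^* \not\leq_S s_t$, and then use directedness of $S$ (again via elementarity) to pick $s'_t \in S \cap N$ with $s_t \leq_S s'_t$ and $s^* \leq_S s'_t$. This $s'_t$ still bounds $S \cap N_t$, and if some $s \in S \cap N_t$ satisfied $s'_t \leq_S s$, then $s^* \leq_S s'_t \leq_S s \leq_S s_t$ would contradict $s^* \not\leq_S s_t$. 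Replacing $s_t$ by $s'_t$ in the definition of $\Psi_-$, the Tukey verification now closes: assuming $t(s) <_T t$ gives $s \in S \cap N_t$, so $s'_t \not\leq_S s$ by strictness, contradicting $s'_t \leq_S s$.

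Combining this Tukey connection with \autoref{mainlem1} yields $S \cap N \eqT T$. The chief obstacle is precisely the upgrade from ``upper bound'' to ``strict upper bound''; without it, the Tukey condition breaks down in the borderline case $s \equiv_S s_t$. Linearity of $T$ is used only once (to convert $\neg(t \leq_T t(s))$ into $t(s) <_T t$), and the closure/elementarity properties of $N$ supply exactly the ``room above'' $s_t$ inside $S \cap N$ that is needed to promote the bound to a strict one.
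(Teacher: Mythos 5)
Your proof is correct and follows essentially the same route as the paper's: both arguments upgrade the bound $g(t)$ of $S\cap N_t$ supplied by $(\heartsuit)$ to a \emph{strict} upper bound in $S\cap N$ (using elementarity together with directedness and the absence of a maximum), and then verify the Tukey connection $T\leqT S\cap N$ using the linearity of $T$ in exactly the same spot. The only difference is that you spell out the construction of the strict bound, which the paper compresses into the phrase ``we can even define $g$ such that $i<_S g(t)$''.
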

\begin{proof}
Consider the functions $f$ and $g$ from the proof of \autoref{mainlem1}. Since $S$ does not have a maximum, we can even define $g$ such that $i<_S g(t)$ for all $i\in S\cap N_t$. Thus, $(g,f)\colon T\to S\cap N$ is a Tukey connection: if $g(t)\leq_S j$ in $S\cap N$, then $\forall\, i\in S\cap N_t\colon i<_S j$, so $j\notin N_t$, hence $t<_T f(j)$.  
\end{proof}

\autoref{fig:largeb1} illustrates the situation of \autoref{mainlem1} when $K=S$ is a directed preorder and $|T|\leq \theta$ (so $|\delta_N|=|N|=\theta$), while \autoref{fig:largeb2} illustrates \autoref{cor1}.

\begin{figure}[ht]
\centering
\begin{tikzpicture}[line width=1pt]
\draw[->] (-1,0) -- (10,0);
\node at (10.4,0) {$\On$};
\draw[orange] (-1,0)--(7,0);
\node at (7,-0.4) {$\colive{\delta_N}$};
\node at (7,0) {$|$};
\node at (1,0) {$|$};
\node at (1,0.4) {$\bfrak(T)$};
\node at (3,0) {$|$};
\node at (3,0.4) {$\cladri{\bfrak(S\cap N)}$};
\node at (5,0.4) {$\cladri{\dfrak(S\cap N)}$};
\node at (5,0) {$|$};
\node at (8,0.4) {$\bfrak(S)$};
\node at (8,0) {$|$};
\node at (6.5,0.4) {$\dfrak(T)$};
\node at (6.5,0) {$|$};
\end{tikzpicture}
\caption{When $K=S$ is a directed preorder and $|T|\leq \theta$, according to \autoref{mainlem1} $S\cap N\leqT T$, so the cardinal characteristics associated with $S\cap N$ lie between those associated with $T$.}\label{fig:largeb1}
\end{figure}
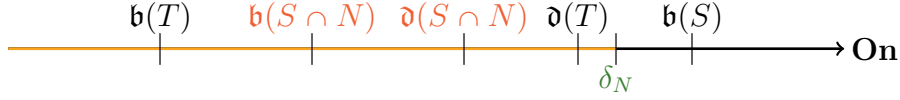

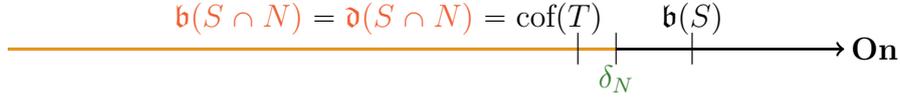
\begin{figure}[ht]
\centering
\begin{tikzpicture}[line width=1pt]
\draw[->] (-1,0) -- (10,0);
\node at (10.4,0) {$\On$};
\draw[orange] (-1,0)--(7,0);
\node at (7,-0.4) {$\colive{\delta_N}$};
\node at (7,0) {$|$};
\node at (8,0.4) {$\bfrak(S)$};
\node at (8,0) {$|$};
\node at (4,0.4) {$\cladri{\bfrak(S\cap N)}=\cladri{\dfrak(S\cap N)}=\cof(T)$};
\node at (6.5,0) {$|$};
\end{tikzpicture}
\caption{In the situation of \autoref{cor1} (when $|T|\leq\theta$), $S\cap N\eqT T$, so the cardinal characteristics associated with $S\cap N$ collapse to $\cof(T)$.}\label{fig:largeb2}
\end{figure}

We finish with a result about the intersection of a directed system of models with a chain of models.

\begin{lemma}\label{lem:2dir}
  Let $\bar N^0=\la N^k_t:\, t\in T\ra$ be a $(T,\kappa^0,\theta^0)$-directed system, and let
  $\bar N^1:=\la N^1_\alpha:\, \alpha<\lambda\ra$ be a $(\lambda,\kappa^1,\theta^1)$-directed system with $\lambda$ a limit ordinal. Assume:
  \begin{enumerate}[label=(\roman*)]
    \item $\bar N^0\in N^1_0$ and $\theta^1<\kappa^0$ (which implies $\kappa^1\leq\theta^1<\kappa^0\leq\theta^0$),
    \item $N^1_\alpha\in N^1$ for all $\alpha<\lambda$, and
    \item 
    $T\subseteq N^1_0$ (which implies $|T|\leq\theta_1$).
  \end{enumerate}
  Then:
  \begin{enumerate}[label=(\alph*)]
    \item $\bar N:=\la N_\eta:\, \eta\in\Lambda\ra$ is a $(\Lambda,\kappa^1,\theta^1)$-directed system, where
        \begin{itemize}
          \item $\Lambda:= T\times\lambda$,
          \item $N_\eta:=N^0_{\eta(0)}\cap N^1_{\eta(1)}$ for $\eta\in\Lambda$. Hence, $N=N^0\cap N^1$.
        \end{itemize}

    \item If $K=\la A,B,\lhd\ra\in N^0\cap N^1_0$ is a relational system and $\bfrak(K)>\theta^1$ then $K\cap N\leqT\Lambda$.
    In particular $\min\{\bfrak(T),\cf(\lambda)\}\leq\bfrak(K\cap N)$ and $\dfrak(K\cap N)\leq\max\{\dfrak(T),\cf(\lambda)\}$.
  \end{enumerate}
\end{lemma}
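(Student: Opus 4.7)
My plan is to establish (a) by a direct verification of the four axioms of a $(\Lambda,\kappa^1,\theta^1)$-directed system, and then derive (b) by applying \autoref{mainlem1} after checking the hypothesis $(\heartsuit)$ via \autoref{suffheart}.

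For (a), the non-trivial point is elementarity of each $N_\eta:=N^0_{\eta(0)}\cap N^1_{\eta(1)}$ in $H_\chi$. Here I would argue via Tarski--Vaught: hypothesis~(i) together with $T\subseteq N^1_0\subseteq N^1_{\eta(1)}$ gives $N^0_{\eta(0)}\in N^1_{\eta(1)}$, so given a formula $\varphi(x,\bar a)$ with $\bar a\in N_\eta$ and $H_\chi\models\exists x\,\varphi(x,\bar a)$, elementarity of $N^0_{\eta(0)}$ supplies a witness in $N^0_{\eta(0)}$; then elementarity of $N^1_{\eta(1)}$ applied to the relativized statement ``$\exists x\in N^0_{\eta(0)}\,\varphi(x,\bar a)$'' produces a witness inside $N^0_{\eta(0)}\cap N^1_{\eta(1)}=N_\eta$. ${<}\kappa^1$-closure is inherited from the ${<}\kappa^1$-closure of $N^1_{\eta(1)}$ (using $\kappa^1\leq\kappa^0$ so $N^0_{\eta(0)}$ is also ${<}\kappa^1$-closed); size $\theta^1$ follows from $|N^1_{\eta(1)}|=\theta^1$ together with $\theta^1\subseteq N_\eta$ (here $\theta^1<\kappa^0\leq\theta^0\subseteq N^0_{\eta(0)}$). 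Monotonicity and $\theta^1\cup\{\theta^1,\Lambda\}\subseteq N_\eta$ are routine, and $N=N^0\cap N^1$ is immediate from directedness of both component systems.

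For (b), by \autoref{mainlem1} it is enough to verify $(\heartsuit)$: every $A\cap N_\eta$ is $K\cap N$-bounded. By \autoref{suffheart} applied with $\theta=\theta^1$ (noting $\bfrak(K)>\theta^1$ by assumption), this reduces to showing $N_\eta\in N$ for every $\eta\in\Lambda$. To see $N_\eta\in N^0$: the set $N_\eta$ is a subset of $N^0_{\eta(0)}$ of cardinality $\theta^1<\kappa^0$, so any enumeration of it is a sequence of length ${<}\kappa^0$ of elements of $N^0_{\eta(0)}$, and ${<}\kappa^0$-closure together with elementarity places $N_\eta\in N^0_{\eta(0)}\subseteq N^0$. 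To see $N_\eta\in N^1$: hypothesis~(ii) provides $\beta<\lambda$ with $N^1_{\eta(1)}\in N^1_\beta$, while $N^0_{\eta(0)}\in N^1_0\subseteq N^1_\beta$ since $\bar N^0\in N^1_0$ and $\eta(0)\in T\subseteq N^1_0$; hence the intersection $N_\eta$ is definable from parameters inside $N^1_\beta$, so $N_\eta\in N^1_\beta\subseteq N^1$. The ``in particular'' clause then follows from \autoref{fct:prodrel}~(b) together with $\bfrak(\lambda)=\dfrak(\lambda)=\cf(\lambda)$ (\autoref{basicdir}, applied to $\lambda$ as a linear order without maximum).

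The main obstacle I anticipate is purely bookkeeping: keeping straight which of $\bar N^0$, $N^0_{\eta(0)}$, $N^1_{\eta(1)}$, $T$ and the indices $\eta(0),\eta(1)$ belongs to which outer model at each step. Once this inside/outside tracking is fixed, every individual step reduces to a routine application of Tarski--Vaught, ${<}\kappa$-closure, or \autoref{fct:prodrel}.
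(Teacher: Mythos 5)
Your proof is correct. Part (a) is essentially identical to the paper's argument: elementarity of $N_\eta$ via $N^0_{\eta(0)}\in N^1_{\eta(1)}$, the cardinality computation, and closure. In part (b) you take a slightly different route to $(\heartsuit)$. The paper verifies $(\heartsuit)$ directly: after arranging $K\in N^0_{\eta(0)}$, it notes that $\bfrak(K\cap N^0_{\eta(0)})\geq\min\{\bfrak(K),\kappa^0\}>\theta^1$ by \autoref{fct:ScapN}~(b) and that $A\cap N_\eta\in N^1$ has size ${\leq}\theta^1$, so $N^1$ sees a bound in $B\cap N^0_{\eta(0)}$, which then lies in $B\cap N$. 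You instead invoke the sufficient criterion \autoref{suffheart}, which forces you to prove the stronger statement $N_\eta\in N$ (not merely $A\cap N_\eta\in N^1$); the extra cost is the step $N_\eta\in N^0$, which you discharge correctly using ${<}\kappa^0$-closure of $N^0_{\eta(0)}$ together with $|N_\eta|=\theta^1<\kappa^0$, while your argument that $N_\eta\in N^1$ coincides with the paper's use of hypotheses (i)--(iii). Both verifications are sound and feed into \autoref{mainlem1} in the same way; the paper's is marginally leaner because it never needs $N_\eta$ itself to be an element of $N^0$, whereas yours has the mild advantage of reusing a lemma already stated (\autoref{suffheart}) rather than redoing the elementarity computation by hand. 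The ``in particular'' clause via \autoref{fct:prodrel}~(b) and \autoref{basicdir} is handled the same way in both.
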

\begin{proof}
\noindent(a): Fix $\eta\in\Lambda$. Note that $N_\eta\preceq H_\chi$ because $N^0_{\eta(0)},N^1_{\eta(1)}\preceq H_\chi$ and $N^0_{\eta(0)}\in N^1_{\eta(1)}$ by~(i) and~(iii). On the other hand,
since $N^0_{\eta(0)}\in N^1_{\eta(1)}$ and $|N^0_{\eta(0)}|=\theta^0$, we get $|N^0_{\eta(0)}\cap N^1_{\eta(1)}|=|\theta^0\cap N^1_{\eta(0)}|=\theta^1$. Clearly, $N_\eta$ is ${<}\kappa^1$-closed, so we can easily conclude that $\bar N$ is a $(\Lambda,\kappa^1,\theta^1)$-directed system. Note that $N:=\bigcup_{\eta\in\Lambda}N_\eta=N^0\cap N^1$.\smallskip

\noindent(b): Let $\eta\in\Lambda$, wlog $K\in N^0_{\eta(0)}$ (by increasing $\eta(0)$). Since $\bfrak(K\cap N^0_{\eta(0)})\geq\min\{\bfrak(K),\kappa_0\}>\theta^1$ by \autoref{fct:ScapN}~(b) and (i), and $A\cap N_\eta\in N^1$ (by (ii)) has size ${\leq}\theta^1$,
\[N^1\models \exists\, y\in B\cap N^0_{\eta(0)}\ \forall\, x\in A\cap N_\eta\colon x\lhd y,\]
so we can pick such $y\in B\cap N^0_{\eta(0)}\cap N^1\subseteq N$.
Hence $(\heartsuit)$ of \autoref{mainlem1} holds, thus $K\cap N\leqT\Lambda$.
\end{proof}

\section{Cicho\'n's maximum}\label{sec:cmax}

We fix cardinals ordered as in \autoref{fig:cichoncoll}, all of them regular with the possible exception of $\lambda^\cfrak$. Applying \autoref{gksmax}, we first construct a ccc poset $\Por$ forcing $\cfrak=\theta_\infty$ and the constellation at the top with Tukey connections, namely, $\Rbf_i\eqT S_i$ for all $1\leq i\leq 4$, where $S_i:=[\theta_\infty]^{<\theta_i}\cap V$ is a directed partial order.

\begin{figure}[ht]
\centering
\begin{tikzpicture}[xscale=2/1]
\footnotesize{
\node (addn) at (0,3.5){$\theta_1$};
\node (covn) at (0,5.5){$\theta_2$};
\node (nonn) at (3,3.5) {$\theta_\infty$} ;
\node (cfn) at (3,5.5) {$\theta_\infty$} ;
\node (addm) at (1,3.5) {$\bullet$} ;
\node (covm) at (2,3.5) {$\theta_\infty$} ;
\node (nonm) at (1,5.5) {$\theta_4$} ;
\node (cfm) at (2,5.5) {$\bullet$} ;
\node (b) at (1,4.5) {$\theta_3$};
\node (d) at (2,4.5) {$\theta_\infty$};
\node (c) at (4,5.5) {$\theta_\infty$};

\draw (addn) edge[->] node[left] {$\theta^-_2$} (covn);
\draw[gray]
      (covn) edge [->] (nonm)
      (nonm)edge [->] (cfm)
      (cfm)edge [->] (cfn)
      (addn) edge [->]  (addm)
      (addm) edge [->]  (covm)
      (addm) edge [->] (b)
      (d)  edge[->] (cfm)
      (b) edge [->] (d)
      (nonn) edge [->]  (cfn)
      (cfn) edge[->] (c)
      (covm) edge [->] (d)
      (covm) edge [->]  (nonn);

\draw (b)  edge [->] node[left] {$\theta^-_4$} (nonm);

\draw[dashed] (covn) edge[->] node[below] {$\theta^-_3$} (b);
\draw[dashed] (nonm) edge[->] (covm);

\node (aleph1) at (-1,0) {$\aleph_1$};
\node (addn-f) at (0,0){$\lambda_1^\bfrak$};
\node (covn-f) at (0,2){$\lambda_2^\bfrak$};
\node (nonn-f) at (3,0) {$\lambda_2^\dfrak$} ;
\node (cfn-f) at (3,2) {$\lambda_1^\dfrak$} ;
\node (addm-f) at (1,0) {$\bullet$} ;
\node (covm-f) at (2,0) {$\lambda_4^\dfrak$} ;
\node (nonm-f) at (1,2) {$\lambda_4^\bfrak$} ;
\node (cfm-f) at (2,2) {$\bullet$} ;
\node (b-f) at (1,1) {$\lambda_3^\bfrak$};
\node (d-f) at (2,1) {$\lambda_3^\dfrak$};
\node (c-f) at (4,2) {$\lambda^{\cfrak}$};

\draw[dashed] (c-f) edge[->] node[above] {$\theta^-_1$} (addn);
\draw (aleph1) edge[->] (addn-f)
      (addn-f) edge[->] (covn-f)
      (nonn-f) edge [->]  (cfn-f)
      (b-f)  edge [->] (nonm-f)
      (covm-f) edge [->] (d-f)
      (cfn-f) edge[->] (c-f);

\draw[gray]
   (covn-f) edge [->] (nonm-f)
   (nonm-f)edge [->] (cfm-f)
   (cfm-f)edge [->] (cfn-f)
   (addn-f) edge [->]  (addm-f)
   (addm-f) edge [->]  (covm-f)
   (covm-f) edge [->]  (nonn-f)
   (addm-f) edge [->] (b-f)
   (d-f)  edge[->] (cfm-f)
   (b-f) edge [->] (d-f);

\draw[dashed] (covn-f) edge [->] (b-f)
              (nonm-f) edge [->] (covm-f)
              (d-f) edge [->] (nonn-f);

}
\end{tikzpicture}
\caption{Strategy to force Cicho\'n's maximum: we construct a ccc poset $\Por$ forcing the constellation at the top, and find a $\sigma$-closed model $N$ such that $\Por\cap N$ forces the constellation at the bottom.}\label{fig:cichoncoll}
\end{figure}
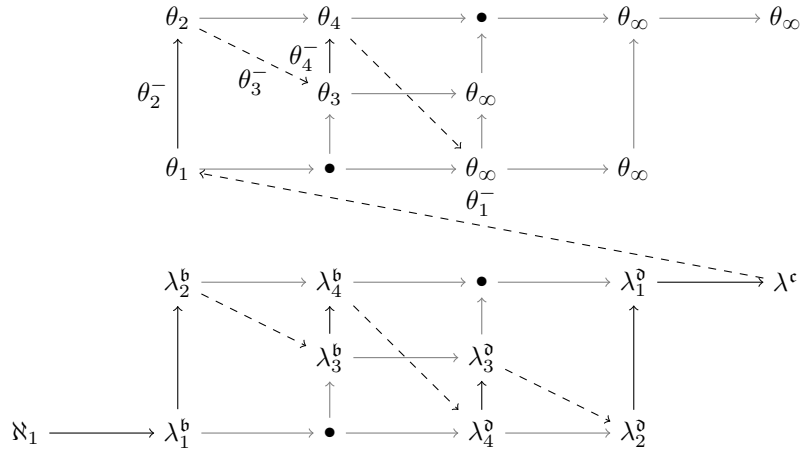

Afterwards, we apply the theory of \autoref{sec:restrmod} to construct a $\sigma$-closed $N\preceq H_\chi$, where $\chi$ is chosen regular large enough, such that $\Por\cap N$ forces the Cicho\'n's maximum constellation at the bottom. By \autoref{fct:restrN} we obtain that $\Por\cap N$ forces $\Rbf_i\eqT S_i\cap N$, so we need to construct $N$ such that $\bfrak(S_i\cap N)=\lambda_i^\bfrak$ and $\dfrak(S_i\cap N)=\lambda_i^\dfrak$. We will have $|N|=\lambda^\cfrak$, so $\Por\cap N$ will force that $\cfrak=\lambda^\cfrak$.

The strategy is to construct several chains of elementary submodels of $H_\chi$ and intersect them.
To proceed, we fix the following assumptions and conventions:

\begin{enumerate}[label=(H\arabic*)]
  \item\label{hyp1} Cardinals ordered as in \autoref{fig:cichoncoll}, non-decreasing up to $\lambda^\cfrak$ and increasing from there.
  \item\label{hyp2} With the possible exception of $\lambda^\cfrak$, all cardinals are regular. But we assume $(\lambda^\cfrak)^{\aleph_0}=\lambda^\cfrak$.
  \item\label{hyp3} The cardinals $\theta_i$ ($1\leq i\leq 4$) and $\theta_\infty$ satisfy the hypothesis of \autoref{gksmax}.
  \item\label{hyp4} For every $1\leq i\leq 4$, $\theta_i^{\theta^-_i}=\theta_i$ and $(\theta_i^-)^{<\theta^-_i}=\theta^-_i$.
  \item\label{hyp5} All models from now on contain as elements all the cardinals in \ref{hyp1}.
  \item\label{hyp6} Every new model contains, as elements, all the chains of models previously defined.
\end{enumerate}

More concretely, we prove:

\begin{theorem}\label{thmcmax}
Under assumptions \ref{hyp1}--\ref{hyp4}, there is a ccc poset forcing, for $1\leq i\leq 4$:
\begin{enumerate}[label=(\alph*)]
    \item $\Rbf_i\leqT\prod_{j=i}^4\lambda^\dfrak_j\times\lambda^\bfrak_j$,
    \item $\lambda^\bfrak_j\leqT\Rbf_i$ and $\lambda^\dfrak_j\leqT\Rbf_i$ when $i\leq j\leq 4$, and
    \item $\cfrak=\lambda^\cfrak$.
\end{enumerate}
\end{theorem}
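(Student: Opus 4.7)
The plan is to combine \autoref{gksmax} with the restriction-to-submodels technique of \autoref{sec:restrmod}. First, by \ref{hyp3} apply \autoref{gksmax} to $\theta_1 \leq \theta_2 \leq \theta_3 \leq \theta_4 \leq \theta_\infty$ to obtain a ccc poset $\Por$ forcing $\cfrak = \theta_\infty$ and $\Rbf_i \eqT S_i := [\theta_\infty]^{<\theta_i} \cap V$ for $1 \leq i \leq 4$. Then construct in $V$ a $\sigma$-closed $N \preceq H_\chi$ such that $\Por \cap N$ witnesses the theorem; by \autoref{fct:restrN} it forces $\Rbf_i \eqT S_i \cap N$, so it suffices to arrange in $V$ that (a$'$) $S_i \cap N \leqT \Lambda_i := \prod_{j=i}^4 \lambda^\dfrak_j \times \lambda^\bfrak_j$, (b$'$) $\lambda^\bfrak_j, \lambda^\dfrak_j \leqT S_i \cap N$ for $i \leq j \leq 4$, and (c$'$) $|N| = \lambda^\cfrak$.

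Construct four nested directed systems $\bar N^k = \la N^k_t : t \in T^k \ra$ in the order $k = 4, 3, 2, 1$, where $T^k := \lambda^\dfrak_k \times \lambda^\bfrak_k$ carries the product directed order (so $\bfrak(T^k) = \lambda^\bfrak_k$ and $\dfrak(T^k) = \lambda^\dfrak_k$). Using \ref{hyp4}, make each $\bar N^k$ a $(T^k, \theta^-_k, \theta^-_k)$-directed system, where the $\theta^-_k$ are chosen so that $\lambda^\cfrak = \theta^-_1 < \theta^-_2 < \theta^-_3 < \theta^-_4$ and $\theta_{k-1} < \theta^-_k < \theta_k$ for each $k \geq 2$. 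By \ref{hyp5}--\ref{hyp6}, arrange that $\bar N^{k+1}$ is an element of $N^k_{t_0}$ for every $t_0 \in T^k$. Set $N := \bigcap_{k=1}^4 N^k$; iterating \autoref{lem:2dir} exhibits $N$ as the union of a $(T^4 \times T^3 \times T^2 \times T^1, \theta^-_1, \lambda^\cfrak)$-directed system, in particular $|N| = \lambda^\cfrak$, settling (c$'$).

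For (a$'$), given $1 \leq i \leq 4$ and $(t^4, \ldots, t^i) \in \Lambda_i$, let $G_i(t^4, \ldots, t^i) := \bigcup \bigl( S_i \cap N^4_{t^4} \cap \cdots \cap N^i_{t^i} \bigr)$. Since $|N^i_{t^i}| = \theta^-_i < \theta_i = \cf(\theta_i)$ this union has size ${<}\theta_i$ and so lies in $S_i$; it also lies in $N$ because for $k' \leq i$ its defining parameters $N^k_{t^k}$ (with $i \leq k \leq 4$) are in $N^{k'}$ via elementarity and \ref{hyp6}, while for $k' > i$ it is a ${<}\theta_i$-sized subset of $\theta^-_i \subseteq N^{k'}$ absorbed by the ${<}\theta^-_{k'}$-closure of $N^{k'}$ (using $\theta_i \leq \theta_{k'-1} < \theta^-_{k'}$). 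Paired with the map $F_i(A) := (t^4, \ldots, t^i)$, where $t^k$ is chosen so that $A \in N^k_{t^k}$, this exhibits the Tukey connection $S_i \cap N \leqT \Lambda_i$. For (b$'$), given $j \geq i$, use the directed structure of $\bar N^j$ inside $N$ to produce, via \autoref{lem:TukeyCtheta}(b), a sequence in $S_i \cap N$ indexed by $T^j$ that no element of $S_i \cap N$ bounds along a cofinal subset (any such bound would concentrate more than $\theta^-_j$ information about $\bar N^j$, which is impossible by the size of $N^j_t$); projecting onto the two coordinates of $T^j$ gives $\lambda^\bfrak_j, \lambda^\dfrak_j \leqT T^j \leqT S_i \cap N$.

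The main obstacle is the delicate cardinal bookkeeping ensuring that (a$'$), (b$'$), and (c$'$) hold simultaneously: namely, that the hypotheses of \autoref{lem:2dir} apply at every combination step ($\theta^-_{k-1} < \theta^-_k$ and $T^{k+1} \subseteq N^k_{t_0}$), that the coherent bound $G_i$ belongs to $N$ in both the elementarity regime $k' \leq i$ and the closure regime $k' > i$ (the latter depending on the strict inequality $\theta_i < \theta^-_{k'}$, which follows from $\theta_i \leq \theta_{k'-1} < \theta^-_{k'}$), and that $|N| = \lambda^\cfrak$. These constraints are simultaneously achievable precisely because \ref{hyp4} furnishes intermediate cardinals $\theta^-_k$ sitting between $\theta_{k-1}$ and $\theta_k$ and satisfying $(\theta^-_k)^{<\theta^-_k} = \theta^-_k$, which is what allows the construction of the four ${<}\theta^-_k$-closed directed systems of the prescribed sizes.
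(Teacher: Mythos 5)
Your overall strategy matches the paper's: apply \autoref{gksmax} to get $\Por$ forcing $\Rbf_i\eqT S_i=[\theta_\infty]^{<\theta_i}\cap V$, then restrict to a $\sigma$-closed $N$ built as an intersection of directed systems of submodels, and use \autoref{fct:restrN} to reduce everything to computing $S_i\cap N$. The divergence — and the gap — is in how the systems are built. You use \emph{one} directed system per level $k$, indexed by the product $T^k=\lambda^\dfrak_k\times\lambda^\bfrak_k$, with \emph{all} models of the same size $\theta^-_k$ and the same degree of closure. The paper instead uses \emph{two linearly indexed chains} per level: a $(\lambda^\dfrak_k,(\theta^-_k)^+,\theta_k)$-system with models of size $\theta_k$, and a $(\lambda^\bfrak_k,\theta^-_k,\theta^-_k)$-system with models of size $\theta^-_k<\theta_k$. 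This is not a cosmetic difference: it is exactly what makes part (b) work.

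The mechanism for (b) in the paper is that two distinct regular cardinals $\theta_j$ and $\theta^-_{j+1}$ in $[\theta_i,\theta_\infty]$ satisfy $\theta_j\leqT S_i$ and $\theta^-_{j+1}\leqT S_i$ already in $V$ (\autoref{impEUB}), and the two chains at level $j$, having models of the two different sizes $\theta^-_j<\theta_j$, force (via \autoref{cor1}, which requires a \emph{linear} index order) $\theta_j\cap N\eqT\lambda^\bfrak_j$ and $\theta^-_{j+1}\cap N\eqT\lambda^\dfrak_j$, whence both $\lambda^\bfrak_j\leqT S_i\cap N$ and $\lambda^\dfrak_j\leqT S_i\cap N$. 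With your single system of uniform model size $\theta^-_j$ there is only one collapse threshold per level: for any regular $\theta>\theta^-_j$ the trace $\theta\cap N^j$ is a linear order with a \emph{single} cofinality lying in $[\lambda^\bfrak_j,\lambda^\dfrak_j]$, and nothing in your construction forces two different cardinals to realize the two different values $\lambda^\bfrak_j\ne\lambda^\dfrak_j$. Your direct attempt to show $T^j\leqT S_i\cap N$ does not close this hole either: \autoref{lem:TukeyCtheta}(b) controls the \emph{cardinality} of the set $\{t: x_t\subseteq A\}$, not its boundedness in the product order $\lambda^\dfrak_j\times\lambda^\bfrak_j$, and a bound of size $\theta^-_j$ is useless here since $\theta^-_j\ge\lambda^\cfrak\ge\lambda^\dfrak_j\ge\lambda^\bfrak_j$; moreover ``not cofinal'' is strictly weaker than ``bounded'' in a product order. (To make a product-indexed system that distinguishes the two coordinates you would have to build it as the intersection of two chains with different model sizes along the two coordinates — which is the paper's construction.) Two smaller issues: setting $\theta^-_1=\lambda^\cfrak$ clashes with \ref{hyp2}, since $\lambda^\cfrak$ may be singular while the closure parameter of a directed system must be uncountable regular; and \autoref{lem:2dir} as stated combines a directed system with a system indexed by a limit \emph{ordinal}, so iterating it with both factors product-indexed needs a (routine but unstated) generalization. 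Parts (a) and (c) of your argument are essentially sound and in the spirit of \autoref{mainlem1} and \autoref{lem:2dir}, modulo the slip that $G_i(\bar t)$ is a small subset of $\theta_\infty$, not of $\theta^-_i$.
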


We present two proofs. The first one is a short compact proof, and the second is the argument step by step, showing how cardinal characteristics are modified.

\begin{proof}[Proof (compact version)] 
By~\ref{hyp3}, find a ccc poset $\Por$ as in \autoref{gksmax}, i.e.\ forcing $\cfrak=\theta_\infty$ and $\R_i\eqT S_i$ for all $1\leq i\leq 4$. On the other hand, we have Tukey relations of regular cardinals with $S_i$ and the values of its associated cardinal characteristics as in \autoref{tbl0}.

\begin{table}[ht]
\centering
\begin{tabular}{c||c|c|c}
     & (regular) & & \\
 $i$ & $\theta\leqT S_i$ & $\bfrak(S_i)$ & $\dfrak(S_i)$ \\
\hline\hline

$4$ & $[\theta_4,\theta_\infty]$ & $\theta_4$ & $\theta_\infty$ \\

\hline

$3$ & $[\theta_3,\theta_\infty]$ &$ \theta_3$ & $\theta_\infty$\\

\hline

$2$ & $[\theta_2,\theta_\infty]$ & $\theta_2$ & $\theta_\infty$\\

\hline

$1$ & $[\theta_1,\theta_\infty]$ & $\theta_1$ & $\theta_\infty$\\

\end{tabular}
\caption{Values of the cardinal characteristics of $S_i$ and Tukey connections from regular cardinals (consequence of \autoref{impEUB}).}\label{tbl0}
\end{table}

By downwards recursion on $1\leq i\leq 4$, we construct chains of models $\bar{N}^\dfrak_i:=\la N^\dfrak_{i,\alpha}:\, \alpha<\lambda^\dfrak_i\ra$ and $\bar{N}^\bfrak_i:=\la N^\bfrak_{i,\alpha}:\, \alpha<\lambda^\bfrak_i\ra$ satisfying:
\begin{enumerate}[label=(\roman*)]
    \item $\bar{N}^\dfrak_i$ is a $(\lambda^\dfrak_i,(\theta^-_i)^+,\theta_i)$-directed system;
    \item $\bar{N}^\bfrak_i$ is a $(\lambda^\bfrak_i,\theta^-_i,\theta^-_i)$-directed system;
    \item each $N^\dfrak_{i,\alpha}$ contains, as elements: the cardinals of \autoref{fig:cichoncoll},  $\Por$, $\la N^\dfrak_{i,\xi}:\, \xi<\alpha\ra$, and the sequences $\bar N^\dfrak_j$ and $\bar N^\bfrak_j$ for all $i<j\leq4$;
    \item each $N^\bfrak_{i,\alpha}$ contains, as elements: the cardinals of \autoref{fig:cichoncoll},  $\Por$, $\la N^\bfrak_{i,\xi}:\, \xi<\alpha\ra$, and the sequences $\bar{N}^\dfrak_i$, $\bar N^\dfrak_j$ and $\bar N^\bfrak_j$ for all $i<j\leq4$.
\end{enumerate}
Assumptions \ref{hyp2} and~\ref{hyp4} are what allow the construction of the models. Note that~(iii) and~(iv) obey~\ref{hyp5} and~\ref{hyp6}, and these imply that the models contain, as elements, $S_i$ and the parameters of the definition of $\Rbf_i$ for all $1\leq i\leq 4$.

Finally, let $N^\cfrak\preceq H_\chi$ be a $\sigma$-closed model of size $\lambda^\cfrak$ containing, as elements, everything we have so far (this is possible because $(\lambda^\cfrak)^{\aleph_0}=\lambda^\cfrak$, see~\ref{hyp2}). We let $N:=N^\cfrak\cap\bigcap_{j=1}^4 N^\dfrak_j\cap N^\bfrak_j$, and show that $\Por\cap N$ is as desired. To show (a) and (b), note that $\Por\cap N$ forces $\Rbf_i\eqT S_i\cap N$ for all $1\leq i\leq 4$, hence it is enough to show that
\begin{enumerate}[label=(\alph*')]
   \item $S_i\cap N\leqT\Lambda_i:=\prod_{j=i}^4\lambda^\dfrak_j\times\lambda^\bfrak_j$, and
   \item $\lambda^\bfrak_j\leqT S_i\cap N$ and $\lambda^\dfrak_j\leqT S_i\cap N$ when $i\leq j\leq 4$.
\end{enumerate}
Item (c) follows because $|\Por\cap N|=\lambda^\cfrak$.\smallskip

\noindent(a'): Let $\Lambda'_i:=\lambda^\dfrak_i\times\prod_{j=i+1}^4 \lambda^\dfrak_j\times\lambda^\bfrak_j$ (when $i=4$, just let $\Lambda'_4:=\lambda^\dfrak_4$), which is a relational system. As in the proof of \autoref{lem:2dir}~(a), the intersection of the chain of models $\bar{N}^\dfrak_i$ with $\bar N^\dfrak_j$ and $\bar N^\bfrak_j$ for all $i<j\leq4$ yields a $(\Lambda'_i,(\theta^-_i)^+,\theta_i)$ directed system $\bar N'_i$. Since $\bfrak(S_i)=\theta_i>\theta^-_i$, by \autoref{lem:2dir} applied to $\bar N^0=\bar N'_i$ and $\bar N^1=\bar N^\bfrak_i$, we obtain a $(\Lambda_i,\theta^-_i,\theta^-_i)$-directed system $\bar N_i$ such that $S_i\cap N_i\leqT \Lambda_i$ where $N_i=\bigcap_{j=i}^4 N^\dfrak_i\cap N^\bfrak_i$.

Now, by regressive induction on $1\leq j<i$, we show that $S_i\cap N_j\eqT S_i\cap N_i$. Assume we have the result for $j+1$ (which we showed for $j+1=i$). Since $\dfrak(S_i\cap N_{j+1})\leq\dfrak(\Lambda_i)=\lambda^\dfrak_i\subseteq N^\dfrak_j$, by \autoref{fct:ScapN}~(d) we obtain $S_i\cap N_{j+1}\cap N^\dfrak_j\eqT S_i\cap N_{j+1}\eqT S_i\cap N_i$. For the same reason, we get $S_i\cap N_j=S_i\cap N_{j+1}\cap N^\dfrak_j\cap N^\bfrak_j\eqT S_i\cap N_{j+1}\cap N^\dfrak_j\eqT S_i\cap N_i$.

Finally, by applying \autoref{fct:ScapN}~(d), we conclude $S_i\cap N= S_i\cap N_1\cap N^\cfrak\eqT S_i\cap N_1\eqT S_i\cap N_i\leqT \Lambda_i$.\smallskip

\noindent(b'): For $1\leq j\leq 4$ we consider $\bar N'_j$ and $\bar N_j$ as defined in the previous argument. Fix $i\leq j\leq 4$. We have that $\theta_j\leqT S_i$ and $\theta^-_{j+1}\leqT S_i$ (denote $\theta^-_5:=\theta_\infty$), which imply $\theta_j\cap N\leqT S_i\cap N$ and $\theta^-_{j+1}\cap N\leqT S_i\cap N$.
So it is enough to show that $\theta_j\cap N\eqT\lambda^\bfrak_i$ and $\theta^-_{j+1}\cap N\eqT\lambda^\dfrak_j$.

Since $\theta_\infty>\theta_4=|N^\dfrak_4|$, by \autoref{cor1} applied to $\bar N^\dfrak_4$ we get $\theta_\infty\cap N^\dfrak_4\eqT\lambda^\dfrak_4$, showing $\theta^-_{j+1}\cap N'_j\eqT\lambda^\dfrak_j$ for $j=4$; in the case $j<4$, since $\theta^-_{j+1}=|N_{j+1}|$, we get $\theta^-_{j+1}\cap N_{j+1}\eqT\theta^-_{j+1}$ by \autoref{fct:ScapN}~(d) (even equality holds), but $|N^\dfrak_j|=\theta_j<\theta^-_{j+1}$, so \autoref{cor1} applied to $\bar N^\dfrak_j$ implies $\theta^-_{j+1}\cap N'_j\eqT \lambda^\dfrak_j$.

Back to $j\leq 4$: since $\theta_j=|N'_j|$ we have $\theta_j\cap N'_j\eqT\theta_j$ by \autoref{fct:ScapN}~(d) (even equality holds). Now $\theta_j>\theta_j^-=|N^\bfrak_j|$, so by \autoref{cor1} applied to $\bar N^\bfrak_j$ we obtain $\theta_j\cap N_j\eqT \lambda^\bfrak_j$. On the other hand, $\theta^-_{j+1}\cap N_j\eqT \theta^-_{j+1}\cap N'_j\eqT \lambda^\dfrak_j$ by \autoref{fct:ScapN}~(d). Now, using \autoref{fct:ScapN}~(d), it is easy to show by decreasing recursion on $1\leq k<j$ that $\theta_j\cap N_k\eqT\lambda^\bfrak_j$ and $\theta^-_{j+1}\cap N_k\eqT \lambda^\dfrak_j$. For the same reason, $\theta_j\cap N=\theta_j\cap N_1\cap N^\cfrak\eqT \theta_j\cap N_1\eqT\lambda^\bfrak_j$ and $\theta^-_{j+1}\cap N\eqT\lambda^\dfrak_j$.
\end{proof}

We now explain what occurs step by step when intersecting with the chain of models in the previous proof. By \autoref{gksmax} we obtain a ccc poset $\Por$ that forces $\R_i\eqT S_i$ for all $1\leq i\leq 4$. Recall \autoref{tbl0} about the values obtained for $S_i$.

\medskip

\noindent \textbf{Step~1.1.}
Construct a $(\lambda^\dfrak_4,(\theta^-_4)^+,\theta_4)$-directed system $\bar{N}^\dfrak_4:=\la N^\dfrak_{4,\alpha}:\, \alpha<\lambda^\dfrak_4\ra$ such that $N^\dfrak_{4,\alpha}\in N^\dfrak_{4,\alpha+1}$ (using $\theta_4^{\theta^-_4}=\theta_4$). Thus $N^\dfrak_4$ is ${<}\lambda^\dfrak_4$-closed and
$\Por^\dfrak_4:=\Por\cap N^\dfrak_4$ forces $\Rbf_i\eqT S^{4,\dfrak}_i:=S_4\cap N^\dfrak_4$ (by \autoref{fct:restrN}). So the values forced to $\bfrak(\Rbf_i)$ and $\dfrak(\Rbf_i)$ are according to \autoref{tbl1.1}.

\begin{table}[ht]
\centering
\begin{tabular}{c||c|c|c}
     & (regular) & & \\
 $i$ & $\theta\leqT S^{4,\dfrak}_i$ & $\bfrak(S_i^{4,\dfrak})$ & $\dfrak(S_i^{4,\dfrak})$ \\
\hline\hline

$4$ &$\theta_4,\blue{\lambda^\dfrak_4}$ & \blue{$\lambda^\dfrak_4$} & $\cladri{\theta_4}$ \\

\hline

$3$ & $[\theta_3,\cladri{\theta_4}],\blue{\lambda^\dfrak_4}$ & $\blue{\lambda^\dfrak_4}$ & $\cladri{\theta_4}$\\

\hline

$2$ & $[\theta_2,\cladri{\theta_4}],\blue{\lambda^\dfrak_4}$ & $\blue{\lambda^\dfrak_4}$ & $\cladri{\theta_4}$\\

\hline

$1$ & $[\theta_1,\cladri{\theta_4}],\blue{\lambda^\dfrak_4}$ & $\blue{\lambda^\dfrak_4}$ & $\cladri{\theta_4}$\\



\end{tabular}
\caption{Values of the cardinal characteristics of $S^{4,\dfrak}_i$ and Tukey connections from regular cardinals.}\label{tbl1.1}
\end{table}

To prove the values in \autoref{tbl1.1}, by \autoref{fct:ScapN}~(b) note that $\bfrak(S_i^{4,\dfrak})\geq\min\{\bfrak(S_i),\lambda^\dfrak_4\}=\lambda^\dfrak_4$. On the other hand, since $\theta_\infty>\theta_4$, by \autoref{cor1} we obtain $\theta_\infty\cap N^\dfrak_4\eqT\lambda^\dfrak_4$, so $\theta_\infty\leqT S_i$ implies $\lambda^\dfrak_4\eqT \theta_\infty\cap N^\dfrak_4\leqT S^{4,\dfrak}_i$. Therefore, $\bfrak(S^{4,\dfrak}_i)=\lambda^\dfrak_4$.

By \autoref{fct:ScapN}~(d), for any regular $\theta\leq\theta_4=|N^\dfrak_4|$, we obtain $\theta\cap N^\dfrak_4\eqT\theta$, so $\theta\leqT S_i$ implies $\theta\leqT S_i\cap N^\dfrak_4=S^{4,\dfrak}_i$. In particular, for $\theta=\theta_4$, we obtain $\theta_4\leq\dfrak(S^{4,\dfrak}_i)$. The converse inequality holds by \autoref{fct:ScapN}~(a) as $\dfrak(S_i\cap N^\dfrak_4)\leq|\dfrak(S_i)\cap N^\dfrak_4|=\theta_4$.\medskip

\noindent\textbf{Step~1.2.} Construct a $(\lambda^\bfrak_4,\theta^-_4,\theta^-_4)$-directed system $\bar{N}^\bfrak_4:=\la N^\bfrak_{4,\alpha}:\, \alpha<\lambda^\bfrak_4\ra$ such that $N^\bfrak_{4,\alpha}\in N^\bfrak_{4,\alpha+1}$ (using $(\theta^-_4)^{<\theta^-_4}=\theta^-_4$). Thus, $N^\bfrak_4$ is ${<}\lambda^\bfrak_4$-closed and
$\Por^\bfrak_4:=\Por^\dfrak_4\cap N^\bfrak_4$ forces $\Rbf_i\eqT S_i^{4,\bfrak}:= S_i^{4,\dfrak}\cap N^\bfrak_4$. The values of the cardinals of $S_i^{4,\bfrak}$ are displayed in \autoref{tbl1.2}.

\begin{table}[ht]
\centering
\begin{tabular}{c||c|c|c}
     & (regular) & & \\
 $i$ & $\theta\leqT S^{4,\bfrak}_i$ & $\bfrak(S_i^{4,\bfrak})$ & $\dfrak(S_i^{4,\bfrak})$ \\
\hline\hline

$4$ & $\cyan{\lambda^\bfrak_4},\blue{\lambda^\dfrak_4}$ & $\cyan{\lambda^\bfrak_4}$ & $\azul{\lambda^\dfrak_4}$ \\

\hline

$3$ & $[\theta_3,\cladri{\theta_4^-}],\cyan{\lambda^\bfrak_4},\blue{\lambda^\dfrak_4}$ & $\cyan{\lambda^\bfrak_4}$ & $\cladri{\theta_4^-}$\\

\hline

$2$ & $[\theta_2,\cladri{\theta_4^-}],\cyan{\lambda^\bfrak_4},\blue{\lambda^\dfrak_4}$ & $\cyan{\lambda^\bfrak_4}$ & $\cladri{\theta_4^-}$\\

\hline

$1$ & $[\theta_1,\cladri{\theta_4^-}],\cyan{\lambda^\bfrak_4},\blue{\lambda^\dfrak_4}$ & $\cyan{\lambda^\bfrak_4}$ & $\cladri{\theta_4^-}$\\



\end{tabular}
\caption{Values of the cardinal characteristics of $S^{4,\bfrak}_i$ and Tukey connections from regular cardinals.}\label{tbl1.2}
\end{table}

Note that $\bfrak(S_i^{4,\bfrak})\geq\min\{\bfrak(S_i^{4,\dfrak}),\lambda^\bfrak_4\}=\lambda^\bfrak_4$ by \autoref{fct:ScapN}~(b). On the other hand, since $\theta_4>\theta^-_4$, by \autoref{cor1} we obtain $\theta_4\cap N^\bfrak_4\eqT\lambda^\bfrak_4$, so $\theta_4\leqT S^{4,\dfrak}_i$ implies $\lambda^\bfrak_4\eqT \theta_4\cap N^\bfrak_4\leqT S^{4,\bfrak}_i$. Therefore, $\bfrak(S_i^{4,\bfrak})=\lambda^\bfrak_4$.

By \autoref{fct:ScapN}~(d), for any regular $\theta\leq\theta^-_4=|N^\bfrak_4|$, we obtain $\theta\cap N^\bfrak_4\eqT\theta$, so $\theta\leqT S_i^{4,\dfrak}$ implies $\theta\leqT S_i^{4,\dfrak}\cap N^\bfrak_4=S^{4,\bfrak}_i$. In particular, for $\theta=\lambda^\dfrak_4$ we obtain $\lambda^\dfrak_4\leq\dfrak(S^{4,\bfrak}_i)$, and for $i\leq 3$ and
$\theta=\theta^-_4$, we obtain $\theta^-_4\leq\dfrak(S^{4,\bfrak}_i)$. The converse inequality holds by \autoref{fct:ScapN}~(a) as $\dfrak(S_i^{4,\dfrak}\cap N^\bfrak_4)\leq|\dfrak(S_i^{4,\dfrak})\cap N^\bfrak_4|=\theta^-_4$.

It remains to show that $\dfrak(S_4^{4,\dfrak})\leq\lambda^\dfrak_4$. Note that the hypothesis of \autoref{lem:2dir} holds for $\bar{N}^0=\bar N^\dfrak_4$ and $\bar N^1=\bar N^\bfrak_4$, so, since $\bfrak(S_4)=\theta_4>\theta^-_4$, we conclude that $S_4^{4,\bfrak}=S_4\cap N^\dfrak_4\cap N^\bfrak_4\leqT \lambda^\dfrak_4\times\lambda^\bfrak_4$. Therefore $\lambda^\bfrak_4\leq\bfrak(S_4^{4,\bfrak})\leq\dfrak(S_4^{4,\bfrak})\leq\lambda^\dfrak_4$.\medskip

\noindent\textbf{Step~2.1.} Construct a $(\lambda^\dfrak_3,(\theta^-_3)^+,\theta_3)$-directed system $\bar{N}^\dfrak_3:=\la N^\dfrak_{3,\alpha}:\, \alpha<\lambda^\dfrak_3\ra$ such that $N^\dfrak_{3,\alpha}\in N^\dfrak_{3,\alpha+1}$ (using $\theta_3^{\theta^-_3}=\theta_3$). Thus $N^\dfrak_3$ is ${<}\lambda^\dfrak_3$-closed and $\Por^\dfrak_3:=\Por^\bfrak_4\cap N^\dfrak_3$ forces $\Rbf_i\eqT S_i^{3,\dfrak}:= S_i^{4,\bfrak}\cap N^\dfrak_3$. The values of the cardinals of $S_i^{3,\dfrak}$ are displayed in \autoref{tbl2.1}.

\begin{table}[ht]
\centering
\begin{tabular}{c||c|c|c}
     & (regular) & & \\
 $i$ & $\theta\leqT S^{3,\dfrak}_i$ & $\bfrak(S_i^{3,\dfrak})$ & $\dfrak(S_i^{3,\dfrak})$ \\
\hline\hline

$4$ & $\cyan{\lambda^\bfrak_4},\blue{\lambda^\dfrak_4}$ & $\cyan{\lambda^\bfrak_4}$ & $\azul{\lambda^\dfrak_4}$ \\

\hline

$3$ & $\cladri{\theta_3},\cyan{\lambda^\bfrak_4}, \blue{\lambda^\dfrak_4},\colive{\lambda^\dfrak_3}$ & $\cyan{\lambda^\bfrak_4}$ & $\cladri{\theta_3}$\\

\hline

$2$ & $[\theta_2,\cladri{\theta_3}],\cyan{\lambda^\bfrak_4},\blue{\lambda^\dfrak_4}, \colive{\lambda^\dfrak_3}$ & $\cyan{\lambda^\bfrak_4}$ & $\cladri{\theta_3}$\\

\hline

$1$ & $[\theta_1,\cladri{\theta_3}],\cyan{\lambda^\bfrak_4},\blue{\lambda^\dfrak_4}, \colive{\lambda^\dfrak_3}$ & $\cyan{\lambda^\bfrak_4}$ & $\cladri{\theta_3}$\\



\end{tabular}
\caption{Values of the cardinal characteristics of $S^{3,\dfrak}_i$ and Tukey connections from regular cardinals.}\label{tbl2.1}
\end{table}

The values for $1\leq i\leq3$ are calculated similarly to Steps~1.1 and~1.2, so we only explain the values for $i=4$. Since $\dfrak(S^{4,\bfrak}_4)=\lambda^\dfrak_4<\theta_3=|N^\dfrak_3|$, by \autoref{fct:ScapN}~(d) we obtain $S^{3,\dfrak}_4\eqT S^{4,\bfrak}_4$, so the values of the cardinal characteristics stay the same.
\medskip

\noindent\textbf{Step~2.2.} Construct a $(\lambda^\bfrak_3,\theta^-_3,\theta^-_3)$-directed system $\bar{N}^\bfrak_3:=\la N^\bfrak_{3,\alpha}:\, \alpha<\lambda^\bfrak_3\ra$ such that $N^\bfrak_{3,\alpha}\in N^\bfrak_{3,\alpha+1}$ (using $(\theta_3^-)^{<\theta^-_3}=\theta_3^-$). Thus $N^\bfrak_3$ is ${<}\lambda^\bfrak_3$-closed and $\Por^\bfrak_3:=\Por^\dfrak_3\cap N^\bfrak_3$ forces $\Rbf_i\eqT S_i^{3,\bfrak}:= S_i^{3,\dfrak}\cap N^\bfrak_3$. The values of the cardinals of $S_i^{3,\bfrak}$ are displayed in \autoref{tbl2.2}. 

\begin{table}[ht]
\centering
\begin{tabular}{c||c|c|c}
     & (regular) & & \\
 $i$ & $\theta\leqT S_i^{3,\bfrak}$ & $\bfrak(S_i^{3,\bfrak})$ & $\dfrak(S_i^{3,\bfrak})$ \\
\hline\hline

$4$ & $\cyan{\lambda^\bfrak_4},\blue{\lambda^\dfrak_4}$ & $\cyan{\lambda^\bfrak_4}$ & $\azul{\lambda^\dfrak_4}$ \\

\hline

$3$ & $\green{\lambda^\bfrak_3}, \cyan{\lambda^\bfrak_4}, \blue{\lambda^\dfrak_4},\colive{\lambda^\dfrak_3}$ & $\green{\lambda^\bfrak_3}$ & $\colive{\lambda^\dfrak_3}$\\

\hline

$2$ & $[\theta_2,\cladri{\theta_3^-}],\green{\lambda^\bfrak_3}, \cyan{\lambda^\bfrak_4},\blue{\lambda^\dfrak_4}, \colive{\lambda^\dfrak_3}$ & $\green{\lambda^\bfrak_3}$ & $\cladri{\theta_3^-}$\\

\hline

$1$ & $[\theta_1,\cladri{\theta_3^-}],\green{\lambda^\bfrak_3}, \cyan{\lambda^\bfrak_4},\blue{\lambda^\dfrak_4}, \colive{\lambda^\dfrak_3}$ & $\green{\lambda^\bfrak_3}$ & $\cladri{\theta_3^-}$\\



\end{tabular}
\caption{Values of the cardinal characteristics of $S^{3,\bfrak}_i$ and Tukey connections from regular cardinals.}\label{tbl2.2}
\end{table}

This is similar to Steps~1.2 and~2.1, but $\dfrak(S_3^{3,\bfrak})\leq\lambda^\dfrak_3$ needs more details. As in the proof of \autoref{lem:2dir}, $N^\dfrak_4\cap N^\bfrak_4\cap N^\dfrak_3$ is obtained by a $(\lambda^\dfrak_3\times\lambda^\bfrak_4\times\lambda^\dfrak_4,(\theta^-_3)^+,\theta_3)$-directed system $\bar{N}'_3$. So we apply \autoref{lem:2dir} to $\bar N^0=\bar{N}'_3$ and $\bar N^1=\bar N^\bfrak_3$ to obtain $S^{3,\bfrak}_3\leqT \prod_{j=3}^4\lambda^\dfrak_j\times\lambda^\bfrak_j$.\medskip

We proceed in the same fashion for the remaining steps.\medskip

\noindent\textbf{Step~3.1} Construct a $(\lambda^\dfrak_2,(\theta^-_2)^+,\theta_2)$-directed system $\bar{N}^\dfrak_2:=\la N^\dfrak_{2,\alpha}:\, \alpha<\lambda^\dfrak_2\ra$ such that $N^\dfrak_{2,\alpha}\in N^\dfrak_{2,\alpha+1}$ (using $\theta_2^{\theta^-_2}=\theta_2$). Thus $N^\dfrak_2$ is ${<}\lambda^\dfrak_2$-closed and $\Por^\dfrak_2:=\Por^\bfrak_3\cap N^\dfrak_2$ forces $\Rbf_i\eqT S_i^{2,\dfrak}:= S_i^{3,\bfrak}\cap N^\dfrak_2$. The values of the cardinals of $S_i^{2,\dfrak}$ are displayed in \autoref{tbl3.1}.

\begin{table}[ht]
\centering
\begin{tabular}{c||c|c|c}
     & (regular) & & \\
 $i$ & $\theta\leqT S_i^{2,\dfrak}$ & $\bfrak(S_i^{2,\dfrak})$ & $\dfrak(S_i^{2,\dfrak})$ \\
\hline\hline

$4$ & $\cyan{\lambda^\bfrak_4},\blue{\lambda^\dfrak_4}$ & $\cyan{\lambda^\bfrak_4}$ & $\azul{\lambda^\dfrak_4}$ \\

\hline

$3$ & $\green{\lambda^\bfrak_3}, \cyan{\lambda^\bfrak_4}, \blue{\lambda^\dfrak_4},\colive{\lambda^\dfrak_3}$ & $\green{\lambda^\bfrak_3}$ & $\colive{\lambda^\dfrak_3}$\\

\hline

$2$ & $\cladri{\theta_2},\green{\lambda^\bfrak_3}, \cyan{\lambda^\bfrak_4},\blue{\lambda^\dfrak_4}, \colive{\lambda^\dfrak_3}, \ccafe{\lambda^\dfrak_2}$ & $\green{\lambda^\bfrak_3}$ & $\cladri{\theta_2}$\\

\hline

$1$ & $[\theta_1,\cladri{\theta_2}],\green{\lambda^\bfrak_3}, \cyan{\lambda^\bfrak_4},\blue{\lambda^\dfrak_4}, \colive{\lambda^\dfrak_3}, \ccafe{\lambda^\dfrak_2}$ & $\green{\lambda^\bfrak_3}$ & $\cladri{\theta_2}$\\



\end{tabular}
\caption{Values of the cardinal characteristics of $S^{2,\dfrak}_i$ and Tukey connections from regular cardinals.}\label{tbl3.1}
\end{table}
\medskip

\noindent\textbf{Step~3.2.} Construct a $(\lambda^\bfrak_2,\theta^-_2,\theta^-_2)$-directed system $\bar{N}^\bfrak_2:=\la N^\bfrak_{2,\alpha}:\, \alpha<\lambda^\bfrak_2\ra$ such that $N^\bfrak_{2,\alpha}\in N^\bfrak_{2,\alpha+1}$ (using $(\theta_2^-)^{<\theta^-_2}=\theta_2^-$). Thus $N^\bfrak_2$ is ${<}\lambda^\bfrak_2$-closed and $\Por^\bfrak_2:=\Por^\dfrak_2\cap N^\bfrak_2$ forces $\Rbf_i\eqT S_i^{2,\bfrak}:= S_i^{2,\dfrak}\cap N^\bfrak_2$. The values of the cardinals of $S_i^{2,\bfrak}$ are displayed in \autoref{tbl3.2}, in particular, we obtain $S_2^{2,\bfrak}\leqT\prod_{j=2}^4 \lambda^\dfrak_j\times\lambda^\bfrak_j$.

\begin{table}[ht]
\centering
\begin{tabular}{c||c|c|c}
     & (regular) & & \\
 $i$ & $\theta\leqT \Rbf_i$ & $\bfrak(S_i^{2,\bfrak})$ & $\dfrak(S_i^{2,\bfrak})$ \\
\hline\hline

$4$ & $\cyan{\lambda^\bfrak_4},\blue{\lambda^\dfrak_4}$ & $\cyan{\lambda^\bfrak_4}$ & $\azul{\lambda^\dfrak_4}$ \\

\hline

$3$ & $\green{\lambda^\bfrak_3}, \cyan{\lambda^\bfrak_4}, \blue{\lambda^\dfrak_4},\colive{\lambda^\dfrak_3}$ & $\green{\lambda^\bfrak_3}$ & $\colive{\lambda^\dfrak_3}$\\

\hline

$2$ & $\ccanela{\lambda^\bfrak_2},\green{\lambda^\bfrak_3}, \cyan{\lambda^\bfrak_4},\blue{\lambda^\dfrak_4}, \colive{\lambda^\dfrak_3}, \ccafe{\lambda^\dfrak_2}$ & $\ccanela{\lambda^\bfrak_2}$ & $\ccafe{\lambda^\dfrak_2}$\\

\hline

$1$ & $[\theta_1,\cladri{\theta^-_2}],\ccanela{\lambda^\bfrak_2},\green{\lambda^\bfrak_3}, \cyan{\lambda^\bfrak_4},\blue{\lambda^\dfrak_4}, \colive{\lambda^\dfrak_3}, \ccafe{\lambda^\dfrak_2}$ & $\ccanela{\lambda^\bfrak_2}$ & $\cladri{\theta^-_2}$\\



\end{tabular}
\caption{Values of the cardinal characteristics of $S^{2,\bfrak}_i$ and Tukey connections from regular cardinals.}\label{tbl3.2}
\end{table}
\medskip

\noindent\textbf{Step~4.1.} Construct a $(\lambda^\dfrak_1,(\theta^-_1)^+,\theta_1)$-directed system $\bar{N}^\dfrak_1:=\la N^\dfrak_{1,\alpha}:\, \alpha<\lambda^\dfrak_1\ra$ such that $N^\dfrak_{1,\alpha}\in N^\dfrak_{1,\alpha+1}$ (using $\theta_1^{\theta^-_1}=\theta_1$). Thus $N^\dfrak_1$ is ${<}\lambda^\dfrak_1$-closed and $\Por^\dfrak_1:=\Por^\bfrak_2\cap N^\dfrak_1$ forces $\Rbf_i\eqT S_i^{1,\dfrak}:= S_i^{2,\bfrak}\cap N^\dfrak_1$. The values of the cardinals of $S_i^{1,\dfrak}$ are displayed in \autoref{tbl4.1}.

\begin{table}[ht]
\centering
\begin{tabular}{c||c|c|c}
     & (regular) & & \\
 $i$ & $\theta\leqT \Rbf_i$ & $\bfrak(S_i^{1,\dfrak})$ & $\dfrak(S_i^{1,\dfrak})$ \\
\hline\hline

$4$ & $\cyan{\lambda^\bfrak_4},\blue{\lambda^\dfrak_4}$ & $\cyan{\lambda^\bfrak_4}$ & $\azul{\lambda^\dfrak_4}$ \\

\hline

$3$ & $\green{\lambda^\bfrak_3}, \cyan{\lambda^\bfrak_4}, \blue{\lambda^\dfrak_4},\colive{\lambda^\dfrak_3}$ & $\green{\lambda^\bfrak_3}$ & $\colive{\lambda^\dfrak_3}$\\

\hline

$2$ & $\ccanela{\lambda^\bfrak_2},\green{\lambda^\bfrak_3}, \cyan{\lambda^\bfrak_4},\blue{\lambda^\dfrak_4}, \colive{\lambda^\dfrak_3}, \ccafe{\lambda^\dfrak_2}$ & $\ccanela{\lambda^\bfrak_2}$ & $\ccafe{\lambda^\dfrak_2}$\\

\hline

$1$ & $\cladri{\theta_1},\ccanela{\lambda^\bfrak_2},\green{\lambda^\bfrak_3}, \cyan{\lambda^\bfrak_4},\blue{\lambda^\dfrak_4}, \colive{\lambda^\dfrak_3}, \ccafe{\lambda^\dfrak_2},\corange{\lambda_1^\dfrak}$ & $\ccanela{\lambda^\bfrak_2}$ & $\cladri{\theta_1}$\\



\end{tabular}
\caption{Values of the cardinal characteristics of $S^{1,\dfrak}_i$ and Tukey connections from regular cardinals.}\label{tbl4.1}
\end{table}
\medskip

\noindent\textbf{Step~4.2} Construct a $(\lambda^\bfrak_1,\theta^-_1,\theta^-_1)$-directed system $\bar{N}^\bfrak_1:=\la N^\bfrak_{1,\alpha}:\, \alpha<\lambda^\bfrak_1\ra$ such that $N^\bfrak_{1,\alpha}\in N^\bfrak_{1,\alpha+1}$ (using $(\theta_1^-)^{<\theta^-_1}=\theta_1^-$). Thus $N^\bfrak_1$ is ${<}\lambda^\bfrak_1$-closed and $\Por^\bfrak_1:=\Por^\dfrak_1\cap N^\bfrak_1$ forces $\Rbf_i\eqT S_i^{1,\bfrak}:= S_i^{1,\dfrak}\cap N^\bfrak_1$. The values of the cardinals of $S_i^{1,\bfrak}$ are displayed in \autoref{tbl4.2}, in particular, we obtain $S_1^{1,\bfrak}\leqT\prod_{j=1}^4 \lambda^\dfrak_j\times\lambda^\bfrak_j$.

\begin{table}[ht]
\centering
\begin{tabular}{c||c|c|c}
     & (regular) & & \\
 $i$ & $\theta\leqT \Rbf_i$ & $\bfrak(S_i^{1,\bfrak})$ & $\dfrak(S_i^{1,\bfrak})$ \\
\hline\hline

$4$ & $\cyan{\lambda^\bfrak_4},\blue{\lambda^\dfrak_4}$ & $\cyan{\lambda^\bfrak_4}$ & $\azul{\lambda^\dfrak_4}$ \\

\hline

$3$ & $\green{\lambda^\bfrak_3}, \cyan{\lambda^\bfrak_4}, \blue{\lambda^\dfrak_4},\colive{\lambda^\dfrak_3}$ & $\green{\lambda^\bfrak_3}$ & $\colive{\lambda^\dfrak_3}$\\

\hline

$2$ & $\ccanela{\lambda^\bfrak_2},\green{\lambda^\bfrak_3}, \cyan{\lambda^\bfrak_4},\blue{\lambda^\dfrak_4}, \colive{\lambda^\dfrak_3}, \ccafe{\lambda^\dfrak_2}$ & $\ccanela{\lambda^\bfrak_2}$ & $\ccafe{\lambda^\dfrak_2}$\\

\hline

$1$ & $\cmelon{\lambda^\bfrak_1},\ccanela{\lambda^\bfrak_2},\green{\lambda^\bfrak_3}, \cyan{\lambda^\bfrak_4},\blue{\lambda^\dfrak_4}, \colive{\lambda^\dfrak_3}, \ccafe{\lambda^\dfrak_2},\corange{\lambda_1^\dfrak}$ & $\cmelon{\lambda^\bfrak_1}$ & $\corange{\lambda_1^\dfrak}$\\



\end{tabular}
\caption{Values of the cardinal characteristics of $S^{1,\bfrak}_i$ and Tukey connections from regular cardinals.}\label{tbl4.2}
\end{table}
\medskip

\noindent\textbf{Final step.} Let $N^\cfrak\preceq H_\chi$ be $\sigma$-closed such that $|N^\cfrak|=\lambda^\cfrak\subseteq N^\cfrak$ (using $(\lambda^\cfrak)^{\aleph_0}=\lambda^\cfrak$). Then $\Qor:=\Por^\bfrak_1\cap N^\cfrak$ forces $\Rbf_i\eqT S_i^{\cfrak}:= S_i^{1,\bfrak}\cap N^\cfrak$ and $\cfrak=\lambda^\cfrak$. By \autoref{fct:ScapN}~(d), the values in \autoref{tbl4.2} are still valid for $S_i^\cfrak$. Then $N:=N^\cfrak\cap\bigcap_{j=1}^4N^\dfrak_j\cap N^\bfrak_j$ is as desired, and $\Qor=\Por\cap N$.

\section{Discussion}

In our Cicho\'n's maximum result we get Tukey connections with products of ordinals, but it is unclear whether we actually have Tukey equivalence.

\begin{question}
Can we force Tukey equivalence in \autoref{thmcmax}~(a)?
\end{question}

Similarly, in \autoref{leftBCM}~(b), it is unclear whether we can force $\Rbf_4\eqT\lambda_5\times\lambda_4$.

Recall that, in \autoref{Cohenlimit}, we showed that the method of FS iterations restrict us to constellations of Cicho\'n's diagram where $\non(\Mwf)\leq\cov(\Mwf)$. There are four instances of Cicho\'n's maximum under this condition: the one proved in \autoref{thmcmax}, the one in \autoref{fig:cichon2} (after applying the same arguments in \autoref{sec:cmax} to the forcing from \autoref{leftKST}), and the two addressed in the following open question.

\begin{figure}[ht]
    \centering
    \includegraphics{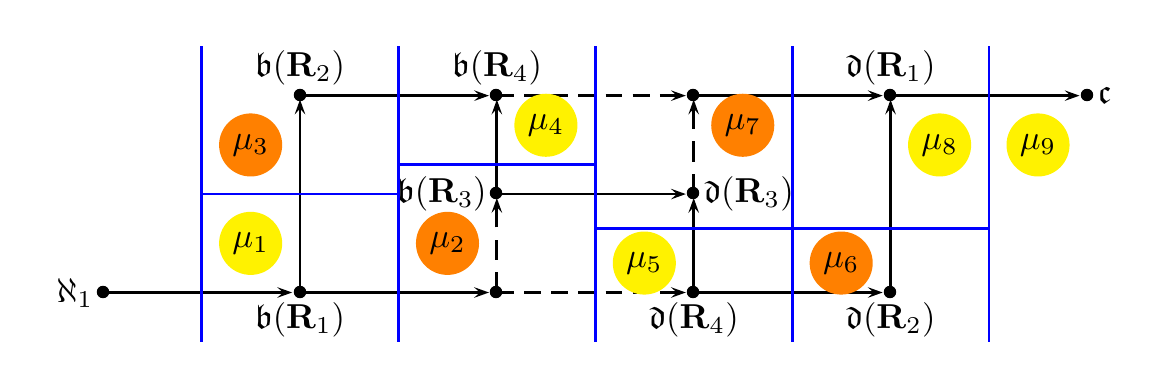}
    \caption{Another instance of Cicho\'n's maximum proved consistent with $\thzfc$. Here $\mu_i<\mu_j$ whenever $i<j$.}
    \label{fig:cichon2}
\end{figure}

\begin{question}
When $\mu_i<\mu_j$ for $i<j$, are the constellations of \autoref{fig:cichonQ} consistent with $\thzfc$?
\end{question}

\begin{figure}[ht]
    \centering
    \includegraphics{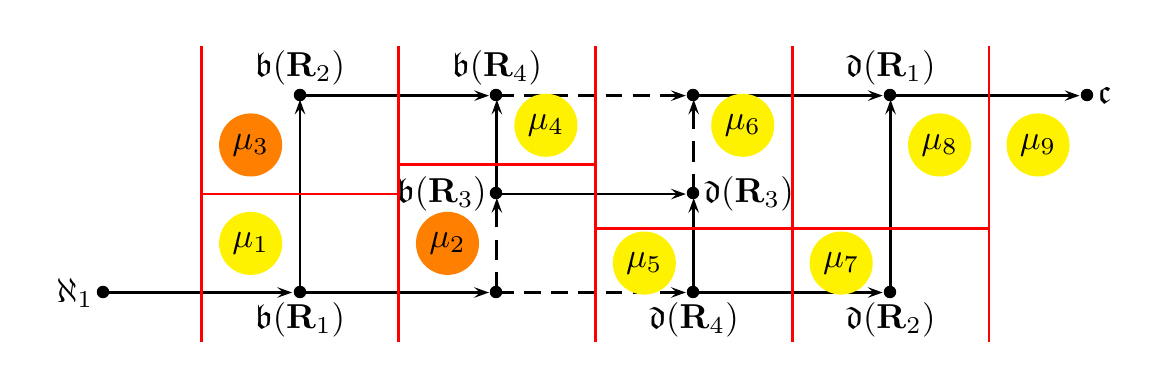}
    
    \includegraphics{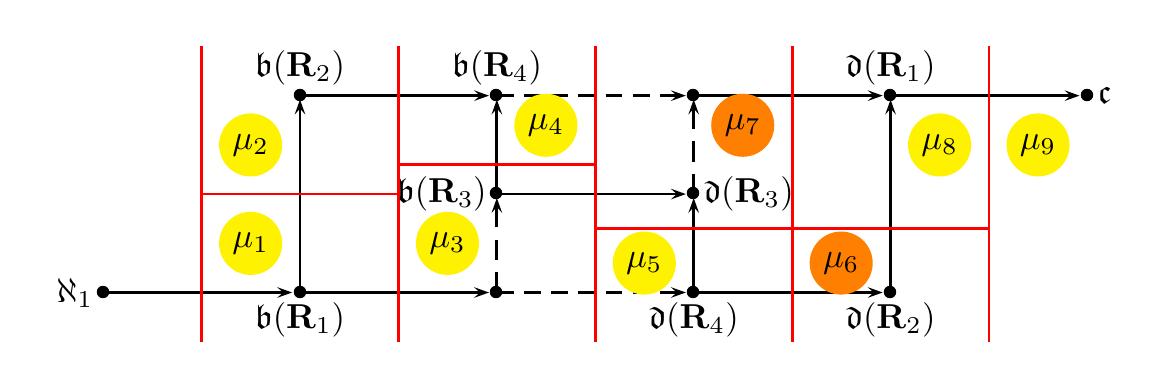}
    \caption{Instances of Cicho\'n's maximum in the context of $\non(\Mwf)\leq\cov(\Mwf)$ that have not been proved consistent with $\thzfc$.}
    \label{fig:cichonQ}
\end{figure}

On the other hand, no instance of Cicho\'n's maximum with $\cov(\Mwf)<\non(\Mwf)$ has been proved consistent so far.

We finish with some remarks about forcing singular values in Cicho\'n's diagram. In the models presented in this paper only $\cfrak$ can be singular, but there are some models with two singular values~\cite{Mvert}. There are also some instances of Cicho\'n's maximum with two singular values, but their consistency use large cardinals~\cite{E87}. The latter reference also presents interesting constellations in the random model.

Recently, Goldstern, Kellner, Shelah and the second author proved, using large cardinals, the consistency of Cicho\'n's maximum with the five cardinals on the right side possibly singular. Concretely, with the notation of \autoref{thmcmax}, it is forced $\Rbf_i\eqT[\lambda^\dfrak_i]^{<\lambda^\bfrak_i}$ for $1\leq i\leq 4$ by allowing $\lambda^\dfrak_i$ to be singular. However, it is still unknown how to adapt the methods of \autoref{sec:restrmod} and~\ref{sec:cmax} to prove this result (without using large cardinals).

\subsection*{Acknowledgments}
This paper was developed for the conference proceedings corresponding to the second virtual RIMS Set Theory Workshop ``Recent Developments in Set Theory of the Reals" that Professor Masaru Kada organized in October~2021. The authors are very thankful to Professor Kada for letting them participate in such wonderful workshop, and the second author is grateful for his invitation to talk about the methods presented in the second part of this paper.

The first author is supported by the Austrian Science Fund (FWF) P30666 and the DOC Fellowship of the Austrian Academy of Sciences at the Institute of Discrete Mathematics and Geometry, TU Wien;
the second author is supported by the Grant-in-Aid for Early Career Scientists 18K13448, Japan Society for the Promotion of Science.

This work is also supported by the Research Institute for Mathematical Sciences, an International Joint Usage/Research Center located in Kyoto University.

{\small
\bibliography{left}
\bibliographystyle{alpha}
}

\Addresses

\end{document}